\newtheorem{tm}{Theorem}[section]
\newtheorem{lm}[tm]{Lemma}
\newtheorem{re}[tm]{Remark}
\newtheorem{df}[tm]{Definition}
\newtheorem{pr}[tm]{Proposition}
\newtheorem{ex}[tm]{Example}
\newcommand{\subscripts}[3]{%
  \@mathmeasure\z@\displaystyle{#2}%
  \global\setbox\@ne\vbox to\ht\z@{}\dp\@ne\dp\z@
  \setbox\tw@\box\@ne
  \@mathmeasure4\displaystyle{\copy\tw@_{#1}}%
  \@mathmeasure6\displaystyle{{#2}_{#3}}%
  \dimen@-\wd6 \advance\dimen@\wd4 \advance\dimen@\wd\z@
  \hbox to\dimen@{}\mathop{\kern-\dimen@\box4\box6}%
}
\newcommand{\A}{\mathcal{A}}
\newcommand{\R}{\mathbb{R}}
\newcommand{\N}{\mathbb{N}}
\newcommand{\dd}{\mathrm{d}}
\newcommand{\e}{\mathrm{e}}
\newcommand{\x}{\bm{\mathrm{x}}}
\newcommand{\y}{\bm{\mathrm{y}}}
\newcommand{\ve}{\varepsilon}
\newcommand{\dis}{\displaystyle}
\newcommand{\del}{\partial}
\newcommand{\ol}{\overline}
\newcommand{\nn}{\nonumber}
\newcommand{\E}{\mathbb{E}}
\newcommand{\la}{\langle}
\newcommand{\ra}{\rangle}
\newcommand{\bx}{\mathbf{x}}
\newcommand{\by}{\mathbf{y}}
\newcommand{\bk}{\mathbf{k}}
\newcommand{\bi}{\mathbf{i}}
\newcommand{\bj}{\mathbf{j}}
\newcommand{\Dom}{\mathrm{Dom}}
\newcommand{\PP}{\mathbf{P}}
\begin{document}
\title[Iterates of multidimensional Bernstein-type operators]
{
{ Iterates of multidimensional Bernstein-type operators
and diffusion processes in population genetics}
}
\author[T. Hirano]{Takatoshi Hirano}
\author[R. Namba]{Ryuya Namba}

\address[T. Hirano]{Department of Risk Management Technology, Mizuho-DL Financial Technology Co., Ltd., 2-4-1, 
Koji-machi, Chiyoda-ku, 
Tokyo, 102-0083, Japan}
\email{{\tt{takatoshi-hirano@fintec.co.jp}}}

\address[R. Namba]{Department of Mathematics,
Faculty of Science,
Kyoto Sangyo University, Motoyama, Kamigamo, Kita-ku, Kyoto, 603-8555, Japan}
\email{{\tt{rnamba@cc.kyoto-su.ac.jp}}}

\subjclass[2020]{Primary 60J60; Secondary 41A36, 60J70, 60G53, 60F05.}
\keywords{Bernstein operator; 
population genetics; Trotter's approximation theorem;
Wright--Fisher diffusion; Fleming--Viot process.}

\maketitle 

\begin{abstract}
The Bernstein operator is known as a
typical example of positive linear operators which 
uniformly approximates continuous functions on $[0,1]$. 
In the present paper, we introduce a multidimensional 
extension of the Bernstein operator which is associated with
a transition probability of a certain discrete Markov chain.  
In particular, 
we show that the iterate of the multidimensional Bernstein-type  
operator 
uniformly converges to the Feller semigroup corresponding to 
the multidimensional Wright--Fisher diffusion process with mutation
arising in the study of population genetics, together with 
its rate of convergence. 
The convergence of process-level is obtained as well. 
Moreover, by taking the limit as both the number of iterate
and the dimension of the Bernstein-type operator 
tend to infinity simultaneously, 
we prove that the iterate of the multidimensional Bernstein-type 
operator uniformly converges to the Feller semigroup 
corresponding to a probability measure-valued Fleming--Viot process
with mutation. 

\end{abstract}


\section{{\bf Introduction}}
\label{Sect:Intro}

\subsection{Background and motivation}

    The study of positive linear operators, 
    acting on some function spaces, is
    one of central themes in approximation theory, 
    functional analysis, probability theory
    and so on. 
    In particular, the property that such operators uniformly 
    approximate continuous functions
    has been investigated extensively. 
    Let $C([0, 1])$ be the space of continuous functions
    $f : [0, 1] \to \R$. 
    One of well-known positive linear operators possessing 
    the approximating property 
    is the Bernstein operator, 
    which is defined as follows:
    
    \begin{df}[cf.~\cite{Bernstein}]
    \label{Def:operators-1D}
       For $n \in \N$, a positive linear operator 
       $B_n$ acting on $C([0, 1])$ is defined by 
       \begin{equation}\label{Eq:Bernstein}
       B_n f(x) 
        = \sum^{n}_{k=0}\binom{n}{k}x^k (1-x)^{n-k} f\left(\frac{k}{n}\right),
        \qquad f \in C([0,1]), \, x \in [0,1].
       \end{equation}
       The operator $B_n$ is called the Bernstein operator. 
    \end{df}
    
    Indeed, the Bernstein  operator $B_n$ has the following approximating property.

    \begin{pr}[cf.~\cite{Bernstein}]
    \label{Prop:approximation}
    For any $f \in C([0, 1])$, we have
    \[
    \lim_{n \to \infty}\max_{x \in [0, 1]}|B_nf(x)-f(x)|=0. 
    \]
    \end{pr}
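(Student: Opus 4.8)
The plan is to use the probabilistic interpretation of $B_nf(x)$ as an expectation and to make the weak law of large numbers quantitative via Chebyshev's inequality. First I would fix $x\in[0,1]$ and observe that $B_nf(x)=\E[f(S_n/n)]$, where $S_n=\sum_{j=1}^{n}X_j$ is a sum of i.i.d.\ Bernoulli random variables with mean $x$, so that $S_n$ is binomially distributed with parameters $n$ and $x$. Since the binomial probabilities sum to one, $B_n\mathbf{1}=\mathbf{1}$, and hence $B_nf(x)-f(x)=\E[f(S_n/n)-f(x)]$, whence
\[
|B_nf(x)-f(x)|\le \E\!\left[\left|f\!\left(\tfrac{S_n}{n}\right)-f(x)\right|\right].
\]

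Next, because $[0,1]$ is compact, $f$ is bounded, say $M:=\max_{[0,1]}|f|$, and uniformly continuous: given $\ve>0$ choose $\delta>0$ such that $|f(u)-f(v)|<\ve$ whenever $|u-v|<\delta$. Splitting the expectation over the event $\{|S_n/n-x|<\delta\}$ and its complement gives
\[
|B_nf(x)-f(x)|\le \ve + 2M\,\PP\!\left(\left|\tfrac{S_n}{n}-x\right|\ge \delta\right).
\]
Then I would apply Chebyshev's inequality together with $\Var(S_n/n)=x(1-x)/n\le 1/(4n)$ to bound the probability by $1/(4n\delta^2)$, a quantity independent of $x$. This yields $\max_{x\in[0,1]}|B_nf(x)-f(x)|\le \ve + M/(2n\delta^2)$; letting $n\to\infty$ and then $\ve\downarrow 0$ completes the argument.

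There is no genuine obstacle here --- this is Bernstein's classical proof --- but the one point requiring care is that the error estimate be \emph{uniform} in $x$, which is precisely what the elementary bound $x(1-x)\le 1/4$ delivers. As an alternative that avoids probability altogether, one may compute directly from \eqref{Eq:Bernstein} that $B_nf_0(x)=1$, $B_nf_1(x)=x$, and $B_nf_2(x)=x^2+x(1-x)/n$ for the monomials $f_0\equiv 1$, $f_1(t)=t$, $f_2(t)=t^2$, deduce $B_n\big((t-x)^2\big)(x)=x(1-x)/n$, and then run a Korovkin-type estimate controlling $|B_nf(x)-f(x)|$ by the modulus of continuity of $f$ plus a term proportional to $x(1-x)/(n\delta^2)$; this reproduces the same uniform bound.
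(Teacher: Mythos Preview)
Your proof is correct. The paper does not actually prove Proposition~\ref{Prop:approximation}; it merely states it and remarks that it follows ``as an application of the weak law of large numbers for binomial random variables,'' citing \cite[Example~5.15]{Klenke}. Your argument is precisely this classical probabilistic proof, and your care about uniformity in $x$ via the bound $x(1-x)\le 1/4$ is exactly the point that makes the convergence uniform.
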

    \noindent
    It is well known that Proposition \ref{Prop:approximation}  provides 
    a constructive proof of 
    the celebrated {\it Weierstrass approximation theorem}. 
    In fact, we can show it as an application of 
    the weak law of large numbers for binomial random variables. 
    See e.g.,  \cite[Example 5.15]{Klenke}.

    Let us now consider the $k$ times iteration $B_n^k$ of the Bernstein operator 
    $B_n$ itself. Kelisky and Rivlin  first focused 
    on the limiting behavior 
    of $B_n^k$ as $k \to \infty$ with fixed $n$ 
    and obtained in \cite[Theorem 1]{KR67} that 
    $B_n^k f, \, f \in C([0, 1]),$
    uniformly converges to the linear function 
    which interpolates $f(0)$ and $f(1)$. 
    After their result, several limiting behaviors 
    of $B_n^k f$ as both $k$ and $n$
    tend to infinity under some relations 
    between $k$ and $n$ have been discussed. 
    For instance, Karlin and Ziegler investigated 
    long time behaviors of the $k(n)$-times iterates of positive linear 
    operator as $n \to \infty $ and the limiting semigroup is explicitly identified
    for various classes of positive linear operators in \cite{KZ}.
    Konstantopoulos, Yuan and Zazanis
    showed in \cite[Theorem 3]{KYZ18} that 
    $B_n^{\lfloor nt \rfloor}f$, $f \in C([0, 1])$,
    uniformly converges to the diffusion process which solves 
    the stochastic differential equation
    \begin{equation}\label{Eq:onedim-Wright-Fisher}
            \dd {\sf X}_t(x)=\sqrt{{\sf X}_t(x)(1-{\sf X}_t(x))} \, \dd W_t, 
            \qquad {\sf X}_0(x)=x \in [0, 1],
        \end{equation}
        where $ (W_t)_{t \ge 0} $ is a one-dimensional 
        standard Brownian motion. 
    The diffusion process $ \big({\sf X}_t(x)\big)_{t \ge 0} $ is called 
    the {\it Wright--Fisher diffusion}, 
    which appears as a fundamental model for the evolution of the 
    allele frequency in population genetics. See e.g., 
    \cite{EK} for more details. 
    In the proof of \cite[Theorem 3]{KYZ18}, 
    the authors applied a fully stochastic approach based on 
    It\^o calculus, which provides a probabilistic interpretation
    of the limiting behavior of the iterates of $B_n$. 
    
    One of remarkable properties of the Bernstein operator $B_n$ is  
    its probabilistic expression in terms of the expectation of a 
    binomial random variable. 
    Indeed, suppose that $X_i(x), \, i=1, 2, 3, \dots,$ are independent and
    identically distributed Bernoulli random variables 
    with $\mathbb{P}(X_1(x)=1)=x$ and $\mathbb{P}(X_1(x)=0)=1-x$ 
    for $x \in [0, 1]$.
    Then, the random variable  
    $S_n(x):=X_1(x)+X_2(x)+\cdots+X_n(x)$ is binomial 
    and we can see that 
    \begin{equation}\label{Bernstein-expectation}
    B_nf(x)=\mathbb{E}\left[ f\left(\frac{1}{n}S_n(x)\right) \right], 
    \qquad \, f \in C([0, 1]), \, x \in [0, 1]. 
    \end{equation}
    This expression highly plays a crucial role 
    in obtaining such a limiting behavior
    from a probabilistic perspective. 
    Therefore, it is quite natural to 
    consider the multidimensional versions 
    of the Bernstein operator by replacing the binomial random variable $S_n(x)$ with the 
    multinomial one,
    which motivates some further investigations of interest.

\subsection{Aim and overview of main results}

    Stimulated by the circumstance mentioned above, 
    we aim to find out multidimensional versions of 
    limit theorems for iterates of the
    multidimensional Bernstein operator 
    and to reveal various stochastic phenomena behind the iterates.
    By the aid of the multinomial distribution, 
    we can extend \eqref{Eq:Bernstein} 
    to the multidimensional cases. 

    We put $\N_0:=\N \cup\{0\}$. 
    Let $d \ge 2$ be an integer. 
    We define 
    the $(d-1)$-simplex in $\R^d$ by
    \[
    \Delta_{d-1}:=\left\{
        \bx = (x_1, x_2, \dots, x_d) \in \R^d \,
        | \,  x_i \ge 0, i=1, 2, \dots, d, \, |\bx| = 1 
        \right\}.
    \]
    By the symbol $C(\Delta_{d-1})$, we mean 
    the Banach space of all continuous functions $f : \Delta_{d-1} \to \R$,
    which we endow with the usual supremum norm 
    \[
    \| f\|_\infty:=\max_{\bx \in \Delta_{d-1}}|f(\bx)|,
    \qquad f \in C(\Delta_{d-1}).
    \]
    
    \begin{df}[cf.~\cite{Dinghas}]
    \label{Def:operators-multiD}
       For $n \in \N$, $f \in C(\Delta_{d-1})$ and $\bx \in \Delta_{d}$,
       the positive linear operator 
       $B_{d,n}$ acting on $C(\Delta_{d-1})$ is defined by 
       \begin{equation}\label{Eq:d-Bernstein}
       B_{d, n} f(\bx) 
        = \sum_{\bk \in \N_0^d, |\bk| = n} 
            \frac{n!}{k_1!k_2!\cdots k_d!}
            x_1^{k_1}x_2^{k_2}\cdots x_d^{k_d}
            f \left(\frac{\bk}{n}\right).
       \end{equation}
       The operator $B_{d, n}$ is called the $d$-dimensional Bernstein operator. 
    \end{df}

To our best knowledge, 
the operator $B_{d, n}$ was first introduced in 
\cite{Dinghas}.  
The approximating property of this
operator is also given as in the following.

\begin{pr}[cf.~{\cite[Thoerem 4.9]{Altomare}}]
    \label{prop:approximation-d}
    For any $f \in C(\Delta_{d-1})$, we have
    \[
    \lim_{n \to \infty}\|B_{d,n}f-f\|_\infty=0. 
    \]
    
\end{pr}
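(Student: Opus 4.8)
The plan is to imitate the one-dimensional argument recalled around \eqref{Bernstein-expectation}, replacing the binomial random variable by a multinomial one, and then to promote pointwise convergence to uniform convergence by a Chebyshev-type estimate together with the uniform continuity of $f$ on the compact set $\Delta_{d-1}$. First I would record the probabilistic representation of $B_{d,n}$. Fix $\bx=(x_1,\dots,x_d)\in\Delta_{d-1}$ and let $X_1(\bx),X_2(\bx),\dots$ be i.i.d.\ $\R^d$-valued random vectors with $\mathbb{P}(X_1(\bx)=e_j)=x_j$ for $j=1,\dots,d$, where $e_1,\dots,e_d$ denotes the standard basis of $\R^d$. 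Then $S_n(\bx):=X_1(\bx)+\cdots+X_n(\bx)$ has the multinomial distribution with parameters $n$ and $(x_1,\dots,x_d)$, so that $\mathbb{P}(S_n(\bx)=\bk)=\frac{n!}{k_1!\cdots k_d!}x_1^{k_1}\cdots x_d^{k_d}$ for $\bk\in\N_0^d$ with $|\bk|=n$. Since each $X_i(\bx)$ is a standard basis vector, $S_n(\bx)/n\in\Delta_{d-1}$ almost surely, and comparing with \eqref{Eq:d-Bernstein} gives
\[
B_{d,n}f(\bx)=\mathbb{E}\Big[f\Big(\tfrac{1}{n}S_n(\bx)\Big)\Big],\qquad f\in C(\Delta_{d-1}),\ \bx\in\Delta_{d-1}.
\]

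Next I would establish a moment estimate uniform in $\bx$. From $\mathbb{E}[(S_n(\bx))_i]=nx_i$ and $\Var((S_n(\bx))_i)=nx_i(1-x_i)$ one gets
\[
\mathbb{E}\Big[\big|\tfrac{1}{n}S_n(\bx)-\bx\big|^2\Big]=\frac1n\sum_{i=1}^{d}x_i(1-x_i)=\frac1n\Big(1-\sum_{i=1}^{d}x_i^2\Big)\le\frac1n,
\]
because $\sum_i x_i^2\ge(\sum_i x_i)^2/d=1/d>0$. Hence Chebyshev's inequality yields $\mathbb{P}\big(|\tfrac1nS_n(\bx)-\bx|\ge\delta\big)\le\frac{1}{n\delta^2}$ for every $\delta>0$, with a bound independent of $\bx$; this is the quantitative form of the weak law of large numbers for the multinomial vector.

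Finally I would combine these with the uniform continuity of $f$. Since $\Delta_{d-1}$ is compact, $f$ is uniformly continuous, so its modulus of continuity $\omega_f(\delta):=\sup\{|f(\bx)-f(\by)|:\bx,\by\in\Delta_{d-1},\,|\bx-\by|\le\delta\}$ satisfies $\omega_f(\delta)\to0$ as $\delta\downarrow0$. Splitting the expectation according to whether $|\tfrac1nS_n(\bx)-\bx|<\delta$ or not, one obtains for every $\bx\in\Delta_{d-1}$ that
\[
|B_{d,n}f(\bx)-f(\bx)|\le\mathbb{E}\Big[\big|f\big(\tfrac1nS_n(\bx)\big)-f(\bx)\big|\Big]\le\omega_f(\delta)+2\|f\|_\infty\,\mathbb{P}\Big(\big|\tfrac1nS_n(\bx)-\bx\big|\ge\delta\Big)\le\omega_f(\delta)+\frac{2\|f\|_\infty}{n\delta^2}.
\]
Taking the supremum over $\bx$, then letting $n\to\infty$ and afterwards $\delta\downarrow0$, gives $\limsup_{n\to\infty}\|B_{d,n}f-f\|_\infty\le\omega_f(\delta)\to0$, which proves the claim.

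I do not expect any genuine obstacle here; the one point requiring a little care is the uniformity in $\bx$, which is delivered precisely because the second-moment bound above is $\bx$-independent and because $f$ is uniformly continuous on the compact simplex. As an alternative route, one could invoke a Korovkin-type theorem on $\Delta_{d-1}$ with the test set $\{1,x_1,\dots,x_d,|\bx|^2\}$ and verify $B_{d,n}1=1$, $B_{d,n}x_i=x_i$, and $B_{d,n}(|\bx|^2)(\bx)=|\bx|^2+\frac1n(1-|\bx|^2)\to|\bx|^2$ uniformly; the probabilistic argument sketched above is essentially the proof of that Korovkin theorem specialized to the operators $B_{d,n}$.
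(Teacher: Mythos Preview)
Your argument is correct: the probabilistic representation, the uniform second-moment bound $\mathbb{E}\big[\|\tfrac1nS_n(\bx)-\bx\|^2\big]\le 1/n$, and the splitting via the modulus of continuity all go through, and uniformity in $\bx$ is secured exactly where you say. One notational caveat: in this paper $|\bx|$ means $x_1+\cdots+x_d$ (identically $1$ on $\Delta_{d-1}$) while $\|\bx\|$ is the Euclidean norm, so your $|\cdot|$ should be $\|\cdot\|$ throughout; likewise your Korovkin test function ``$|\bx|^2$'' should read $\|\bx\|^2=\sum_i x_i^2$.

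The paper itself does not prove Proposition~\ref{prop:approximation-d}; it is quoted from \cite{Altomare}. However, for the closely related Proposition~\ref{prop:approximation-d-modified} (the version with mutation parameters $\mathbf{q}_n$) the paper takes precisely the Korovkin route you mention at the end: it evaluates $B_{d,n}^{(\mathbf{q}_n)}$ on $\bm{1}$, $e_i$, and $e_i^2$ and then invokes Korovkin's theorem on $\Delta_{d-1}$. Your direct Chebyshev argument is more self-contained and yields the explicit bound $\|B_{d,n}f-f\|_\infty\le\omega_f(\delta)+2\|f\|_\infty/(n\delta^2)$, whereas the Korovkin approach has the advantage of transferring verbatim to the perturbed operators $B_{d,n}^{(\mathbf{q}_n)}$, for which the underlying multinomial is centered at $\bx^{(\mathbf{q}_n)}$ rather than $\bx$ and the direct coupling is slightly less clean.
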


Our strategy to obtain limit theorems of interest basically consists of 
two parts. One is to express the iterates of the $d$-dimensional
Bernstein operator in terms of the expectation of 
a multinomial distribution. 
As the matter of fact, 
the operator $B_{d,n}$ has the expression 
$$
B_{d, n}f(\bx)=\mathbb{E}\left[f\left(\frac{1}{n}S_n(\bx)\right)\right], \qquad 
f \in C(\Delta_{d-1}), \, \bx \in \Delta_{d-1},
$$
similarly to \eqref{Bernstein-expectation}, where 
$S_n(\bx)$ denotes the multinomial random variable with 
parameters $n$ and $\bx$.
This expression allows us to relate the iterate $B_{d, n}^k$
to a time-homogeneous discrete Markov chain $\{H_{n}^k(\bx)\}_{k=0}^\infty$
with values in $(1/n)\mathbb{Z} \cap \Delta_{d-1}$. 
Hence, every limit theorem for $B_{d, n}^k$ can read probabilistic that for 
the Markov chain $\{H_{n}^k(\bx)\}_{k=0}^\infty$. 

The other is to employ some powerful tools from functional analysis
such as {\it Trotter's approximation theorem} (cf.~\cite{Trotter, Kurtz}),
which provides a sufficient condition for the iterate of 
a bounded linear operator to converge to a $C_0$-contraction semigroup
on some Banach space. Since we focus on iterates of the multidimensional 
Bernstein operator, the theorem is quite useful in order to deduce desired limit theorems 
for them. Moreover, by applying a refinement of Trotter's approximation theorem
(cf.~\cite{Namba}), 
we can also establish quantitative estimates for the limit theorems simultaneously.
See Propositions \ref{Prop:Trotter} and \ref{Prop:Trotter-rate} for their precise statements.

By following the strategy above, we first show in Theorem \ref{Thm:Bernstein-1} that 
$(B_{d, n})^{\lfloor nt \rfloor}f$, $f \in C(\Delta_{d-1})$, 
uniformly converges to the $d$-dimensional diffusion semigroup corresponding to
\begin{equation}
        \label{SDE:d-Bernstein}
            \dd \mathsf{X}_t^i(\bx)=\sum_{j=1}^d
            \sigma_{ij}\big(\mathsf{X}_t(\bx)\big) \, \dd W_t^j, 
            \qquad \mathsf{X}_0(\bx)=\bx \in \Delta_{d-1}, \qquad 
            i=1, 2, \dots, d,
        \end{equation}
        where $\sigma=(\sigma_{ij})_{i, j=1}^d : \Delta_{d-1} \to \R^d \otimes \R^d$ 
        satisfies that 
        \begin{equation}\label{Eq:diffusion-matrix}
        \sigma(\bx)\sigma(\bx)^\top=\Big(x_i(\delta_{ij}-x_j)\Big)_{i, j=1}^d, 
        \qquad \bx \in \Delta_{d-1},
        \end{equation}
        and 
        $(W_t)_{t \ge 0}=(W_t^1, W_t^2, \dots, W_t^d)_{t \ge 0} $ 
        is a $d$-dimensional standard Brownian motion. 
The process $(\mathsf{X}_t)_{t \ge 0}
=(\mathsf{X}_t^1, \mathsf{X}_t^2, \dots, \mathsf{X}_t^d)_{t \ge 0}$ is called the 
{\it $d$-dimensional Wright--Fisher diffusion}, which is a
$d$-dimensional extension of \eqref{Eq:onedim-Wright-Fisher}.
Here, the dimension $d$ corresponds to the number of possible allele types. 
Moreover, we also establish a strong result that 
the linearly interpolated Markov chain corresponding to 
the iterate of $B_{d,n}$ converges in law to the $d$-dimensional 
Wright--Fisher diffusion in the space of $(1/2-)$-H\"older 
continuous functions on $\Delta_{d-1}$. See Theorem \ref{Thm:Bernstein-2}
for its statement.

On the other hand, in considering the Wright--Fisher diffusions
in the context of population genetics, 
it might be natural to take an effect of {\it mutation} 
among $d$ alleles into account. 
The {\it $d$-dimensional Wright--Fisher diffusion with mutation} is 
given by the solution to the stochastic differential equation
\begin{equation}
        \label{SDE:d-Bernstein-q}
            \dd \mathsf{X}_t^i(\bx)=\sum_{j=1}^d
            \sigma_{ij}\big(\mathsf{X}_t(\bx)\big) \, \dd W_t^j
            +\sum_{j=1}^d q_{ji}\mathsf{X}_t^j\, \dd t, 
            \qquad \mathsf{X}_0(\bx)=\bx \in \Delta_{d-1},
        \end{equation}
for $i=1, 2, \dots, d$, where $q_{ij}$, $i \neq j$, represents 
the intensity of a mutation 
from the allele of type $i$ to that of type $j$, and 
$q_{ii}=-\sum_{j \neq i}q_{ij}$ for $i=1, 2, \dots, d$. 

In order to handle with more complex stochastic phenomena arising population genetics 
through the limit of the iterates of positive linear operators, 
we extend the definition of the multidimensional Bernstein operator in the following.

\begin{df}\label{Def:modified Bernstein}
    Let $n \in \mathbb{N}$ and $\mathbf{q}_n=\{q_{ij}^{(n)}\}_{i, j=1}^d$ be real numbers satisfying
    \begin{equation}\label{Eq:cond-q}
    q_{ij}^{(n)}>0, \quad i \neq j, \quad \text{and} \quad 
    q_{ii}^{(n)}=-\sum_{j \neq i}q_{ij}^{(n)}, 
    \quad i=1, 2, \dots, d. 
    \end{equation}
    For $n \in \N$, $f \in C(\Delta_{d-1})$ and $\bx \in \Delta_{d-1}$,
       a positive linear operator 
       $B_{d,n}^{(\mathbf{q}_n)}$ acting on $C(\Delta_{d-1})$ is defined by 
       \begin{equation}\label{Eq:q-Bernstein}
       B_{d, n}^{(\mathbf{q}_n)} f(\bx) 
        = \sum_{\bk \in \N_0^d, |\bk| = n} 
            \frac{n!}{k_1!k_2!\cdots k_d!}
            (x_1^{(\mathbf{q}_n)})^{k_1}(x_2^{(\mathbf{q}_n)})^{k_2}\cdots 
            (x_d^{(\mathbf{q}_n)})^{k_d}
            f \left(\frac{\bk}{n}\right),
       \end{equation}
       where $\bx^{(\mathbf{q}_n)}
       =(x_1^{(\mathbf{q}_n)}, x_2^{(\mathbf{q}_n)},
       \dots, x_d^{(\mathbf{q}_n)}) \in \Delta_{d-1}$ is given by 
       \[
        x_i^{(\mathbf{q}_n)}:=x_i+\sum_{j=1}^d q_{ji}^{(n)}x_j, \qquad 
        i=1, 2, \dots, d. 
       \]
       The operator $B_{d, n}^{(\mathbf{q}_n)}$ is called 
       the $d$-dimensional Bernstein operator associated with $\mathbf{q}_n$. 
\end{df}

In formally putting $q_{ij}^{(n)} \equiv 0$, 
the operator $B_{d, n}^{(\mathbf{q}_n)}$ is nothing but 
the classical Bernstein operator $B_{d, n}$. 
It is important that, under a quite natural assumption {\bf (Q1)} 
(see Section \ref{Sect:Settings}), the operator $B_{d, n}^{(\mathbf{q}_n)}$ also 
approximates every element in $C(\Delta_{d-1})$ uniformly. 
Moreover, thanks to Assumption {\bf (Q1)}, 
we can prove in Theorem \ref{Thm:Bernstein-general-1} that the iterate  
$(B_{d, n}^{(\mathbf{q}_n)})^{\lfloor nt \rfloor}f$, $f \in C(\Delta_{d-1})$, 
uniformly converges to the diffusion semigroup corresponding to 
\eqref{SDE:d-Bernstein-q}, together with its rate of convergence. 
To our best knowledge, this is the first result in that the high-dimensional 
Wright--Fisher diffusion with mutation is captured as the limit of the iterates
of a positive linear operator, which provides a new insight in the study of 
population genetics, needless to say probability theory, 
approximation theory and functional analysis. 
Furthermore, in Theorem \ref{Thm:Bernstein-general-2}, 
the convergence of process-level is also obtained similarly to the case 
in the absence of mutation effect. 

On the other hand, 
the {\it Fleming--Viot process} is well known 
as one of most famous and important 
examples of measure-valued diffusion processes, 
which has become an active branch 
of probability theory. 
This process was introduced by 
Fleming and Viot themselves in \cite{FV}.
When the number of allele types is uncountably infinite, 
the simplex $\Delta_{d-1}$ is replaced by the set of 
Borel probability measures $\mathcal{P}(E)$ 
on a compact metric space $E$, 
and the infinitesimal generator $\mathfrak{A}$ of 
the Fleming--Viot process, acting on the space $C(\mathcal{P}(E))$ 
of continuous functions on 
$\mathcal{P}(E)$, is given by
\begin{align}
\mathfrak{A}\varphi(\mu)
&=\frac{1}{2}\int_E\int_E\mu(\dd z)\big(\delta_z(\dd w) - \mu(\dd w)\big)
\frac{\delta^2 \varphi(\mu)}{\delta\mu(z)\delta\mu(w)} \nn\\
&\hspace{1cm}+\int_E \mu(\dd z)\mathcal{Q}\left(\frac{\delta \varphi(\mu)}{\delta\mu(\cdot)}\right)(z),
\label{Eq:generator-FV}
\end{align}
where $\delta_z$ denotes the delta measure centered at $z \in E$
and $(\mathcal{Q}, \Dom(\mathcal{Q}))$ is the infinitesimal generator of a Feller semigroup
on $C(E)$, referring to as the {\it mutation operator}. Moreover, we mean that
\[
\begin{aligned}
\frac{\delta \varphi(\mu)}{\delta\mu(z)}&=\lim_{\ve \searrow 0}\frac{1}{\ve}
\big\{\varphi(\mu+\ve\delta_z) - \varphi(\mu)\big\}. \\
\end{aligned}
\]
 
As an infinite-dimensional extension of 
Theorems \ref{Thm:Bernstein-1} and \ref{Thm:Bernstein-general-1}, 
we aim to capture the Fleming--Viot process 
in terms of the limit of the
iterate of the multidimensional Bernstein operator 
by an effective use of Trotter's approximation theorem
under some technical but natural assumptions.
For this purpose, we need to take a {\it discretization} of $E$, that is, 
a sequence $E^{(d)}=\{z_1^{(d)}, z_2^{(d)}, \dots, z_d^{(d)}\} \subset E$, 
$d \ge 2$, 
with $z_i^{(d)} \neq z_j^{(d)}$ whenever $i \neq j$. 
Thanks to this discretization, we come
to discuss a finite-dimensional approximation of the Fleming–Viot process 
in terms of the Bernstein operator on $\Delta_{d-1}$.

If we wish to capture \eqref{Eq:generator-FV} in our framework, 
one might try to introduce 
the ``infinite-dimensional Bernstein operator'' $B_{\infty, n}^{(\mathbf{q}_n)}$ 
in a certain sense. 
However, this direction seems to be not too much better 
in that the multinomial distribution 
with infinite categories may appear in the definition, 
which is to be less treatable. 
Instead, our idea is to replace the dimension $d$ of the Bernstein operator 
by a certain sequence $\{d_n\}_{n=1}^\infty \subset \N$
with $d_n=o(n^{1/8})$ and then
take the limit as $n \to \infty$, 
where the choice of the order of $d_n$ is due to some technical reasons.
Moreover, we need to modify Assumption {\bf (Q1)}
according to the choice of $\{d_n\}_{n=1}^\infty$.
Under the new assumption, say {\bf (Q2)}, together with 
another assumption {\bf (Q3)} for the convergence of 
$\{q_{ij}^{(n)}\}_{i, j=1}^{d_n}$ to the mutation operator $\mathcal{Q}$ 
as $n \to \infty$, 
we establish a limit theorem for the iterate
$(B_{d_n, n}^{(\mathbf{q}_n)})^{\lfloor nt \rfloor}$ 
in Theorem \ref{Thm:Bernstein-infinity-1}. 
The limiting object we capture here 
is the very Fleming--Viot diffusion semigroup on $C(\mathcal{P}(E))$
generated by the linear operator $\mathfrak{A}$. 
This limit theorem seems to be whole new and fascinating 
in that a measure-valued diffusion process 
can be obtained through the limit of the iterate of 
a certain positive linear operator as both the number of iteration 
and the dimension tend to infinity simultaneously 
under several assumptions.

\subsection{Organization}
    The rest of the present paper is organized as follows: 
    In Section \ref{Sect:Settings}, we give several properties of the
    Bernstein operator $B_{d,n}^{(\mathbf{q}_n)}$ associated with 
    $\mathbf{q}_n$ under Assumption {\bf (Q1)}. 
    All of our main results 
    in the present paper are stated in Section \ref{Sect:main results}. 
    We are going to show these results step by step, that is, 

    \vspace{1mm}
    \noindent
    {\bf Step 1}. limit theorems for the iterate of $B_{d,n}$ 
    (Theorems \ref{Thm:Bernstein-1} and \ref{Thm:Bernstein-2}),

    \vspace{1mm}
    \noindent
    {\bf Step 2}. limit theorems for the iterate of $B_{d,n}^{(\mathbf{q}_n)}$ 
    (Theorems \ref{Thm:Bernstein-general-1} and \ref{Thm:Bernstein-general-2}),

    \vspace{1mm}
    \noindent
    {\bf Step 3}. limit theorems for the iterate of $B_{d_n,n}^{(\mathbf{q}_n)}$ 
    (Theorem \ref{Thm:Bernstein-infinity-1}).

    \vspace{1mm}
    \noindent
    Sections \ref{Sect:Proofs-1} and \ref{Sect:Proofs-2}
    are devoted to the proofs of {\bf Step 1} and {\bf Step 2}, respectively. 
    The former part of Section \ref{Sect:Proofs-1} 
    (resp.~Section \ref{Sect:Proofs-2}) discusses the uniform convergence of 
    $\{n(B_{d,n}-I)\}_{n=1}^\infty$ 
    (resp.~$\{n(B_{d,n}^{(\mathbf{q}_n)}-I)\}_{n=1}^\infty$) 
    to the infinitesimal generator corresponding to 
    the stochastic differential equation \eqref{SDE:d-Bernstein} 
    (resp.~\eqref{SDE:d-Bernstein-q}), where $I$ denotes the identity operator. 
    See Lemma \ref{Lem:Voronovskaya-Berstein} 
    (resp.~Lemma \ref{Lem:Voronovskaya-Berstein-genreal}) for the statement. 
    This kind of convergence result is called the
    {\it Voronovskaya-type theorem} in the context of approximation theory. 
    Moreover, under several additional assumptions, 
    its convergence rate is also established, 
    which plays a key role in obtaining the rate of convergence for semigroups. 
    In the latter part of Section \ref{Sect:Proofs-1} 
    (resp.~Section \ref{Sect:Proofs-2}), 
    we show the uniform convergence of the iterate 
    $(B_{d,n})^{\lfloor nt \rfloor}$ 
    (resp.~$(B_{d,n}^{(\mathbf{q}_n)})^{\lfloor nt \rfloor}$) 
    to the Wright--Fisher diffusion semigroup corresponding to 
    \eqref{SDE:d-Bernstein} (resp.~\eqref{SDE:d-Bernstein-q}) together with 
    its rate of convergence, by applying the Voronovskaya-type theorem. 
    We show the convergence in law to the sequence of 
    linearly interpolated Markov chain generated 
    by the iterate of the Bernstein operator in the H\"older space as well.
    In Section \ref{Sect:Fleming--Viot}, 
    we give our infinite-dimensional framework and mention that several 
    classical examples of measure-valued diffusions 
    such as {\it Ohta--Kimura model} (cf.~\cite{OK}) 
    and {\it infinitely-many-neutral-alleles model with uniform mutation} (cf.~\cite{KC}) 
    are to be in scope of our framework 
    (see Examples \ref{Ex:Ohta-Kimura} and \ref{Ex:Kimura-Crow}).
    Furthermore, we establish the 
    Voronovskaya-type theorem (Lemma \ref{Thm:Voronovskaya-infinite})
    and the uniform convergence of 
    $(B_{d_n,n}^{(\mathbf{q}_n)})^{\lfloor nt \rfloor}$ 
    to the Fleming--Viot diffusion semigroup generated 
    by $\mathfrak{A}$ (Theorem \ref{Thm:Bernstein-infinity-1}). 
    Some conclusions and further possible directions 
    of this study are given in Section \ref{Sect:Conclusion}.

\subsection{Notations}

    We here fix several notations for later use. 
    For $\bx=(x_1, x_2, \dots, x_d) \in \R^d$, we write 
    \[
    |\bx|:=x_1+x_2+\cdots+x_d, \qquad 
    \|\bx\| := (x_1^2+x_2^2+\cdots+x_d^2)^{1/2}.
    \]
    Moreover, we sometimes write 
    \[
    \del_i=\frac{\del}{\del x_i}, \qquad 
    \del_{ij}=\frac{\del^2}{\del x_i \del x_j}, \qquad 
    \del_{ijk}=\frac{\del^3}{\del x_i \del x_j \del x_k},
    \qquad i, j, k=1, 2, \dots, d,
    \]
    in short. 
    For $1 \le r \le \infty$, 
    we denote by $C^r(\Delta_{d-1})$ 
    the space of all continuous functions defined on $ \Delta_{d-1} $
    having continuous partial derivatives of
    order $r$. When a function 
    $f : \Delta_{d-1} \to \mathbb{R}$ is Lipschitz continuous, 
    we put 
    \[
    \mathrm{Lip}(f):=\sup_{\bx, \by \in \Delta_{d-1}, \bx \neq \by}
    \frac{|f(\bx)-f(\by)|}{\|\bx-\by\|}(<\infty).
    \]
    For a multi-index $\bk=(k_1, k_2, \dots, k_d) \in \N_0^d$, 
    we put $|\bk|:=k_1+k_2+\cdots+k_d$. 
    The symbol $\lfloor x \rfloor$ is the floor function, that is, 
    $\lfloor x \rfloor:=\max\{n \in \mathbb{Z} \, | \, n \le x\}$ for $x \in \R$. For $a, b \in \R$, we write $a \vee b=\max\{a, b\}$. 
    Throughout the present paper, we denote by $C$ a 
    constant which may vary from line to line.

\section{{\bf The multidimensional Bernstein operator and its extension}}
\label{Sect:Settings}

    In this section, we introduce a multidimensional Bernstein operator $B_{d,n}$ 
    and give its several properties. 
    Furthermore, an extension of 
    the multidimensional Bernstein operator $B_{d,n}^{(\mathbf{q}_n)}$ is also 
    introduced, whose iterates yield more complex phenomena 
    in population genetics 
    as is seen in Section \ref{Sect:main results}.

\subsection{The multidimensional Bernstein operator on the simplex}

Similarly to the one-dimensional case, 
the $d$-dimensional Bernstein operator \eqref{Eq:d-Bernstein} 
can be represented as 
the expectation of a certain $d$-dimensional 
random variable. 
Let $\bx=(x_1, x_2, \dots, x_d) \in \Delta_{d-1}$ and $n \in \N$. 
Suppose that $S_n^i(x_i), \, i=1, 2, \dots, d$, is 
the binomial random variable with parameters $n$ and $x_i$. 
Then, the random variable 
$S_n(\bx)=(S_n^1(x_1), S_n^2(x_2), \dots, S_n^d(x_d))$ follows the
{\it multinomial distribution}, that is,  
    \[
    \mathbb{P}(S_n(\bx)=\bk)
    =\frac{n!}{k_1!k_2!\cdots k_d!}
    x_1^{k_1}x_2^{k_2}\cdots x_d^{k_d}, \qquad 
    \bx \in \Delta_{d-1}, \, \bk \in \N_0^d, \, |\bk| = n.
    \]
Then, we have the multidimensional version of \eqref{Bernstein-expectation}
as in the following:
    \begin{equation}
    \label{Eq:d-Bernstein-expectation}
    B_{d,n}f(\bx)=\mathbb{E}\left[ f\left(\frac{1}{n}S_n(\bx)\right) \right], 
    \qquad n \in \N, \, f \in C(\Delta_{d-1}), \, \bx \in \Delta_{d-1}. 
    \end{equation}
We now define a random function 
$G_n : \Delta_{d-1} \to \R$ by 
    \[
    G_n(\bx):=\frac{1}{n}S_n(\bx), \qquad \bx \in \Delta_{d-1}. 
    \]
Let $\{G_n^k\}_{k=1}^\infty$ be a sequence of 
independent copies of $G_n$. 
For $N=1, 2, 3, \dots$, we also define a random function
$H_n^N : \Delta_{d-1} \to \R$ by 
$H_n^N:=G_n^N \circ G_n^{N-1} \circ \cdots \circ G_n^1$.
Then, one  sees that 
    \begin{equation}\label{Eq:d-Bernstein-iterates}
    (B_{d,n})^Nf(\bx)
    =\mathbb{E}\left[f\big(H_n^N(\bx)\big)\right],
    \qquad N=0, 1, 2, \dots, \, \bx \in \Delta_{d-1},
    \end{equation}
with the convention that $H_n^0(\x)=\x$. 
Moreover, the sequence $\{H_n^N(\bx)\}_{N=1}^\infty$
is a time-homogeneous Markov chain taking values with
    \[
    \mathcal{I}=\left\{\frac{\bi}{n}=\left(\frac{i_1}{n}, \frac{i_2}{n}, 
    \dots, \frac{i_d}{n}\right) \, \Big| \, 
    i_\ell=0, 1, 2, \dots, n, \,\, 
    \ell=1, 2, \dots, d, \,\, |\bi|=\sum_{\ell=1}^d i_\ell = n
    \right\}.
    \]
Its one-step transition probability is given by 
    \begin{equation}\label{Eq:transition-prob-Bernstein}
    \begin{aligned}
    &\mathbb{P}\left(
    H_n^{N+1}(\bx)=\frac{\bj}{n} \, \Big| \, 
    H_n^{N}(\bx)=\frac{\bi}{n}\right) 
    =\frac{n!}{j_1!j_2!\cdots j_d!}\left(\frac{i_1}{n}\right)^{j_1}
    \left(\frac{i_2}{n}\right)^{j_2}\cdots
    \left(\frac{i_d}{n}\right)^{j_d}
    \end{aligned}
    \end{equation}
 for all $\bi, \bj \in \N_0^d$ with 
 $\bi/n, \, \bj/n \in \mathcal{I}$. 
 This means that the Markov chain $\{nH_n^N\}_{N=1}^\infty$
 is viewed as the $d$-{\it allele Wright--Fisher model 
 without mutation}, which describes
 a population of individuals of different types.
 We refer to e.g., \cite{Feller} 
 for an early work on some relations 
 between the Wright--Fisher model
 and certain diffusion phenomena, and 
 \cite{EK} for a comprehensive 
 explanation of the Wright--Fisher model from mathematical perspectives.

\begin{re}\normalfont
The $d$-dimensional Bernstein operator 
on the $d$-dimensional hypercube $[0, 1]^d$ is also known. 
This is defined by 
    \[
    \begin{aligned}
    \ol{B}_{d, n}f(\bx)&=\sum_{\bk \in \N_0^d, |\bk| = n}
    \binom{n}{k_1}\binom{n}{k_2} \cdots \binom{n}{k_d} \\
    &\hspace{1cm}\times 
    x_1^{k_1}(1-x_1)^{n-k_1}x_2^{k_2}(1-x_2)^{n-k_2}
    \cdots x_d^{k_d}(1-x_d)^{n-k_d}
     \end{aligned}
     \]
for $f \in C([0, 1]^d)$ and $\bx \in [0, 1]^d$. 
See e.g., \cite{Butzer} for an early work
about this operator. 
It is known that 
the linear operator $\ol{B}_{d,n}$ also has the 
same approximation property as Proposition \ref{prop:approximation-d}.
(see e.g., \cite[Theorem 4.11]{Altomare}). 
We do not discuss this operator in the present paper.  
However, it might be an interesting problem 
to establish limit theorems for its iterates
in a probabilistic point of view. 
\end{re}

\subsection{An extension  of the multidimensional Bernstein operator}

As  in the one-dimensional case, 
the iterate of the $d$-dimensional Bernstein operator 
is expected to converge to the $d$-allele Wright--Fisher model without mutation.
In order to create some effects of mutation in 
the limiting phenomena, we take $\mathbf{q}_n=\{q_{ij}^{(n)}\}_{i, j=1}^d$ to be 
real numbers satisfying \eqref{Eq:cond-q} and introduce the notion of 
the $d$-dimensional Bernstein operator $B_{d, n}^{(\mathbf{q}_n)}$ 
which is associated with the
intensity of the mutation from allele $i$ to 
allele $j$ for $i, j=1, 2, \dots, d$ with $i \neq j$.
See Definition \ref{Def:modified Bernstein} for more details.

Then, one wonders if the Bernstein operator $B_{d, n}^{(\mathbf{q}_n)}$
also approximates every continuous function in $C(\Delta_{d-1})$ uniformly or not. 
To deduce the approximation property, we need to put the following assumption for $\mathbf{q}_n$.

\vspace{2mm}
\noindent
{\bf (Q1)}: There exist real numbers $\mathbf{q}=\{q_{ij}\}_{i, j=1}^d$ satisfying 
\begin{itemize}
    \item 
    It holds that $q_{ij}>0, \, i \neq j$, and 
    \[
    q_{ii}=-\sum_{j \neq i}q_{ij}, \quad i=1, 2, \dots, d.
    \]
    \item 
    There exists some $C>0$ independent of $n \in \N$ such that
    $$
    \left|q_{ij}^{(n)}-\frac{1}{n}q_{ij}\right| \le \frac{C}{n^\gamma}, \qquad i, j=1, 2, \dots, d, \, n \in \N,
    $$
    for some $\gamma>1$.
\end{itemize}
  
\noindent
Then, under Assumption {\bf (Q1)}, we have the following approximation property for $B_{d, n}^{(\mathbf{q}_n)}$
similarly to the case of $B_{d, n}$.
\begin{pr}
    \label{prop:approximation-d-modified}
    We assume {\bf (Q1)}. Then, for any $f \in C(\Delta_{d-1})$, we have
    \[
    \lim_{n \to \infty}\|B_{d,n}^{(\mathbf{q}_n)}f-f\|_\infty=0. 
    \]
    
\end{pr}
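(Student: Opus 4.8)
The plan is to realize $B_{d,n}^{(\mathbf{q}_n)}$ as the ordinary $d$-dimensional Bernstein operator evaluated at a slightly perturbed argument, and then to show that Assumption {\bf (Q1)} forces that perturbation to be of size $1/n$, uniformly on $\Delta_{d-1}$. First I would record that, comparing \eqref{Eq:q-Bernstein} with the multinomial probability mass function,
\[
B_{d,n}^{(\mathbf{q}_n)}f(\bx)=\mathbb{E}\!\left[f\!\left(\tfrac1n S_n(\bx^{(\mathbf{q}_n)})\right)\right]=B_{d,n}f\big(\bx^{(\mathbf{q}_n)}\big),\qquad f\in C(\Delta_{d-1}),\ \bx\in\Delta_{d-1},
\]
whenever $\bx^{(\mathbf{q}_n)}\in\Delta_{d-1}$. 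The relation $\sum_i q_{ji}^{(n)}=0$ from \eqref{Eq:cond-q} gives $|\bx^{(\mathbf{q}_n)}|=1$ for all $n$, while {\bf (Q1)} yields $|q_{ij}^{(n)}|\le C/n$ for $n$ large (as $n^\gamma\ge n$), hence $1+q_{ii}^{(n)}=1-\sum_{j\neq i}q_{ij}^{(n)}>0$ and $x_i^{(\mathbf{q}_n)}=(1+q_{ii}^{(n)})x_i+\sum_{j\neq i}q_{ji}^{(n)}x_j\ge 0$; thus $\bx^{(\mathbf{q}_n)}\in\Delta_{d-1}$ for $n$ large, and moreover
\[
\|\bx^{(\mathbf{q}_n)}-\bx\|^2=\sum_{i=1}^d\Big(\sum_{j=1}^d q_{ji}^{(n)}x_j\Big)^{\!2}\le \frac{C}{n^2}
\]
uniformly in $\bx\in\Delta_{d-1}$, with $C$ depending only on $d$ and $\mathbf{q}$.

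Next I would estimate $B_{d,n}^{(\mathbf{q}_n)}f-f$ by a second-moment argument. Set $Y_n:=\tfrac1n S_n(\bx^{(\mathbf{q}_n)})\in\Delta_{d-1}$; since the $i$-th coordinate of $S_n(\bx^{(\mathbf{q}_n)})$ is $\mathrm{Binomial}(n,x_i^{(\mathbf{q}_n)})$, one has $\mathbb{E}[Y_n]=\bx^{(\mathbf{q}_n)}$ and $\mathbb{E}\big[\|Y_n-\bx^{(\mathbf{q}_n)}\|^2\big]=\sum_i x_i^{(\mathbf{q}_n)}(1-x_i^{(\mathbf{q}_n)})/n\le d/(4n)$; combined with the previous display, $\mathbb{E}\big[\|Y_n-\bx\|^2\big]\le C/n$ uniformly in $\bx$. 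Now fix $\varepsilon>0$; by uniform continuity of $f$ on the compact set $\Delta_{d-1}$ choose $\delta>0$ with $|f(\by)-f(\bx)|<\varepsilon$ whenever $\|\by-\bx\|<\delta$. Splitting $\mathbb{E}\big[|f(Y_n)-f(\bx)|\big]$ over $\{\|Y_n-\bx\|<\delta\}$ and its complement and applying Chebyshev's inequality,
\[
\big|B_{d,n}^{(\mathbf{q}_n)}f(\bx)-f(\bx)\big|\le \mathbb{E}\big[|f(Y_n)-f(\bx)|\big]\le \varepsilon+2\|f\|_\infty\,\frac{\mathbb{E}[\|Y_n-\bx\|^2]}{\delta^2}\le \varepsilon+\frac{2\|f\|_\infty C}{\delta^2 n}.
\]
Taking the supremum over $\bx\in\Delta_{d-1}$ and then $n\to\infty$ gives $\limsup_{n\to\infty}\|B_{d,n}^{(\mathbf{q}_n)}f-f\|_\infty\le\varepsilon$, and letting $\varepsilon\downarrow0$ finishes the proof.

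Equivalently one could write $|B_{d,n}^{(\mathbf{q}_n)}f(\bx)-f(\bx)|\le |B_{d,n}f(\bx^{(\mathbf{q}_n)})-B_{d,n}f(\bx)|+|B_{d,n}f(\bx)-f(\bx)|$, dispatch the second term by Proposition \ref{prop:approximation-d}, and handle the first by coupling the multinomials $S_n(\bx)$ and $S_n(\bx^{(\mathbf{q}_n)})$ together with the modulus of continuity of $f$. I do not expect any genuine obstacle here: the entire content is that {\bf (Q1)} confines the mutation perturbation to the scale $1/n$. The only points needing a little care are the uniformity in $\bx$ of the bound $\mathbb{E}[\|Y_n-\bx\|^2]\le C/n$ (immediate, since binomial variances are $\le 1/(4n)$ irrespective of the parameter) and checking that $B_{d,n}^{(\mathbf{q}_n)}$ is a bona fide positive operator for the relevant range of $n$, which is precisely the statement $\bx^{(\mathbf{q}_n)}\in\Delta_{d-1}$ established in the first step.
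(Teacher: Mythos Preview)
Your argument is correct, but it follows a genuinely different route from the paper. The paper invokes Korovkin's theorem: it computes $B_{d,n}^{(\mathbf{q}_n)}\bm{1}=\bm{1}$, $B_{d,n}^{(\mathbf{q}_n)}e_i(\bx)=x_i^{(\mathbf{q}_n)}$ and $B_{d,n}^{(\mathbf{q}_n)}e_i^2(\bx)=\tfrac{n-1}{n}(x_i^{(\mathbf{q}_n)})^2+\tfrac{1}{n}x_i^{(\mathbf{q}_n)}$, observes (via {\bf (Q1)}) that these converge uniformly to $e_i$ and $e_i^2$, and concludes by \cite[Theorem~4.2]{Altomare}. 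Your proof instead exploits the identity $B_{d,n}^{(\mathbf{q}_n)}f(\bx)=B_{d,n}f(\bx^{(\mathbf{q}_n)})$ and runs the classical Bernstein--Chebyshev second-moment argument directly, using that {\bf (Q1)} makes $\|\bx^{(\mathbf{q}_n)}-\bx\|$ of order $1/n$. The Korovkin route is shorter once that theorem is available and fits the paper's functional-analytic toolkit; your route is more elementary and self-contained, and as a byproduct it delivers an explicit bound $\|B_{d,n}^{(\mathbf{q}_n)}f-f\|_\infty\le \omega(f;\delta)+2\|f\|_\infty C/(\delta^2 n)$ in terms of the modulus of continuity, which Korovkin alone does not give. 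Both approaches hinge on the same fact, namely $x_i^{(\mathbf{q}_n)}\to x_i$ uniformly at rate $1/n$; they simply package it differently.
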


\begin{proof}
    Let $\bm{1}$ be the constant function 
    $\bm{1}(\bx) \equiv 1$ 
    and $e_i(\bx)=x_i$ for $\bx=(x_1, x_2, \dots, x_d) \in \Delta_{d-1}$ and $i=1, 2, \dots, d$.
    It follows from the multinomial theorem 
    that $B_{d, n}^{(\mathbf{q}_n)}\bm{1}=\bm{1}$. 
    Moreover, we have 
    \[
\begin{aligned}
    B_{d, n}^{(\mathbf{q}_n)}e_i(\bx)
    &= \sum_{\bk \in \N_0^d, |\bk| = n} 
            \frac{n!}{k_1!k_2!\cdots k_d!}
            (x_1^{(\mathbf{q}_n)})^{k_1}(x_2^{(\mathbf{q}_n)})^{k_2}\cdots 
            (x_d^{(\mathbf{q}_n)})^{k_d}
             \times \frac{k_i}{n} \\
    &= x_i^{(\mathbf{q}_n)}\sum_{\substack{\bk \in \N_0^d, k_i \ge 1 \\ |\bk| = n-1}}
            \frac{(n-1)!}{k_1! \cdots (k_i-1)! \cdots k_d!}
            (x_1^{(\mathbf{q}_n)})^{k_1}\cdots
            (x_i^{(\mathbf{q}_n)})^{k_i-1}\cdots 
            (x_d^{(\mathbf{q}_n)})^{k_d} \\
    &= x_i^{(\mathbf{q}_n)}, \qquad i=1, 2, \dots, n,
\end{aligned}
    \]
and 
    \[
\begin{aligned}
    B_{d, n}^{(\mathbf{q}_n)}e_i^2(\bx)
    &= \sum_{\bk \in \N_0^d, |\bk| = n} 
            \frac{n!}{k_1!k_2!\cdots k_d!}
            (x_1^{(\mathbf{q}_n)})^{k_1}(x_2^{(\mathbf{q}_n)})^{k_2}\cdots 
            (x_d^{(\mathbf{q}_n)})^{k_d}
             \times \frac{k_i^2}{n^2} \\
    &= \frac{n-1}{n}x_i^{(\mathbf{q}_n)}
    \sum_{\substack{\bk \in \N_0^d, k_i \ge 1 \\ |\bk| = n-1}} 
    \frac{(k_i-1)+1}{n-1} \times 
            \frac{(n-1)!}{k_1!\cdots (k_i-1)!\cdots k_d!}\\
            &\hspace{1cm}\times 
            (x_1^{(\mathbf{q}_n)})^{k_1}\cdots
            (x_i^{(\mathbf{q}_n)})^{k_i-1}\cdots 
            (x_d^{(\mathbf{q}_n)})^{k_d} \\
    &= \frac{n-1}{n}(x_i^{(\mathbf{q}_n)})^2
    \sum_{\substack{\bk \in \N_0^d, k_i \ge 2 \\ |\bk| = n-2}}
            \frac{(n-2)!}{k_1! \cdots (k_i-2)! \cdots k_d!}\\
            &\hspace{1cm}\times
            (x_1^{(\mathbf{q}_n)})^{k_1}\cdots
            (x_i^{(\mathbf{q}_n)})^{k_i-2}\cdots 
            (x_d^{(\mathbf{q}_n)})^{k_d} 
            +\frac{1}{n}x_i^{(\mathbf{q}_n)}\\
    &= \frac{n-1}{n}(x_i^{(\mathbf{q}_n)})^2
    +\frac{1}{n}x_i^{(\mathbf{q}_n)}, \qquad i=1, 2, \dots, n.
\end{aligned}
    \]
Then, we obtain 
    \[
    \lim_{n \to \infty}
    \|B_{d, n}^{(\mathbf{q}_n)}e_i - e_i\|_\infty=0, 
    \qquad \lim_{n \to \infty}
    \|B_{d, n}^{(\mathbf{q}_n)}e_i^2 - e_i^2\|_\infty=0, 
    \qquad i=1, 2, \dots, d,
    \]
by using Assumption {\bf (Q1)}. 
Therefore, Korovkin's theorem 
(cf.~\cite[Theorem 4.2]{Altomare}) immediately implies 
the desired uniform convergence. 
\end{proof}

Let $\bx=(x_1, x_2, \dots, x_d) \in \Delta_{d-1}$ and $n \in \N$. 
Suppose that $S_n^i(x_i^{(\mathbf{q}_n)}), \, i=1, 2, \dots, d$, is 
the binomial random variable with parameters $n$ and $x_i^{(\mathbf{q}_n)}$. 
Then, the distribution of the random variable 
$S_n(\bx^{(\mathbf{q}_n)})=\big(S_n^1(x_1^{(\mathbf{q}_n)}), S_n^2(x_2^{(\mathbf{q}_n)}), \dots, S_n^d(x_d^{(\mathbf{q}_n)})\big)$ is given by  
    \[
    \mathbb{P}(S_n(\bx^{(\mathbf{q}_n)})=\bk)
    =\frac{n!}{k_1!k_2!\cdots k_d!}
    (x_1^{(\mathbf{q}_n)})^{k_1}
    (x_2^{(\mathbf{q}_n)})^{k_2}\cdots 
    (x_d^{(\mathbf{q}_n)})^{k_d}
    \]
for $\bx \in \Delta_{d-1}$ and $\bk \in \N_0^d$ with  $|\bk| = n$.
In the same manner as \eqref{Eq:d-Bernstein-iterates}, 
we can also associate the iterate of $B_{d, n}^{(\mathbf{q}_n)}$
with an $\mathcal{I}$-valued  time-homogeneous Markov chain 
$\{(H_n^{(\mathbf{q}_n)})^N(\bx)\}_{N=0}^\infty$
with $(H_n^{(\mathbf{q}_n)})^0(\x)=\x$. 
Namely, it holds that
\[
(B_{d,n}^{(\mathbf{q}_n)})^Nf(\bx)=\mathbb{E}\left[f\big((H_n^{(\mathbf{q}_n)})^N(\bx)\big)\right],
    \qquad N=0, 1, 2, \dots, \, \bx \in \Delta_{d-1}.
 \]
The one-step transition probability 
of $\{(H_n^{(\mathbf{q}_n)})^N(\bx)\}_{N=0}^\infty$ is 
written as
    \begin{align}\label{Eq:transition-prob-generalized-Bernstein}
    &\mathbb{P}\left(
    (H_n^{(\mathbf{q}_n)})^{N+1}(\bx)=\frac{\bj}{n} \, 
    \Big| \, 
    (H_n^{(\mathbf{q}_n)})^{N}(\bx)=\frac{\bi}{n}\right)\notag \\
    &=\frac{n!}{j_1!j_2!\cdots j_d!}
    \left(
    \frac{i_1}{n}+
    \sum_{j=1}^d q_{j1}^{(n)}\frac{i_j}{n}\right)^{j_1}
    \left(
    \frac{i_2}{n}+
    \sum_{j=1}^d q_{j2}^{(n)}\frac{i_j}{n}\right)^{j_2}
    \cdots
    \left(
    \frac{i_d}{n}+
    \sum_{j=1}^d q_{jd}^{(n)}\frac{i_j}{n}\right)^{j_d}\nn
    \end{align}
 for all $\bi, \bj \in \N_0^d$ with 
 $\bi/n, \, \bj/n \in \mathcal{I}$.

\subsection{Martingale property}

It is known that both  Markov chains 
$\{H_n^N(\bx)\}_{N=0}^\infty$ 
and $\{(H_n^{(\mathbf{q}_n)})^N(\bx)\}_{N=0}^\infty$ 
possess the martingale property, as in the following proposition. 
This property plays an important role in showing our main results, 
in particular, the tightness of the probability measures on a path space
induced by these Markov chains. 
See the proofs of 
Theorems \ref{Thm:Bernstein-2} and \ref{Thm:Bernstein-general-2}.

\begin{pr}
 \label{prop:martingale}
{\rm (1)} For every $\bx \in \Delta_{d-1}$, 
 the Markov chain $\{H_n^N(\bx)\}_{N=0}^\infty$
 is an $\mathcal{I}$-valued martingale. 

 \vspace{2mm}
 \noindent
 {\rm (2)} For every $\bx \in \Delta_{d-1}$, 
 the Markov chain 
     \[
    \{(H_n^{(\mathbf{q}_n)})^N(\bx) - (\bx^{(\mathbf{q}_n)}-\bx)\}_{N=0}^\infty
    \]
 is an $\mathcal{I}$-valued martingale. 
 
\end{pr}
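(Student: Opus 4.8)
The plan is to verify the martingale property directly from the one-step transition probabilities, exploiting the fact that the conditional expectation of one step of each chain is linear (respectively affine) in the current state. For part (1), I would fix $\bx \in \Delta_{d-1}$ and $N \ge 0$ and compute $\E[H_n^{N+1}(\bx) \mid \mathcal{F}_N]$, where $\mathcal{F}_N := \sigma(H_n^0(\bx), \dots, H_n^N(\bx))$. Since $\{H_n^N(\bx)\}_{N=0}^\infty$ is a time-homogeneous Markov chain, this conditional expectation equals $g_n(H_n^N(\bx))$, where for $\by = \bi/n \in \mathcal{I}$ one has $g_n(\by) = \E[G_n(\by)] = \E[(1/n)S_n(\by)]$. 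Because $S_n(\by)$ is multinomial with parameters $n$ and $\by$, its $i$-th coordinate $S_n^i(y_i)$ is binomial with parameters $n$ and $y_i$, so $\E[S_n^i(y_i)] = n y_i$ and hence $g_n(\by) = \by$. Therefore $\E[H_n^{N+1}(\bx) \mid \mathcal{F}_N] = H_n^N(\bx)$, which is the martingale identity; integrability and adaptedness are immediate since the chain takes values in the finite set $\mathcal{I} \subset \Delta_{d-1}$, which is bounded.

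For part (2), the same strategy applies, with the affine shift $\bx \mapsto \bx^{(\mathbf{q}_n)}$ bookkept carefully. Writing $\Phi_n(\by) := \by^{(\mathbf{q}_n)}$, i.e.\ $\Phi_n(\by)_i = y_i + \sum_{j=1}^d q_{ji}^{(n)} y_j$, the one-step transition probability displayed just before this subsection shows that, conditionally on $(H_n^{(\mathbf{q}_n)})^N(\bx) = \by$, the next state $(H_n^{(\mathbf{q}_n)})^{N+1}(\bx)$ is distributed as $(1/n) S_n(\Phi_n(\by))$, a multinomial with parameter vector $\Phi_n(\by)$. Hence
\[
\E\big[(H_n^{(\mathbf{q}_n)})^{N+1}(\bx) \mid \mathcal{F}_N\big] = \Phi_n\big((H_n^{(\mathbf{q}_n)})^N(\bx)\big).
\]
Now set $M_N := (H_n^{(\mathbf{q}_n)})^N(\bx) - (\bx^{(\mathbf{q}_n)} - \bx)$; the claim is that $\{M_N\}_{N=0}^\infty$ is a martingale. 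Since $\Phi_n$ is affine with $\Phi_n(\by) - \by = (\text{the mutation drift at } \by)$, and the shift $\bx^{(\mathbf{q}_n)} - \bx$ is a fixed vector, one computes $\E[M_{N+1} \mid \mathcal{F}_N] = \Phi_n((H_n^{(\mathbf{q}_n)})^N(\bx)) - (\bx^{(\mathbf{q}_n)} - \bx)$, and it remains to check that this equals $M_N = (H_n^{(\mathbf{q}_n)})^N(\bx) - (\bx^{(\mathbf{q}_n)} - \bx)$, i.e.\ that $\Phi_n(\by) - \by = \bx^{(\mathbf{q}_n)} - \bx$ along the trajectory. This is where one must be slightly careful: it is \emph{not} true that $\Phi_n(\by) - \by$ is constant in $\by$ in general. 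The resolution is that the correct compensator is built step by step; I would therefore instead verify the telescoping identity $\E[(H_n^{(\mathbf{q}_n)})^{N+1}(\bx) - (H_n^{(\mathbf{q}_n)})^N(\bx) \mid \mathcal{F}_N] = \Phi_n((H_n^{(\mathbf{q}_n)})^N(\bx)) - (H_n^{(\mathbf{q}_n)})^N(\bx)$ and reconcile this with the stated form of $M_N$ by re-reading the intended normalization — most likely the statement is using the linearized drift, or the chain is initialized at $\bx^{(\mathbf{q}_n)}$ rather than $\bx$, so that the increment of the deterministic part telescopes to exactly $\bx^{(\mathbf{q}_n)} - \bx$.

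The main obstacle is thus entirely in part (2): pinning down precisely which deterministic sequence the $(\bx^{(\mathbf{q}_n)} - \bx)$ term is standing in for, so that the drift genuinely cancels. Part (1) is a one-line moment computation for the multinomial. For part (2) I expect the honest argument to reduce to the observation that $\mathbf{q}_n$ defines a (sub-)stochastic-type matrix so that $\Phi_n$ maps $\Delta_{d-1}$ into itself, and that the affine part contributes a predictable increment whose cumulative sum is the claimed correction term; once the normalization convention in the statement is matched to this predictable sum, the martingale property follows by the same conditional-expectation computation as in part (1). Integrability and adaptedness are again automatic from finiteness of the state space.
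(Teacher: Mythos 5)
Your part (1) is correct and is exactly the paper's argument: the multinomial mean gives $\E[H_n^{N+1}(\bx)\mid H_n^N(\bx)=\bi/n]=\bi/n$, and integrability is automatic on the finite state space $\mathcal{I}$.

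For part (2) you have put your finger on the real issue, but you do not resolve it, so your proof is incomplete. Your one-step computation $\E[(H_n^{(\mathbf{q}_n)})^{N+1}(\bx)\mid\mathcal{F}_N]=\Phi_n\big((H_n^{(\mathbf{q}_n)})^N(\bx)\big)$, with $\Phi_n(\by)_i=y_i+\sum_{j}q_{ji}^{(n)}y_j$, is right, and so is your observation that the drift $\Phi_n(\by)-\by=h_n(\by)$ is genuinely state-dependent: constancy of $\by\mapsto\sum_j q_{ji}^{(n)}y_j$ on $\Delta_{d-1}$ would force $q_{ji}^{(n)}$ to be independent of $j$, which contradicts $q_{ii}^{(n)}<0<q_{ji}^{(n)}$ for $j\neq i$. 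Consequently subtracting the fixed vector $\bx^{(\mathbf{q}_n)}-\bx$ --- or $N(\bx^{(\mathbf{q}_n)}-\bx)$, which is what the interpolation \eqref{LI:Bernstein-general} actually uses --- cannot produce a martingale, so statement (2) as printed is not provable; the paper's own ``proof'' of (2) is a single sentence asserting similarity to (1) and offers no help here. Where your proposal falls short is that it ends in speculation about ``the intended normalization'' rather than a proof. The correct, provable statement is the Doob decomposition: $M_N:=(H_n^{(\mathbf{q}_n)})^N(\bx)-\sum_{k=0}^{N-1}h_n\big((H_n^{(\mathbf{q}_n)})^k(\bx)\big)$ is a martingale by precisely your conditional-expectation computation (equivalently, since $\Phi_n$ is linear and invertible for large $n$, applying the $N$-fold inverse of $\Phi_n$ to $(H_n^{(\mathbf{q}_n)})^N(\bx)$ gives an exact martingale). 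Because $\|h_n(\by)\|\le C/n$ uniformly by \eqref{h-estimate}, this compensator differs from $N(\bx^{(\mathbf{q}_n)}-\bx)$ only by $O(N/n)$, which is all the moment estimates in the proof of Theorem \ref{Thm:Bernstein-general-2} actually require. You should either prove this corrected version or explicitly flag that Proposition \ref{prop:martingale}(2) needs restating; as written, your treatment of (2) has a gap.
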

    
\begin{proof}
Fix $\x \in \Delta_{d-1}$.
By definition, the summability of each $H_n^N(\x)$ is clear. 
Moreover, for $N \in \N$ and a state $\mathbf{i}/n \in \mathcal{I}$, 
it follows from \eqref{Eq:transition-prob-Bernstein} that 
\begin{align*}
    &\E \Big[ H_n^{N+1}(\x) \, \Big| \, H_n^N(\x)=\frac{\mathbf{i}}{n}\Big] \\
    &=\sum_{\mathbf{j}/n \in \mathcal{I}} \frac{\mathbf{j}}{n} \times 
    \frac{n!}{j_1!j_2!\cdots j_d!}\left(\frac{i_1}{n}\right)^{j_1}
    \left(\frac{i_2}{n}\right)^{j_2}\cdots
    \left(\frac{i_d}{n}\right)^{j_d}
    =\frac{1}{n} \times n \times \frac{\mathbf{i}}{n}=\frac{\mathbf{i}}{n}=H_n^N(\x),
\end{align*}
which implies that Assertion (1) is true. 
Assertion (2) is also shown in a quite similar way to
the above. 
\end{proof}

\section{{\bf Main results}} 
\label{Sect:main results}

We are in a position to state the main results of the 
present paper. All of the proofs are demonstrated 
in the subsequent sections.

\subsection{Limit theorems for iterates of $B_{d, n}$}

At the beginning, we aim to find out 
the limiting behaviors of the iterate
of the $d$-dimensional 
Bernstein operator $B_{d, n}$. 
Let $\mathcal{A}_d$ be the second order differential operator 
acting on $C^2(\Delta_{d-1})$ defined by 
    \begin{equation}\label{Eq:generator-Bernstein}
    \mathcal{A}_df(\bx):=
    \frac{1}{2}\sum_{i, j=1}^d x_i
    (\delta_{ij}-x_j)\frac{\del^2f}{\del x_i \del x_j}(\bx)
    \end{equation}
for $f \in C^2(\Delta_{d-1})$ and $\bx \in \Delta_{d-1}$,
where $\delta_{ij}$ stands for the usual Kronecker's delta. 
Then, the following is well known. 

\begin{lm}[see e.g., \cite{Alt-C, Ethier}]
\label{Lem:Altomare-Campiti}
The closure of $\big(\mathcal{A}_d, C^2(\Delta_{d-1})\big)$
generates a contraction $C_0$-semigroup 
$({\sf T}_t)_{t \ge 0}$ 
on $C(\Delta_{d-1})$
and $C^2(\Delta_{d-1})$ is a core for the closure. 
\end{lm}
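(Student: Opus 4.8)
The plan is to invoke the Hille--Yosida--Ray theorem (the Lumer--Phillips-type characterization of Feller generators; see e.g.\ \cite{Ethier} or \cite{Alt-C}, and also \cite{EK}), which asserts that a densely defined linear operator $(A, \Dom(A))$ on $C(K)$, with $K$ a compact metric space, is closable and its closure generates a strongly continuous contraction (indeed Feller) semigroup provided that (i) $A$ satisfies the positive maximum principle, and (ii) $\Range(\lambda - A)$ is dense in $C(K)$ for some $\lambda > 0$. Here one takes $K = \Delta_{d-1}$ and $(A, \Dom(A)) = (\mathcal{A}_d, C^2(\Delta_{d-1}))$, which is densely defined because $C^2(\Delta_{d-1})$ contains all polynomials and these are dense in $C(\Delta_{d-1})$ by Proposition \ref{prop:approximation-d}.

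First I would check the positive maximum principle. Suppose $f \in C^2(\Delta_{d-1})$ attains a nonnegative maximum at $\bx_0 \in \Delta_{d-1}$. The coefficient matrix $a(\bx) = \big(x_i(\delta_{ij} - x_j)\big)_{i,j=1}^d$ is precisely the covariance matrix of the multinomial distribution with parameters $1$ and $\bx$, hence positive semidefinite for every $\bx \in \Delta_{d-1}$; moreover, whenever $x_i = 0$ the entire $i$-th row and column of $a(\bx)$ vanish, so $\mathcal{A}_d$ is tangential to each face of the simplex. Consequently $\bx_0$ may be assumed to lie in the relative interior of the face it belongs to, where the restriction of $\mathcal{A}_d$ is again a Wright--Fisher-type operator with positive semidefinite coefficient matrix and where the Hessian of $f$ along that face is negative semidefinite; it follows that $\mathcal{A}_d f(\bx_0) = \frac{1}{2}\operatorname{tr}\big(a(\bx_0)\, D^2 f(\bx_0)\big) \le 0$.

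Next I would establish the range condition. The key observation is that $\mathcal{A}_d$ preserves the finite-dimensional space $\mathcal{P}_m$ of polynomials of degree at most $m$ restricted to $\Delta_{d-1}$: applying $\del_{ij}$ lowers the degree by two, while multiplying by $x_i(\delta_{ij} - x_j)$ raises it by at most two. On $\mathcal{P}_m$ the operator $\lambda - \mathcal{A}_d$ is injective, since if $(\lambda - \mathcal{A}_d)p = 0$ with $p \not\equiv 0$ then $p$ or $-p$ attains a positive maximum at some $\bx_0$, forcing $\lambda\, p(\bx_0) = \mathcal{A}_d p(\bx_0) \le 0$ by the positive maximum principle, a contradiction; being a linear endomorphism of a finite-dimensional space, $\lambda - \mathcal{A}_d$ is therefore bijective on $\mathcal{P}_m$. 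Hence $\Range(\lambda - \mathcal{A}_d) \supseteq \bigcup_{m \ge 0}\mathcal{P}_m$, which is dense in $C(\Delta_{d-1})$. The Hille--Yosida--Ray theorem then gives that the closure of $(\mathcal{A}_d, C^2(\Delta_{d-1}))$ generates a contraction $C_0$-semigroup $({\sf T}_t)_{t \ge 0}$ on $C(\Delta_{d-1})$, and since $C^2(\Delta_{d-1})$ is by construction the domain of the operator whose closure is this generator, it is a core by definition.

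The main obstacle, and the only step requiring genuine care rather than bookkeeping, is the verification of the positive maximum principle at boundary points of $\Delta_{d-1}$: one must argue cleanly that the degeneracy of $a(\bx)$ along the faces makes $\mathcal{A}_d$ tangential, so that the maximum-principle reasoning reduces to the appropriate lower-dimensional face. Everything else is either a finite-dimensional linear-algebra argument or a direct appeal to the quoted abstract theorem.
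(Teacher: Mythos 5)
The paper offers no proof of this lemma: it is quoted as a known result with references to Altomare--Campiti and to Ethier's 1976 paper, so there is nothing internal to compare your argument against. That said, your proof is correct and is in fact the classical argument underlying those references (it is essentially the Wright--Fisher instance of the Hille--Yosida--Ray theorem as in Ethier--Kurtz). Both halves are sound: the positive maximum principle follows because $a(\bx)=\big(x_i(\delta_{ij}-x_j)\big)$ is positive semidefinite with $\sum_j a_{ij}(\bx)=x_i(1-|\bx|)=0$ on $\Delta_{d-1}$ and with vanishing $i$-th row and column whenever $x_i=0$, so that $a(\bx_0)$ has range inside the tangent space of the (relatively open) face containing $\bx_0$ and $\operatorname{tr}\big(a(\bx_0)D^2f(\bx_0)\big)$ only sees tangential second derivatives, which are $\le 0$ at a maximum; and the range condition via invariance of $\mathcal{P}_m$ under $\mathcal{A}_d$ plus injectivity of $\lambda-\mathcal{A}_d$ on each $\mathcal{P}_m$ (hence surjectivity by finite-dimensionality) is exactly the standard device. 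The only point you gloss over is that, since $\Delta_{d-1}$ has empty interior in $\R^d$ and elements of $C^2(\Delta_{d-1})$ are really restrictions of extensions, one should check that $\mathcal{A}_d f$ does not depend on the extension; this again follows from the tangency identity $\sum_j a_{ij}(\bx)=0$, and the paper itself leaves the same point implicit. The concluding remark that $C^2(\Delta_{d-1})$ is a core because it is the domain whose closure is the generator is also correct.
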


We can show that the iterate of the $d$-dimensional 
Bernstein operator $B_{d,n}$ uniformly converges to
the contrction $C_0$-semigroup $({\sf T}_t)_{t \ge 0}$ 
generated by $\mathcal{A}_d$ together with 
its rate of convergence. 
The following is our first main results. 

\begin{tm}
\label{Thm:Bernstein-1}
        For $ \bx \in \Delta_{d-1}, $
        let 
        $\big(\mathsf{X}_t(\bx)\big)_{t \ge 0}=
        \big(\mathsf{X}_t^1(\bx), \mathsf{X}_t^2(\bx), 
        \dots, \mathsf{X}_t^d(\bx)\big)_{t \ge 0}$ be the 
        $d$-dimensional diffusion process
        which solves the stochastic differential equation \eqref{SDE:d-Bernstein}.
        Then, we have 
        \begin{equation}\label{Eq:Bernstein-coinidence}
        {\sf T}_t f(\x)=\E\big[f\big({\sf X}_t(\x)\big)\big], \qquad f \in C(\Delta_{d-1}), \, \x \in \Delta_{d-1}, \, t \ge 0,
        \end{equation}
        and 
        \begin{equation*}
            \lim_{n \to \infty}
            \left\|(B_{d,n})^{\lfloor nt \rfloor}f 
            - \E\big[f\big({\sf X}_t(\cdot)\big)\big]\right\|_\infty=0
            , \qquad f \in C(\Delta_{d-1}), \, t \ge 0.
        \end{equation*}
        Moreover, if $f \in C^2(\Delta_{d-1})$ satisfies that
        
        \vspace{1mm}
        \noindent
        {\bf (A1):} ${\sf T}_t f \in C^2(\Delta_{d-1}), \, t \ge 0$, and
        
        \vspace{1mm}
        \noindent
        {\bf (A2):} 
        $\del_{ij}f$ and $\del_{ij}({\sf T}_t f)$, $i, j=1, 2, \dots, d$, 
        are Lipschitz continuous on $\Delta_{d-1}$, 
        
        \vspace{1mm}
        \noindent
        then we have 
        \begin{align}
        &\|(B_{d,n})^{\lfloor nt \rfloor} f - \E\left[f\big({\sf X}_t(\cdot)\big)\right]\|_{\infty} \nonumber\\
        &\le \left(\frac{t}{\sqrt{n}} + \frac{1}{n}\right)
        \left(\|\A_d f\|_{\infty} + \frac{1}{\sqrt{n}}\left(\frac{d^{5/2}}{16 \cdot 3^{1/4}}
        \max_{i, j=1, 2, \dots, d}
        \mathrm{Lip}\left(\del_{ij}f\right) \right)
        \right)\nonumber\\
        &\hspace{1cm} + 
        \frac{1}{\sqrt{n}}\int^t_0 \left(\frac{d^{5/2}}{16 \cdot 3^{1/4}}
        \max_{i, j=1, 2, \dots, d}
        \mathrm{Lip}\big(\del_{ij} ({\sf T}_s f)\big) \right)
         \, \dd s, \qquad n \in \N, \,\, t \ge 0.
         \label{Eq:Bernstein-rate}
    \end{align}
\end{tm}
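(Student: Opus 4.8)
The plan is to deduce Theorem~\ref{Thm:Bernstein-1} from Trotter's approximation theorem (Proposition~\ref{Prop:Trotter}) and its quantitative refinement (Proposition~\ref{Prop:Trotter-rate}), applied to the sequence of bounded linear operators $B_{d,n}$ on the Banach space $C(\Delta_{d-1})$. The identification \eqref{Eq:Bernstein-coinidence} is a standard martingale-problem fact: the diffusion $\big({\sf X}_t(\bx)\big)_{t\ge 0}$ solving \eqref{SDE:d-Bernstein} has generator agreeing with $\A_d$ on $C^2(\Delta_{d-1})$ by It\^o's formula together with \eqref{Eq:diffusion-matrix}, and since $C^2(\Delta_{d-1})$ is a core for $\ol{\A_d}$ by Lemma~\ref{Lem:Altomare-Campiti}, the Feller semigroup $({\sf T}_t)_{t\ge 0}$ generated by $\ol{\A_d}$ must coincide with $f\mapsto \E[f({\sf X}_t(\cdot))]$ (uniqueness of the semigroup given the generator). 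So the real content is the convergence of $(B_{d,n})^{\lfloor nt\rfloor}$ to $({\sf T}_t)_{t\ge 0}$.

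The key step is the \emph{Voronovskaya-type theorem}: I would show that $n(B_{d,n}-I)$ converges to $\A_d$ in the sense required by Trotter's theorem, i.e.\ for every $f$ in the core $C^2(\Delta_{d-1})$ one has $\|n(B_{d,n}-I)f-\A_d f\|_\infty\to 0$, and moreover a quantitative bound $\|n(B_{d,n}-I)f-\A_d f\|_\infty \le C n^{-1/2}\max_{i,j}\mathrm{Lip}(\del_{ij}f)$ with the explicit constant $\frac{d^{5/2}}{16\cdot 3^{1/4}}$ when the second derivatives of $f$ are Lipschitz. This is done by Taylor-expanding $f$ to second order around $\bx$ inside the expectation representation \eqref{Eq:d-Bernstein-expectation}: writing $f(\tfrac1n S_n(\bx)) = f(\bx) + \nabla f(\bx)\cdot(\tfrac1n S_n(\bx)-\bx) + \tfrac12\sum_{i,j}\del_{ij}f(\bx)(\tfrac1n S_n(\bx)-\bx)_i(\tfrac1n S_n(\bx)-\bx)_j + R$, and taking expectations. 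The first-order term vanishes since $\E[S_n(\bx)]=n\bx$; the second-order term produces $\tfrac{1}{2n}\sum_{i,j}\del_{ij}f(\bx)\,x_i(\delta_{ij}-x_j)$ because $\mathrm{Cov}(S_n^i(\bx),S_n^j(\bx))=n\,x_i(\delta_{ij}-x_j)$, which is exactly $\tfrac1n\A_d f(\bx)$; and the remainder $R$ is controlled by the Lipschitz modulus of the second derivatives times $\E[\|\tfrac1n S_n(\bx)-\bx\|^3]$, which is $O(n^{-3/2})$ by a moment estimate for the multinomial. Multiplying through by $n$ and carefully tracking the constant from the third absolute central moment of the multinomial (bounding $\E\|\tfrac1nS_n(\bx)-\bx\|^3$ via H\"older and the known formula for $\Var$, so that the dimensional factor $d^{5/2}$ and the numerical factor $16\cdot 3^{1/4}$ emerge) gives the stated rate. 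This moment/constant bookkeeping is the main obstacle — everything else is soft.

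With the Voronovskaya estimate in hand, the passage to semigroups is essentially the telescoping identity
\[
(B_{d,n})^{\lfloor nt\rfloor}f - {\sf T}_t f
= \sum_{k=0}^{\lfloor nt\rfloor-1}(B_{d,n})^{k}\big(B_{d,n}-{\sf T}_{1/n}\big){\sf T}_{t-(k+1)/n}f \;+\;\big({\sf T}_{\lfloor nt\rfloor/n}-{\sf T}_t\big)f,
\]
combined with the contraction property $\|(B_{d,n})^k\|\le 1$ (clear from the probabilistic representation \eqref{Eq:d-Bernstein-iterates}) and $\|{\sf T}_s\|\le 1$. Bounding $\|(B_{d,n}-{\sf T}_{1/n})g\|_\infty \le \|(B_{d,n}-I)g - \tfrac1n\A_d g\|_\infty + \|({\sf T}_{1/n}-I)g-\tfrac1n\A_d g\|_\infty$, the first summand is handled by the quantitative Voronovskaya bound applied to $g={\sf T}_{t-(k+1)/n}f$ (here assumptions {\bf (A1)}--{\bf (A2)} guarantee ${\sf T}_s f\in C^2$ with Lipschitz second derivatives, so the bound applies and produces the integral $\int_0^t \max_{i,j}\mathrm{Lip}(\del_{ij}({\sf T}_s f))\,\dd s$ after summing the Riemann sum), and the second is the standard Taylor estimate $\|({\sf T}_{s}-I)g - s\A_d g\|_\infty \le \tfrac{s^2}{2}\sup_{0\le u\le s}\|\A_d^2{\sf T}_u g\|$ — actually, to get the clean form in \eqref{Eq:Bernstein-rate} it is cheaper to bound $\|({\sf T}_{1/n}-I)g-\tfrac1n\A_d g\|_\infty\le \tfrac1n\|\A_d g\|_\infty + \tfrac1n\|\A_d g\|_\infty$ crudely via $\|({\sf T}_{1/n}-I)g\|\le \tfrac1n\|\A_d g\|$, which is where the $\big(\tfrac{t}{\sqrt n}+\tfrac1n\big)\|\A_d f\|_\infty$-type term comes from after using ${\sf T}_s\A_d f=\A_d{\sf T}_s f$ and $\|{\sf T}_s\|\le 1$. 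The last term $\|({\sf T}_{\lfloor nt\rfloor/n}-{\sf T}_t)f\|_\infty\le \tfrac1n\|\A_d f\|_\infty$ accounts for the discretization of time. For the qualitative statement (without {\bf (A1)},{\bf (A2)}) one simply invokes Proposition~\ref{Prop:Trotter} directly, using that $C^2(\Delta_{d-1})$ is a core and that $n(B_{d,n}-I)f\to\A_d f$ pointwise on that core. I expect the delicate points to be: (i) the explicit constant in the third-moment bound for the multinomial, and (ii) assembling the telescoping sum so that the coefficients match \eqref{Eq:Bernstein-rate} exactly rather than merely up to a constant; both are mechanical once the structure above is fixed.
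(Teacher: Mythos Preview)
Your overall strategy---prove a Voronovskaya estimate and feed it into Trotter's approximation theorem---matches the paper exactly, and your treatment of the Voronovskaya part (Taylor expansion inside \eqref{Eq:d-Bernstein-expectation}, multinomial covariance, control of the third moment via the fourth) is essentially the paper's Lemma~\ref{Lem:Voronovskaya-Berstein}. For the identification \eqref{Eq:Bernstein-coinidence} you invoke It\^o's formula together with the core property, which is fine; the paper instead verifies the Stroock--Varadhan diffusion-approximation criteria for the Markov chain $\{H_n^N(\bx)\}$ directly (conditional first and second moments plus a Hoeffding bound), obtaining weak convergence of the chain to the SDE solution and then matching that against the already-established Trotter limit. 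Either route works.

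There is, however, a real gap in your derivation of the explicit rate \eqref{Eq:Bernstein-rate}. In your telescoping you must control $\|({\sf T}_{1/n}-I)g-\tfrac{1}{n}\A_d g\|_\infty$ for $g={\sf T}_s f$. Your ``crude'' bound $\le \tfrac{2}{n}\|\A_d g\|_\infty$ is fatal: summed over the $\lfloor nt\rfloor$ steps it contributes a term of order $t\,\|\A_d f\|_\infty$, which does \emph{not} vanish as $n\to\infty$, so no rate results. Your alternative Taylor bound $\tfrac{1}{2n^2}\sup_u\|\A_d^2{\sf T}_u g\|$ would require $f\in\Dom(\A_d^2)$, which {\bf (A1)}--{\bf (A2)} do not supply. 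The paper avoids this entirely by citing Proposition~\ref{Prop:Trotter-rate} as a black box; the proof of that result (in \cite{CT,Namba}) does not proceed by your step-by-step comparison of $B_{d,n}$ with ${\sf T}_{1/n}$, but rather first compares $(B_{d,n})^{\lfloor nt\rfloor}$ with the auxiliary semigroup $\exp\!\big((\lfloor nt\rfloor/n)\,n(B_{d,n}-I)\big)$ via a Chernoff-type contraction inequality $\|V^N h-e^{N(V-I)}h\|\le\sqrt{N}\,\|(V-I)h\|$---this is where the $\sqrt{t/n}\,\varphi_n(f)$ term actually originates---and only then compares two genuine semigroups via a Duhamel formula, producing the integral $\int_0^t\psi_n({\sf T}_s f)\,\dd s$. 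Your telescoping never sees this $\sqrt{N}$ factor and cannot recover the stated bound; the issue is structural, not just bookkeeping of constants.
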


We denote by $C_{\bx}([0, 1]; \, \R^d)$ 
the set of all continuous paths
$w : [0, 1] \to \R^d$ satisfying $w(0)=\bx \in \Delta_{d-1}$
with the uniform topology. 
For $\alpha \in (0, 1)$, we also denote by 
$C_{\bx}^{\alpha\text{{\rm -H\"ol}}}([0, 1]; \, \R^d)$
the set of all $\alpha$-H\"older continuous paths 
$w : [0, 1] \to \R^d$ with $w(0)=\bx \in \Delta_{d-1}$.
Let us consider the random curve defined by 
$t \longmapsto H_n^{\lfloor nt \rfloor}(\bx)$ for $\bx \in \Delta_{d-1}$. 
We then set 
\begin{equation}
\label{LI:Bernstein}
    \mathcal{H}_t^{(n)}(\bx)
    := 
    H^{\lfloor nt \rfloor}_n(\x) 
    + (n t - \lfloor nt \rfloor)
    \left(H^{\lfloor nt \rfloor + 1}_n(\x) 
    - H^{\lfloor nt \rfloor}_n (\x) \right)
\end{equation}
for $0 \le t \le 1$ and $\bx \in \Delta_{d-1}$, that is, 
each $\mathcal{H}_\cdot^{(n)}(\bx)$ 
is a $C_{\bx}([0, 1]; \, \R^d)$-valued 
random variable
constructed via the linear interpolation of 
the Markov chain
$\{H_n^N\}_{N=0}^\infty$.

The following is the functional limit theorem for 
the Markov chain induced by the $d$-dimensional
Bernstein operator $B_{d,n}$, which gives a much stronger convergence
than that obtained in Theorem \ref{Thm:Bernstein-1}.

\begin{tm}
\label{Thm:Bernstein-2}
The sequence $\big(\mathcal{H}_t^{(n)}(\bx)\big)_{0 \le t \le 1}$, $n=1, 2, 3, \dots$, 
of continuous stochastic processes 
converges in law to the diffusion process 
$\big(\mathsf{X}_t(\bx)\big)_{0 \le t \le 1}$ 
in $C_{\bx}^{\alpha\text{{\rm -H\"ol}}}([0, 1], \R^d)$
for all $\alpha<1/2$. 
\end{tm}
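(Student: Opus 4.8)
\textbf{Proof proposal for Theorem \ref{Thm:Bernstein-2}.}
The plan is to establish weak convergence in $C_{\bx}^{\alpha\text{-H\"ol}}([0,1],\R^d)$ by the standard two-step programme: first show tightness of the laws $\{\mathbb{P}_n\}_{n=1}^\infty$ of $\big(\mathcal{H}_t^{(n)}(\bx)\big)_{0\le t\le 1}$ in that H\"older space, and then identify every subsequential limit as the law of the Wright--Fisher diffusion $\big(\mathsf{X}_t(\bx)\big)_{0\le t\le 1}$ solving \eqref{SDE:d-Bernstein}. For the identification step, Theorem \ref{Thm:Bernstein-1} already does most of the work: the convergence of the finite-dimensional distributions of $\mathcal{H}^{(n)}$ to those of $\mathsf{X}$ follows from $(B_{d,n})^{\lfloor nt\rfloor}f\to \E[f(\mathsf{X}_t(\cdot))]$ applied to products of test functions through the Markov property (the linear-interpolation correction term $(nt-\lfloor nt\rfloor)(H_n^{\lfloor nt\rfloor+1}-H_n^{\lfloor nt\rfloor})$ is $O(1/n)$ in probability and hence negligible), together with the uniqueness in law of the solution to \eqref{SDE:d-Bernstein} — the latter being guaranteed since $\mathcal{A}_d$ generates the semigroup $({\sf T}_t)_{t\ge 0}$ by Lemma \ref{Lem:Altomare-Campiti}, so the martingale problem for $\mathcal{A}_d$ is well-posed. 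Convergence of finite-dimensional distributions plus tightness in $C^{\alpha\text{-H\"ol}}$ then yields weak convergence in that space.

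The core of the argument is therefore the H\"older tightness. I would use the Kolmogorov--Chentsov / Lamperti-type criterion: a family of processes on $[0,1]$ with a common starting point is tight in $C^{\alpha\text{-H\"ol}}$ for every $\alpha<1/2$ provided one has a moment bound of the form
\begin{equation*}
\E\big[\,\|\mathcal{H}_t^{(n)}(\bx)-\mathcal{H}_s^{(n)}(\bx)\|^{2p}\,\big]\le C_p\,|t-s|^{p}
\end{equation*}
uniformly in $n$, for some $p>1$ (indeed for all $p\in\N$). Here the martingale property from Proposition \ref{prop:martingale}(1) is decisive: $\{H_n^N(\bx)\}_{N=0}^\infty$ is an $\mathcal{I}$-valued martingale whose one-step increments are of size $O(1/\sqrt n)$ with conditional covariance $\tfrac1n x_i(\delta_{ij}-x_j)$ evaluated at the current state, which is uniformly bounded on $\Delta_{d-1}$. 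Hence for indices spanning $m\approx n|t-s|$ steps, the discrete Burkholder--Davis--Gundy inequality gives
\begin{equation*}
\E\big[\,\|H_n^{\lfloor nt\rfloor}(\bx)-H_n^{\lfloor ns\rfloor}(\bx)\|^{2p}\,\big]
\le C_p\,\E\Big[\Big(\sum_{N=\lfloor ns\rfloor}^{\lfloor nt\rfloor-1}\|H_n^{N+1}-H_n^N\|^2\Big)^{p}\Big]
\le C_p\Big(\tfrac{\lfloor nt\rfloor-\lfloor ns\rfloor}{n}\Big)^{p},
\end{equation*}
the last bound because each squared increment has conditional expectation $O(1/n)$, uniformly; the linear-interpolation term only changes constants. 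This handles $|t-s|\gtrsim 1/n$; for $|t-s|\lesssim 1/n$ one bounds the interpolant increment directly by a single jump, again $O(1/\sqrt n)=O(\sqrt{|t-s|})$ in $L^{2p}$. Combining the two regimes gives the required estimate for all $s,t\in[0,1]$.

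The main obstacle, and the point deserving the most care, is making the discrete BDG / moment estimate genuinely uniform in $n$ and in the starting point $\bx$, i.e.\ controlling the higher conditional moments $\E[\|H_n^{N+1}-H_n^N\|^{2p}\mid \mathcal{F}_N]$ of the multinomial increments. One must verify that these are $O(n^{-p})$ with a constant depending only on $p$ and $d$ (not on $\bx$ or $N$), which follows from standard centred-multinomial moment bounds since $nH_n^{N+1}$ given $H_n^N=\mathbf{i}/n$ is multinomial$(n,\mathbf{i}/n)$ with all parameters in $[0,1]$; the contraction property (each coordinate stays in $[0,1]$, the sum stays $1$) keeps everything on the compact simplex, so no unbounded-coefficient issues arise. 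A secondary technical point is that $C^{\alpha\text{-H\"ol}}([0,1];\R^d)$ is not separable, so tightness must be argued via the compact embedding $C^{\beta\text{-H\"ol}}\hookrightarrow\!\!\hookrightarrow C^{\alpha\text{-H\"ol}}$ for $\alpha<\beta<1/2$ together with the uniform $L^{2p}$-modulus estimate above (the usual Arzelà--Ascoli-with-H\"older-seminorm argument); once tightness is in place, the identification of the limit via finite-dimensional distributions and well-posedness of the $\mathcal{A}_d$-martingale problem completes the proof.
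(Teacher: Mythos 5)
Your proposal is correct and follows essentially the same route as the paper: finite-dimensional convergence is read off from Theorem \ref{Thm:Bernstein-1}, and tightness in $C_{\bx}^{\alpha\text{-H\"ol}}$ is obtained from the uniform moment bound $\E[\|\mathcal{H}_t^{(n)}-\mathcal{H}_s^{(n)}\|^{2\beta}]\le C(t-s)^{\beta}$, proved via the martingale property of Proposition \ref{prop:martingale}, the Burkholder--Davis--Gundy inequality, the centred multinomial moment estimates of Theorem \ref{Thm:binomial-moment}, and a separate treatment of the sub-grid regime coming from the linear interpolation. Your additional remarks on uniformity of the conditional increment moments in the starting point and on the compact embedding of H\"older spaces are sound refinements of details the paper passes over quickly, but they do not change the argument.
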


\subsection{Limit theorems for iterates of $B_{d, n}^{(\mathbf{q}_n)}$}

Let $\mathbf{q}_n=\{q_{ij}^{(n)}\}_{i,j=1}^d$ be real numbers 
satisfying {\bf (Q1)}. 
We next present 
the limit theorems for the iterates
of \eqref{Eq:q-Bernstein} associated with $\mathbf{q}_n$. 
Let $\mathcal{A}_d^{(\mathbf{q})}$ be the second order differential operator 
acting on $C^2(\Delta_{d-1})$ defined by 
    \begin{equation}\label{Eq:generator-Bernstein-general}
    \mathcal{A}_d^{(\mathbf{q})}f(\bx):=
    \frac{1}{2}\sum_{i, j=1}^d x_i
    (\delta_{ij}-x_j)\frac{\del^2f}{\del x_i \del x_j}(\bx)
    +\sum_{i=1}^d \left(\sum_{j=1}^d q_{ji}x_j\right)\frac{\del f}{\del x_i}
    \end{equation}
for $f \in C^2(\Delta_{d-1})$ and $\bx \in \Delta_{d-1}$, 
where $\mathbf{q}=\{q_{ij}\}_{i,j=1}^d$ is a set of real numbers as in Assumption {\bf (Q1)}. 
We should note that 
some extensions of this kind of differential operators
have already been discussed in e.g.,~\cite{ACD}. 
Particularly, we have the following property 
for our operator $\A_d^{(\mathbf{q})}$, similarly to Lemma \ref{Lem:Altomare-Campiti}.

\begin{lm}[see {\cite[Theorem 4.1]{ACD}}]
\label{Lem:core-general}
The closure of 
$\big(\mathcal{A}_d^{(\mathbf{q})}, C^2(\Delta_{d-1})\big)$
generates a contraction $C_0$-semigroup 
$({\sf T}^{(\mathbf{q})}_t)_{t \ge 0}$ 
on $C(\Delta_{d-1})$
and $C^2(\Delta_{d-1})$ is a core for the closure. 
\end{lm}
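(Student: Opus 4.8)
The statement is cited from \cite[Theorem 4.1]{ACD}, so the plan is to reduce it to the general theory developed there for degenerate elliptic operators on convex compact sets. The operator $\mathcal{A}_d^{(\mathbf{q})}$ in \eqref{Eq:generator-Bernstein-general} is a second order differential operator of the form $\frac{1}{2}\sum_{i,j}a_{ij}(\bx)\del_{ij}f+\sum_i b_i(\bx)\del_i f$ with diffusion coefficients $a_{ij}(\bx)=x_i(\delta_{ij}-x_j)$ and drift $b_i(\bx)=\sum_j q_{ji}x_j$. First I would record the structural features that make the cited theorem applicable: (i) the coefficient matrix $(a_{ij}(\bx))_{i,j=1}^d$ is symmetric, positive semidefinite on $\Delta_{d-1}$ (it is the covariance matrix of a multinomial-type random vector, as already noted around \eqref{Eq:diffusion-matrix}), and vanishes in the normal direction on each face of the simplex, so $\mathcal{A}_d^{(\mathbf{q})}$ is degenerate elliptic with no boundary noise leaking out of $\Delta_{d-1}$; (ii) the drift $b(\bx)=(\sum_j q_{j1}x_j,\dots,\sum_j q_{jd}x_j)$ points inward (or is tangent) along $\partial\Delta_{d-1}$ — this is exactly where condition \eqref{Eq:cond-q}, $q_{ij}>0$ for $i\ne j$ and $q_{ii}=-\sum_{j\ne i}q_{ij}$, enters, since on the face $\{x_i=0\}$ one has $b_i(\bx)=\sum_{j\ne i}q_{ji}x_j\ge 0$, so the vector field does not push trajectories out of the simplex; and (iii) $\sum_i b_i(\bx)=\sum_i\sum_j q_{ji}x_j=\sum_j x_j\sum_i q_{ji}=0$ by the column-sum condition built into \eqref{Eq:cond-q}, so the drift is tangent to the hyperplane $|\bx|=1$ and $\Delta_{d-1}$ is genuinely invariant.

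Given these three points, the plan is simply to invoke the Hille--Yosida--Ray--type result of Altomare--Campiti--Diomede: for such an operator on a compact convex subset of $\R^d$, the closure of $(\mathcal{A}_d^{(\mathbf{q})},C^2(\Delta_{d-1}))$ is the generator of a Feller (hence contraction $C_0$-) semigroup, and $C^2(\Delta_{d-1})$ is a core. Concretely, one checks the hypotheses of \cite[Theorem 4.1]{ACD}: $\mathcal{A}_d^{(\mathbf{q})}\bm{1}=0$ (the constants are killed), the operator satisfies the positive maximum principle on $C^2(\Delta_{d-1})$ — this again uses the inward-pointing drift on the boundary together with degeneracy of the diffusion there — and the range condition $\overline{(\lambda-\mathcal{A}_d^{(\mathbf{q})})(C^2(\Delta_{d-1}))}=C(\Delta_{d-1})$ for some (equivalently all) $\lambda>0$, which is the substantive analytic input supplied by \cite{ACD} via their construction of a Bernstein--Schnabl-type approximation on the simplex. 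Since the lemma merely asserts what is proved there, I would not reprove the range condition; I would cite it.

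The main obstacle — and the only genuine content beyond citation — is verifying the positive maximum principle, i.e.\ that if $f\in C^2(\Delta_{d-1})$ attains its maximum at $\bx_0$ then $\mathcal{A}_d^{(\mathbf{q})}f(\bx_0)\le 0$. In the interior this is classical (the Hessian is negative semidefinite and contracts against the positive semidefinite $(a_{ij})$, while $\nabla f(\bx_0)$ restricted to the tangent space vanishes). The delicate case is $\bx_0\in\partial\Delta_{d-1}$: there the gradient need not vanish in the inward normal directions of the faces, but one shows the drift term $\sum_i b_i(\bx_0)\del_i f(\bx_0)\le 0$ precisely because on each face $\{x_i=0\}$ the $i$-th drift component is nonnegative while the constrained maximality of $f$ forces $\del_i f(\bx_0)\le 0$ along the inward direction $e_i$; simultaneously the diffusion part degenerates in exactly those directions where the boundary gradient lives. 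Assembling this carefully over all the relative faces of the simplex is the one step I would write out in detail, after which Lemma \ref{Lem:core-general} follows from \cite[Theorem 4.1]{ACD}. I should also remark that in the special case $\mathbf{q}\equiv 0$ this recovers Lemma \ref{Lem:Altomare-Campiti}, consistent with \cite{Alt-C,Ethier}.
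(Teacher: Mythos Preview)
Your proposal is correct and matches the paper's approach: the paper gives no proof of this lemma, simply citing \cite[Theorem 4.1]{ACD}, and your plan is precisely to invoke that theorem after verifying its hypotheses. One small slip: in point (iii) you call $\sum_i q_{ji}=0$ a ``column-sum condition,'' but it is the row-sum condition from \eqref{Eq:cond-q}; the computation itself is correct.
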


The following is a generalization of Theorem \ref{Thm:Bernstein-1}
to the case where we take the effect of mutations
among alleles into account.

\begin{tm}
\label{Thm:Bernstein-general-1}
        We assume {\bf (Q1)}.
        For $ \bx \in \Delta_{d-1}, $
        let 
        \[ 
        \big(\mathsf{X}_t^{(\mathbf{q})}(\bx)\big)_{t \ge 0}=
        \big(\mathsf{X}_t^{(\mathbf{q}), 1}(\bx), 
        \mathsf{X}_t^{(\mathbf{q}), 2}(\bx), 
        \dots, \mathsf{X}_t^{(\mathbf{q}), d}(\bx)\big)_{t \ge 0}
        \]
        be the 
        $d$-dimensional diffusion process
        which solves the stochastic differential equation
        \begin{equation*}
        \label{SDE:d-Bernstein-general}
            \dd \mathsf{X}_t^{(\mathbf{q}), i}(\bx)=\sum_{j=1}^d
            \sigma_{ij}\big(\mathsf{X}_t^{(\mathbf{q})}(\bx)\big) \, \dd W_t^j
            + 
            \sum_{j=1}^d q_{j i} \mathsf{X}_t^{(\mathbf{q}), j}(\bx) 
            \, \dd t,
        \end{equation*}
        for $i=1, 2, \dots, d$, with 
        $\mathsf{X}_0^{(\mathbf{q})}(\bx)=\bx$, 
        where $\sigma=(\sigma_{ij})_{i, j=1}^d : \Delta_{d-1} \to \R^d \otimes \R^d$ is defined by \eqref{Eq:diffusion-matrix} and 
        $ (W_t)_{t \ge 0}=(W_t^1, W_t^2, \dots, W_t^d)_{t \ge 0} $ 
        is a $d$-dimensional standard Brownian motion. 
        Then, we have 
        \begin{equation*}
        {\sf T}_t^{(\mathbf{q})} f(\x)
        =\E\big[f\big({\sf X}_t^{(\mathbf{q})}(\x)\big)\big], 
        \qquad f \in C(\Delta_{d-1}), \, 
        \x \in \Delta_{d-1}, \, t \ge 0,
        \end{equation*}
        and 
        \begin{equation}
            \lim_{n \to \infty}
            \left\|(B_{d,n}^{(\mathbf{q}_n)})^{\lfloor nt \rfloor}f 
            - \E\big[f\big({\sf X}_t^{(\mathbf{q})}(\cdot)\big)\big]\right\|_\infty=0
            , \qquad f \in C(\Delta_{d-1}), \, t \ge 0.
            \label{Eq:Bernstein-convergence-general}
        \end{equation}
   Moreover, if $f \in C^2(\Delta_{d-1})$ satisfies that
        
        \vspace{1mm}
        \noindent
        {\bf (A3):} ${\sf T}_t^{(\mathbf{q})} f \in C^2(\Delta_{d-1}), \, t \ge 0$, and
        
        \vspace{1mm}
        \noindent
        {\bf (A4):} 
        $\del_{ij}f$ and $\del_{ij}({\sf T}_t^{(\mathbf{q})} f)$, $i, j=1, 2, \dots, d$, 
        are Lipschitz continuous on $\Delta_{d-1}$, 

        \vspace{1mm}
        \noindent
        then we have 
        \begin{align}
        &\left\|(B_{d,n}^{(\mathbf{q}_n)})^{\lfloor nt \rfloor}f 
            - \E\big[f\big({\sf X}_t^{(\mathbf{q})}(\cdot)\big)\big]\right\|_\infty
        \le \frac{C}{\sqrt{n}},
         \qquad n \in \N, \,\, t \ge 0,
         \label{Eq:Bernstein-rate-general}
    \end{align}
    for some $C>0$ which depends on 
    $d$, $t$, $\|\mathcal{A}_d^{(\mathbf{q})}f\|_\infty$, $\|\del_i f\|_\infty$, 
    $\|\del_{ij}f\|_\infty$, 
    $\mathrm{Lip}(\del_{ij}f)$ 
    and 
    $\mathrm{Lip}(\del_{ij}{\sf T}_t^{(\mathbf{q})}(f))$ 
    for 
    $i, j=1, 2, \dots, d$. 
\end{tm}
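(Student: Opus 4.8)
The plan is to follow the two-step scheme announced in the introduction. First I would identify the limiting semigroup $({\sf T}_t^{(\mathbf{q})})_{t\ge 0}$ with the transition semigroup of the diffusion \eqref{SDE:d-Bernstein-q}: by It\^o's formula, any solution $({\sf X}_t^{(\mathbf{q})}(\bx))_{t\ge 0}$ of that stochastic differential equation makes $f({\sf X}_t^{(\mathbf{q})}(\bx))-f(\bx)-\int_0^t \mathcal{A}_d^{(\mathbf{q})}f({\sf X}_s^{(\mathbf{q})}(\bx))\,\dd s$ a martingale for every $f\in C^2(\Delta_{d-1})$, since the diffusion matrix fixed by \eqref{Eq:diffusion-matrix} together with the drift $\big(\sum_{j}q_{ji}x_j\big)_i$ reconstitutes precisely the generator \eqref{Eq:generator-Bernstein-general}; hence $({\sf X}_t^{(\mathbf{q})}(\bx))_{t\ge 0}$ solves the $\mathcal{A}_d^{(\mathbf{q})}$-martingale problem started at $\bx$. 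Because Lemma~\ref{Lem:core-general} guarantees that the closure of $(\mathcal{A}_d^{(\mathbf{q})},C^2(\Delta_{d-1}))$ generates a Feller semigroup, this martingale problem is well posed, so the transition function of $({\sf X}_t^{(\mathbf{q})})$ coincides with $({\sf T}_t^{(\mathbf{q})})$, which is the asserted identity ${\sf T}_t^{(\mathbf{q})}f(\bx)=\E[f({\sf X}_t^{(\mathbf{q})}(\bx))]$. (Existence of a solution staying in $\Delta_{d-1}$, with $\sigma$ a measurable square root of the Wright--Fisher diffusion matrix, is classical, see \cite{EK, Ethier}; alternatively one argues exactly as for \eqref{Eq:Bernstein-coinidence} in Theorem~\ref{Thm:Bernstein-1}.)

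The core analytic input is a Voronovskaya-type estimate: for every $f\in C^2(\Delta_{d-1})$ one has $\|n(B_{d,n}^{(\mathbf{q}_n)}-I)f-\mathcal{A}_d^{(\mathbf{q})}f\|_\infty\to 0$, with rate $O(n^{-1/2})$ when in addition each $\del_{ij}f$ is Lipschitz. To see this, write $B_{d,n}^{(\mathbf{q}_n)}f(\bx)=\E[f(\tfrac{1}{n}S_n(\bx^{(\mathbf{q}_n)}))]$, Taylor-expand $f$ to second order about $\bx$, and use the multinomial moments $\E[\tfrac{1}{n}S_n^i(x_i^{(\mathbf{q}_n)})]=x_i^{(\mathbf{q}_n)}$ and $\E[(\tfrac{1}{n}S_n^i(x_i^{(\mathbf{q}_n)})-x_i^{(\mathbf{q}_n)})(\tfrac{1}{n}S_n^j(x_j^{(\mathbf{q}_n)})-x_j^{(\mathbf{q}_n)})]=\tfrac{1}{n}x_i^{(\mathbf{q}_n)}(\delta_{ij}-x_j^{(\mathbf{q}_n)})$. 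The first-order term becomes $n\sum_i\del_i f(\bx)(x_i^{(\mathbf{q}_n)}-x_i)=\sum_i\del_i f(\bx)\sum_j (nq_{ji}^{(n)})x_j$, which converges to $\sum_i\del_i f(\bx)\sum_j q_{ji}x_j$ because the second bullet of {\bf (Q1)} gives $|nq_{ji}^{(n)}-q_{ji}|\le Cn^{1-\gamma}\to 0$; the second-order term, after absorbing the $O(n^{-2})$ error coming from $x_i^{(\mathbf{q}_n)}-x_i=O(n^{-1})$, converges to $\tfrac{1}{2}\sum_{i,j}x_i(\delta_{ij}-x_j)\del_{ij}f(\bx)$; together these give $\mathcal{A}_d^{(\mathbf{q})}f(\bx)$, while the Taylor remainder is dominated by $n\,\E[\omega(\|\tfrac{1}{n}S_n(\bx^{(\mathbf{q}_n)})-\bx\|)\,\|\tfrac{1}{n}S_n(\bx^{(\mathbf{q}_n)})-\bx\|^2]$, $\omega$ being the joint modulus of continuity of the $\del_{ij}f$, which tends to $0$ uniformly in $\bx$ by the weak law of large numbers since $n\,\E[\|\tfrac{1}{n}S_n(\bx^{(\mathbf{q}_n)})-\bx\|^2]$ is bounded. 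If the $\del_{ij}f$ are Lipschitz, $\omega(r)\le Cr$ and the third multinomial central moment yields $n\,\E[\|\tfrac{1}{n}S_n(\bx^{(\mathbf{q}_n)})-\bx\|^3]=O(n^{-1/2})$, which, together with the $O(n^{1-\gamma})$ contribution from {\bf (Q1)}, gives the claimed rate.

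With this in hand the passage to the iterates is essentially automatic. For $n$ large enough that $\bx^{(\mathbf{q}_n)}\in\Delta_{d-1}$ (which follows from {\bf (Q1)} and $\sum_i q_{ij}^{(n)}=0$), the operators $B_{d,n}^{(\mathbf{q}_n)}$ are positive and fix the constants, $B_{d,n}^{(\mathbf{q}_n)}\bm{1}=\bm{1}$, hence are contractions on $C(\Delta_{d-1})$. Combining this, the Voronovskaya convergence on the core $C^2(\Delta_{d-1})$, and Lemma~\ref{Lem:core-general}, Trotter's approximation theorem (Proposition~\ref{Prop:Trotter}) yields $\|(B_{d,n}^{(\mathbf{q}_n)})^{k_n}f-{\sf T}_t^{(\mathbf{q})}f\|_\infty\to 0$ whenever $k_n/n\to t$, in particular for $k_n=\lfloor nt\rfloor$, which is \eqref{Eq:Bernstein-convergence-general}. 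For the rate, Proposition~\ref{Prop:Trotter-rate} reduces $\|(B_{d,n}^{(\mathbf{q}_n)})^{\lfloor nt\rfloor}f-{\sf T}_t^{(\mathbf{q})}f\|_\infty$ to a boundary term of order $|t-\lfloor nt\rfloor/n|\le n^{-1}$ times $\|\mathcal{A}_d^{(\mathbf{q})}f\|_\infty$ plus $\int_0^t \|n(B_{d,n}^{(\mathbf{q}_n)}-I){\sf T}_s^{(\mathbf{q})}f-\mathcal{A}_d^{(\mathbf{q})}{\sf T}_s^{(\mathbf{q})}f\|_\infty\,\dd s$; assumptions {\bf (A3)} and {\bf (A4)} make ${\sf T}_s^{(\mathbf{q})}f$ admissible for the quantitative Voronovskaya bound above, so each integrand is $\le Cn^{-1/2}$ and the whole quantity is $\le Cn^{-1/2}$ with $C$ depending on $d$, $t$ and the norms and Lipschitz constants listed in the statement, which is \eqref{Eq:Bernstein-rate-general}.

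The main obstacle I expect is the uniform-in-$s$ control needed in the last step: the Voronovskaya constant for the argument function ${\sf T}_s^{(\mathbf{q})}f$ involves $\|\del_{ij}({\sf T}_s^{(\mathbf{q})}f)\|_\infty$ and $\mathrm{Lip}(\del_{ij}({\sf T}_s^{(\mathbf{q})}f))$, so one must dominate $\sup_{0\le s\le t}$ of these by a constant expressible through the data at time $t$. This requires regularity estimates for the Wright--Fisher-with-mutation semigroup $({\sf T}_s^{(\mathbf{q})})$ on $[0,t]$, and this is where the bulk of the technical work lies; everything else --- the multinomial moment computations, the Taylor remainders, and the telescoping identity underlying Trotter's theorem --- is routine and parallels the $\mathbf{q}=0$ case of Theorem~\ref{Thm:Bernstein-1}.
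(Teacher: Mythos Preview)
Your proposal is correct and tracks the paper's proof closely: both establish the Voronovskaya-type estimate for $B_{d,n}^{(\mathbf{q}_n)}$ (this is Lemma~\ref{Lem:Voronovskaya-Berstein-genreal} in the paper, proved exactly as you outline), then invoke Propositions~\ref{Prop:Trotter} and~\ref{Prop:Trotter-rate} together with Lemma~\ref{Lem:core-general}. The one genuine difference is the identification of $({\sf T}_t^{(\mathbf{q})})_{t\ge 0}$ with the diffusion semigroup of \eqref{SDE:d-Bernstein-q}: you argue via It\^o's formula and well-posedness of the $\mathcal{A}_d^{(\mathbf{q})}$-martingale problem, whereas the paper instead verifies the Stroock--Varadhan drift/diffusion/tightness conditions \cite[Lemma~11.2.3]{SV} directly on the Markov chain $\{(H_n^{(\mathbf{q}_n)})^N\}$ (see \eqref{DifApp-general-A}--\eqref{DifApp-general-C}) to obtain weak convergence of the chain to the SDE solution, and then matches this against the Trotter limit. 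Your route is slightly more economical; the paper's route has the side benefit of giving an independent proof of weak convergence of the chain, which feeds into Theorem~\ref{Thm:Bernstein-general-2}.

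Your closing worry is unnecessary. Proposition~\ref{Prop:Trotter-rate} delivers the bound $\sqrt{t/n}\,\varphi_n(f)+n^{-1}\varphi_n(f)+\int_0^t\psi_n({\sf T}_s^{(\mathbf{q})}f)\,\dd s$, and the statement of Theorem~\ref{Thm:Bernstein-general-1} explicitly allows $C$ to depend on $t$ and on $\mathrm{Lip}(\del_{ij}{\sf T}_t^{(\mathbf{q})}f)$; read in light of the explicit estimate \eqref{Eq:Bernstein-rate} in the $\mathbf{q}=0$ case, this dependence is precisely through the integral $\int_0^t\mathrm{Lip}(\del_{ij}{\sf T}_s^{(\mathbf{q})}f)\,\dd s$, which is finite under {\bf (A3)}--{\bf (A4)}. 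No additional regularity theory for the semigroup is required, and the paper does not supply any---it simply applies Proposition~\ref{Prop:Trotter-rate} and stops. (Minor point: the ``boundary term'' coming from Proposition~\ref{Prop:Trotter-rate} is of order $\sqrt{t/n}$, not $n^{-1}$, but this does not affect the final $O(n^{-1/2})$ conclusion.)
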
    

\noindent
Note that the positive constant $C>0$ in the right-hand-side of 
\eqref{Eq:Bernstein-rate-general} is no longer written down explicitly 
since we need to use Assumption {\bf (Q1)} to give the estimate.

In the same manner as \eqref{LI:Bernstein}, we define
\begin{align}
    (\mathcal{H}_t^{(\mathbf{q_n})})^{(n)}(\bx)
    &:= 
    (H_n^{(\mathbf{q}_n)})^{\lfloor nt \rfloor}(\x) 
    + (n t - \lfloor nt \rfloor) \notag \\
    &\hspace{1cm}\times 
    \left\{(H_n^{(\mathbf{q}_n)})^{\lfloor nt \rfloor + 1}(\x) 
    - (H_n^{(\mathbf{q}_n)})^{\lfloor nt \rfloor} (\x) \right\}
    -nt(\bx^{(\mathbf{q}_n)}-\bx)
    \label{LI:Bernstein-general}
\end{align}
for $0 \le t \le 1$ and $\bx \in \Delta_{d-1}$, 
which gives a sequence of $C_{\bx}([0, 1]; \, \R^d)$-valued 
random variables 
constructed by 
the Markov chain
$\{(H_n^{(\mathbf{q}_n)})^N(\bx)\}_{N=0}^\infty$. 
Then, we also have the functional limit theorem
for the sequence 
$
\{(\mathcal{H}_\cdot^{(\mathbf{q}_n)})^{(n)}\}
_{n=1}^\infty$. 

\begin{tm}
\label{Thm:Bernstein-general-2}
The sequence $\big((\mathcal{H}_t^{(\mathbf{q}_n)})^{(n)}(\bx)\big)_{0 \le t \le 1}$, $n=1, 2, 3, \dots$, 
of continuous stochastic processes 
converges in law to the diffusion process 
$\big(\mathsf{X}_t^{(\mathbf{q})}(\bx)\big)_{0 \le t \le 1}$ 
in $C_{\bx}^{\alpha\text{{\rm -H\"ol}}}([0, 1], \R^d)$
for all $\alpha<1/2$. 
\end{tm}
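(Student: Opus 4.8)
The plan is to argue exactly as for Theorem \ref{Thm:Bernstein-2}: establish (i) convergence of the finite-dimensional distributions and (ii) tightness, but now in the stronger topology of $C_{\bx}^{\alpha\text{-H\"ol}}([0,1],\R^d)$. Fix $\bx \in \Delta_{d-1}$ and abbreviate $X^N := (H_n^{(\mathbf{q}_n)})^N(\bx)$. For (i), note that $\{X^N\}_{N\ge0}$ is a time-homogeneous Markov chain whose $N$-step transition operator is precisely $(B_{d,n}^{(\mathbf{q}_n)})^N$, so that for $0\le t_1<\cdots<t_m\le1$ the joint law of $(X^{\lfloor nt_1\rfloor},\dots,X^{\lfloor nt_m\rfloor})$ is obtained by successively applying the operators $(B_{d,n}^{(\mathbf{q}_n)})^{\lfloor nt_{\ell+1}\rfloor-\lfloor nt_\ell\rfloor}$ to test functions. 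Since $\lfloor nt_{\ell+1}\rfloor-\lfloor nt_\ell\rfloor = n(t_{\ell+1}-t_\ell)+O(1)$, an inductive use of \eqref{Eq:Bernstein-convergence-general} in Theorem \ref{Thm:Bernstein-general-1}, together with the strong continuity and contractivity of $({\sf T}_t^{(\mathbf{q})})_{t\ge0}$, shows these compositions converge uniformly to the corresponding compositions of ${\sf T}_{t_{\ell+1}-t_\ell}^{(\mathbf{q})}$, i.e. to the finite-dimensional distributions of $({\sf X}_t^{(\mathbf{q})}(\bx))_{t\ge0}$; the piecewise-linear interpolation and the affine correction term in \eqref{LI:Bernstein-general} are designed so that they do not alter this limit.

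\textbf{Tightness in the H\"older scale (the crux).} Use the Doob decomposition $X^N = M^N + A^N$, with $M^N$ the martingale part (cf.\ Proposition \ref{prop:martingale}) and $A^N$ the compensator, whose one-step increment is a state-dependent vector of size $O(1/n)$ by Assumption {\bf (Q1)}. For $M$, the one-step conditional covariance of $X^{N+1}$ given $X^N$ equals, up to $O(n^{-2})$, the normalized multinomial covariance $n^{-1}\big((X^N)_i(\delta_{ij}-(X^N)_j)\big)_{i,j}$, hence is $O(1/n)$ entrywise; likewise every higher conditional even moment of $X^{N+1}-X^N$ is $O(n^{-1})$ (a single step is a normalized sum of $n$ bounded i.i.d.\ categorical increments). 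Applying the Burkholder--Davis--Gundy inequality to $M^{\lfloor nt\rfloor}-M^{\lfloor ns\rfloor}$ over $\lfloor nt\rfloor-\lfloor ns\rfloor\approx n(t-s)$ steps, and using compactness of $\Delta_{d-1}$ to control the running quadratic variation, yields for every $p\ge1$ a constant $C_p$ independent of $n$ with
\[
\E\big[\|M^{\lfloor nt\rfloor}-M^{\lfloor ns\rfloor}\|^{2p}\big]\le C_p\,|t-s|^{p},\qquad 0\le s\le t\le 1,
\]
while $\|A^{\lfloor nt\rfloor}-A^{\lfloor ns\rfloor}\|\le C|t-s|$ uniformly in $n$. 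Both bounds pass to the linear interpolation \eqref{LI:Bernstein-general} (the subtracted term being affine in $t$ with uniformly bounded slope by {\bf (Q1)}). Choosing $p$ so large that $(p-1)/(2p)>\alpha$ and invoking the Kolmogorov--Chentsov criterion in H\"older norm (equivalently the Garsia--Rodemich--Rumsey inequality) gives tightness of $\{(\mathcal{H}_\cdot^{(\mathbf{q}_n)})^{(n)}(\bx)\}_n$ in $C_{\bx}^{\beta\text{-H\"ol}}([0,1],\R^d)$ for every $\beta<1/2$, hence relative compactness in $C_{\bx}^{\alpha\text{-H\"ol}}$ for every $\alpha<1/2$.

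\textbf{Identification and conclusion.} Any subsequential limit has the finite-dimensional distributions identified above. Since the closure of $(\mathcal{A}_d^{(\mathbf{q})},C^2(\Delta_{d-1}))$ generates the Feller semigroup $({\sf T}_t^{(\mathbf{q})})_{t\ge0}$ (Lemma \ref{Lem:core-general}), the martingale problem for $\mathcal{A}_d^{(\mathbf{q})}$ is well posed and $({\sf X}_t^{(\mathbf{q})}(\bx))_{0\le t\le 1}$ is its unique solution; therefore every subsequential limit coincides in law with $({\sf X}_t^{(\mathbf{q})}(\bx))_{0\le t\le 1}$, and combining this with tightness gives convergence in law in $C_{\bx}^{\alpha\text{-H\"ol}}([0,1],\R^d)$ for all $\alpha<1/2$.

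\textbf{Main obstacle.} The delicate point is the H\"older-level tightness: it requires uniform-in-$n$ moment bounds of \emph{all} orders on the increments of the interpolated chain, which in turn rest on the precise $1/n$ scaling of the one-step multinomial covariance (so that the accumulated quadratic variation over $\asymp n(t-s)$ steps is of order $t-s$) and on {\bf (Q1)}, which forces the mutation drift to contribute only a uniformly Lipschitz-in-time perturbation. Once these discrete estimates are secured, the passage to the H\"older topology is routine.
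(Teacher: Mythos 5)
Your proposal follows essentially the same route as the paper: finite-dimensional convergence is read off from Theorem \ref{Thm:Bernstein-general-1}, and H\"older-level tightness comes from all-order moment bounds on the increments via the Burkholder--Davis--Gundy inequality, the binomial central-moment estimates, and the Kolmogorov continuity criterion. The only divergence is that you remove the mutation drift by an explicit Doob decomposition with a state-dependent compensator of size $O(1/n)$ per step, whereas the paper subtracts the deterministic term $nt(\bx^{(\mathbf{q}_n)}-\bx)$ and invokes Proposition \ref{prop:martingale}(2); your variant is if anything the more careful one, since the one-step drift $h_n\big((H_n^{(\mathbf{q}_n)})^N(\bx)\big)$ depends on the current state rather than on $\bx$ alone.
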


\subsection{Limit theorems for iterates of $B_{d_n, n}^{(\mathbf{q}_n)}$
and the Fleming--Viot process}

Recall that $E$ is a compact metric space, 
which corresponds to the set of 
all possible allele types. 
Though $E$ may be locally compact 
in most applications, such cases 
can be reduced to the compact cases 
after all, by one-point compactification. 
Let $C(E)$ be the Banach space of all continuous functions on $E$
with the supremum norm $\|\cdot\|_\infty$. 
For $\beta \in C(E)$ and $\mu \in \mathcal{P}(E)$, we write 
\[
\langle \beta, \mu \rangle:=\int_E \beta(z) \, \mu(\mathrm{d}z). 
\]
We note that $\mathcal{P}(E)$ 
is metrizable and is a compact metric space. 
Denote by $C(\mathcal{P}(E))$
the Banach space of all continuous functions  
on $\mathcal{P}(E)$ with the supremum norm $\|\cdot\|_\infty$.

Let $(\mathcal{T}_t)_{t \ge 0}$ be the Feller semigroup on $C(E)$
generated by the mutation operator $\mathcal{Q}$ of \eqref{Eq:generator-FV} and 
$P(t, z, \dd \xi)$ its transition function. 
Then, for every $N \in \N$, $t \ge 0$ and $z_1, z_2, \dots, z_N \in E$, 
we also define the semigroup 
$(\mathcal{T}_t^{(N)})_{t \ge 0}$ on $C(E^N)$ by 
$$
\mathcal{T}_t^{(N)} \beta (z_1, z_2, \dots, z_N)
=\int_E \cdots \int_E \beta (\xi_1, \xi_2, \dots, \xi_N) 
\, P(t, z_1, \dd \xi_1) \cdots P(t, z_N, \dd \xi_N).
$$
We denote by $(\mathcal{Q}^{(N)}, \Dom(\mathcal{Q}^{(N)}))$
the infinitesimal generator of $(\mathcal{T}_t^{(N)})_{t \ge 0}$. 
Then, for $N \in \N$ with $N \ge 2$ and $1 \le \ell_1<\ell_2 \le N$, 
the {\it sampling operator} $\Phi_{\ell_1\ell_2}^{(N)} : C(E^N) \to C(E^{N-1})$
is defined by letting $\Phi_{\ell_1\ell_2}^{(N)}\beta$
be the function obtained from $\beta$ by replacing $z_{\ell_2}$ by $z_{\ell_1}$
and renumbering the variables, that is, 
$$
(\Phi_{\ell_1\ell_2}^{(N)}\beta)(z_1, z_2, \dots, z_{N-1})
:=\beta(z_1, \dots, z_{\ell_2-1}, z_{\ell_1}, z_{\ell_2+1}, \dots, z_N). 
$$
For example, when $N=3$, 
the sampling operators $\Phi_{\ell_1\ell_2}^{(3)}, \, (\ell_1, \ell_2)=(1, 2), (1, 3), (2, 3)$, 
are concretely given by 
$$
\begin{aligned}
\Phi_{12}^{(3)}\beta(z_1, z_2)=\beta(z_1, z_1, z_2), \quad 
\Phi_{13}^{(3)}\beta(z_1, z_2)=\beta(z_1, z_2, z_1), \quad
\Phi_{23}^{(3)}\beta(z_1, z_2)=\beta(z_1, z_2, z_2).
\end{aligned}
$$

We now put 
$$
\mathcal{D}=\{\varphi(\mu)=\la \beta, \mu^{\otimes N}\ra \mid \beta \in \mathrm{Dom}(\mathcal{Q}^{(k)})
\cap C(E^k), \, N \in \mathbb{N}\},
$$
where $\mu^{\otimes N}$ stands for the $N$-fold 
product probability measure of $\mu$ itself. 
Note that $\mathcal{D}$ is clearly a dense subset of $C(\mathcal{P}(E))$. 
Then, for $\varphi(\mu)=\la \beta, \mu^{\otimes N} \ra\in \mathcal{D}$, 
the linear operator $\mathfrak{A}$ defined by \eqref{Eq:generator-FV} is 
reduced to 
\begin{equation}\label{Eq:generator-FV-2}
\mathfrak{A}\varphi(\mu)
    =\sum_{1 \le \ell_1 < \ell_2 \le N}\Big(
    \la \Phi_{\ell_1\ell_2}^{(N)}\beta, \mu^{\otimes(N-1)}\ra - \la \beta, \mu^{\otimes N}\ra \Big)+\la \mathcal{Q}^{(N)}\beta, \mu^{\otimes N}\ra
\end{equation}
in terms of the sampling operator $\Phi_{\ell_1\ell_2}^{(N)}$. 
Ethier and Kurtz have shown in \cite{EK93} the following property. 

\begin{lm}[see {\cite[Theorem 3.4]{EK93}}]\label{Lem:core-infinity}
The closure of 
$(\mathfrak{A}, \mathcal{D})$ generates a Feller semigroup
on $C(\mathcal{P}(E))$ and $\mathcal{D}$ is a core for the closure.
\end{lm}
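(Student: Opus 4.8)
This lemma is \cite[Theorem 3.4]{EK93}, and the plan is to reproduce the Ethier--Kurtz argument, which establishes the statement through the Hille--Yosida--Ray characterization of Feller generators. Concretely, one must verify three things: (i) $\mathcal{D}$ is dense in $C(\mathcal{P}(E))$; (ii) $\mathfrak{A}$ satisfies the positive maximum principle on $\mathcal{D}$, so that its closure is dissipative; and (iii) $\Range(\lambda I - \mathfrak{A})$ is dense in $C(\mathcal{P}(E))$ for some $\lambda>0$. Once (i)--(iii) are in hand, the Hille--Yosida--Ray theorem gives that $\overline{(\mathfrak{A},\mathcal{D})}$ generates a Feller semigroup, and the core assertion comes for free from the same machinery. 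Property (i) is the Stone--Weierstrass argument already recorded in the text: the monomials $\mu\mapsto\langle\beta,\mu^{\otimes N}\rangle$ separate points of $\mathcal{P}(E)$, and $\Dom(\mathcal{Q}^{(N)})$ is dense in $C(E^N)$.

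For (ii) I would work with the reformulation \eqref{Eq:generator-FV-2} and its equivalence with \eqref{Eq:generator-FV}, which one checks on $\mathcal{D}$ by computing the variational derivatives of $\mu\mapsto\langle\beta,\mu^{\otimes N}\rangle$ explicitly. The point is that the ``resampling'' part is governed by the bilinear form
\[
(g,h)\longmapsto\int_E\int_E\mu(\mathrm{d}z)\big(\delta_z(\mathrm{d}w)-\mu(\mathrm{d}w)\big)g(z)h(w),
\]
which is positive semidefinite because its value at $(g,g)$ equals $\langle g^2,\mu\rangle-\langle g,\mu\rangle^2=\Var_\mu(g)\ge 0$; combined with the fact that $\mathcal{Q}$ itself obeys the positive maximum principle on $\Dom(\mathcal{Q})$, a direct (if slightly delicate) computation shows that whenever $\varphi\in\mathcal{D}$ attains its maximum over $\mathcal{P}(E)$ at some $\mu_0$, then $\mathfrak{A}\varphi(\mu_0)\le 0$.

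The substance of the proof is (iii), and the standard route is a \emph{moment duality} argument. I would introduce the dual process: a pure-jump Markov process $(N_t,Y_t)$ on $\bigsqcup_{N\ge 1}\bigl(\{N\}\times C(E^N)\bigr)$ which, started from $(N,\beta)$, runs the mutation flow $\mathcal{T}_t^{(N)}$ on the second coordinate and, at rate $\binom{N}{2}$, selects a uniformly random pair $(\ell_1,\ell_2)$, replaces $\beta$ by $\Phi_{\ell_1\ell_2}^{(N)}\beta$ and decrements $N$ to $N-1$. Since $N_t$ is non-increasing and hits $1$ after finitely many jumps almost surely, the dual is well defined, and Dynkin's formula applied to $(\mu,(N,\beta))\mapsto\langle\beta,\mu^{\otimes N}\rangle$ against both generators yields the duality identity
\[
\mathbb{E}^{\mu}\bigl[\langle\beta,X_t^{\otimes N}\rangle\bigr]=\mathbb{E}^{(N,\beta)}\bigl[\langle Y_t,\mu^{\otimes N_t}\rangle\bigr]
\]
for any solution $X$ of the $(\mathfrak{A},\mathcal{D})$-martingale problem. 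Because monomials are measure-determining on $\mathcal{P}(E)$, this fixes all one-dimensional (and then, by the Markov property, all finite-dimensional) distributions of $X$, so the martingale problem for $(\mathfrak{A},\mathcal{D})$ is well posed; existence follows either by checking directly that the right-hand side above, as $(N,\beta)$ vary, defines a consistent family of normalized positive moment functionals — hence the moments of a genuine $\mathcal{P}(E)$-valued process — or by a limiting argument from the finite-dimensional Wright--Fisher diffusions of Lemma \ref{Lem:core-general} embedded via $\mathbf{x}\mapsto\sum_i x_i\delta_{z_i}$. Well-posedness of the martingale problem, together with (i) and (ii), then yields (iii) and the full statement by the standard equivalence between well-posed martingale problems and Feller semigroups, e.g.\ \cite[Chapter 4]{EK}, which also delivers that $\mathcal{D}$ is a core.

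The main obstacle is clearly (iii): constructing the dual process cleanly and, above all, making the duality computation and the consistency/normalization check for the moment functionals fully rigorous — this is where compactness of $E$ and the Feller property of $\mathcal{Q}$ enter essentially, and it is the technical heart of \cite{EK93}. The positive maximum principle in (ii) is a secondary but nontrivial point, amounting to careful bookkeeping with variational derivatives and with the behavior of $\mathfrak{A}\varphi$ near the boundary of $\mathcal{P}(E)$.
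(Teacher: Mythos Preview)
The paper does not prove this lemma at all: it is stated with attribution to \cite[Theorem~3.4]{EK93} and immediately followed by new material, so there is no argument in the paper to compare against. Your proposal is a faithful sketch of the Ethier--Kurtz proof itself, and the three ingredients (density of $\mathcal{D}$, positive maximum principle, range condition via moment duality with the function-valued coalescent dual) are the correct ones.

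One remark on economy: for step~(iii) you do not actually need the full martingale-problem machinery. From the representation \eqref{Eq:generator-FV-2} the operator $\mathfrak{A}$ is upper-triangular with respect to polynomial degree: acting on $\varphi(\mu)=\langle\beta,\mu^{\otimes N}\rangle$ it produces $-\big(\binom{N}{2}I-\mathcal{Q}^{(N)}\big)$ in degree $N$ plus terms of degree $N-1$. Since $\mathcal{Q}^{(N)}$ generates a Feller semigroup on $C(E^N)$, the resolvent $\big(\lambda+\binom{N}{2}-\mathcal{Q}^{(N)}\big)^{-1}$ exists for any $\lambda>0$, and one can solve $(\lambda-\mathfrak{A})\psi=\varphi$ within $\mathcal{D}$ by back-substitution over degrees $N,N-1,\dots,1$. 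This gives $\Range(\lambda-\mathfrak{A})\supset\mathcal{D}$ directly, without invoking duality or well-posedness. The duality you describe is of course correct and is what ultimately identifies the semigroup, but the generation statement itself admits this shorter route.
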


Let $E^{(d)}=\{z_1^{(d)}, z_2^{(d)}, \dots, z_d^{(d)}\} 
\subset E, \, d=2, 3, 4, \dots,$ be 
a sequence of finite subsets, 
where $z_i^{(d)} \neq z_j^{(d)}$ whenever $i \neq j$. 
Then, we easily see that 
the $(d-1)$-simplex $\Delta_{d-1}$ 
is always identified with a subset 
\[
\mathcal{P}(E^{(d)})=\left\{
\mu_{\bx}=\sum_{i=1}^d x_i \delta_{z_i^{(d)}} \, \Big| \, 
x_1, x_2, \dots, x_d \ge 0, \, \sum_{i=1}^d x_i=1\right\} \subset 
\mathcal{P}(E).
\]
The sequence $\{E^{(d)}\}_{d=2}^\infty$ is called 
a {\it discretization} of the compact metric space $E$. 
We define 
$\widehat{\pi}_d : \Delta_{d-1} \to \mathcal{P}(E)$ by
\[
\widehat{\pi}_d(\bx):=\mu_{\bx} =
\sum_{i=1}^d x_i \delta_{z_i^{(d)}}, \qquad 
\bx=(x_1, x_2, \dots, x_d) 
\in \Delta_{d-1},
\]
and 
$P_d : C(\mathcal{P}(E)) \to C(\Delta_{d-1})$ by 
\[
P_d\varphi(\bx)
:=(\varphi \circ \widehat{\pi}_d)(\bx)=\varphi(\mu_\bx), \qquad 
\varphi \in C(\mathcal{P}(E)), \, 
\bx \in \Delta_{d-1}.
\]
Since it holds that $\|P_d\varphi\|_\infty=\|\varphi\|_\infty$
as $d \to \infty$ for every $\varphi \in C(\mathcal{P}(E))$,
the sequence 
$\{(C(\Delta_{d-1}), P_d)\}_{d=2}^\infty$ approximates 
the Banach space $(C(\mathcal{P}(E)), \|\cdot\|_\infty)$
in the sense of Trotter \cite{Trotter}.

Let $\{d_n\}_{n=1}^\infty$ be an increasing sequence 
of positive integers strictly bigger than one
and $E^{(d_n)}=\{z_1^{(d_n)}, z_2^{(d_n)}, 
\dots, z_{d_n}^{(d_n)}\}$
a discretization of $E$ along the sequence 
$\{d_n\}_{n=1}^\infty$. 
In what follows, we always assume that 

\vspace{2mm}
\noindent
{\bf (D)}: it holds that $d_n=o(n^{1/8})$ as $n \to \infty$.

\vspace{2mm}
\noindent
Since
the dimension $d_n$ of the Bernstein operator 
depends on $n$ under this setting, 
we need to replace Assumption 
{\bf (Q1)} by the following.

\vspace{2mm}
\noindent{\bf (Q2)}: The set of real numbers $\mathbf{q}_n=\{q_{ij}^{(n)}\}_{i, j=1}^{d_n}$, $n \in \N$, satisfies the following.
\begin{itemize}
\item 
It holds that $\dis q_{ij}^{(n)}>0, \, i \neq j$ and 
\[
\dis q_{ii}^{(n)}=-\sum_{j \neq i}q_{ij}^{(n)}, 
\qquad 
i=1, 2, \dots, d_n.
\]
\item
There exists some sequence $\{a_n\}_{n=1}^\infty$ of 
positive integers satisfying $a_n=o(n^{-11/16})$ as $n \to \infty$ and
\[
\max_{i, j=1, 2, \dots, d_n}q_{ij}^{(n)} \le Ca_n, \qquad n \in \N,
\]
for some $C>0$ independent of $n \in \N$.
\end{itemize}

\begin{re}\normalfont
The approximating property 
\[
\lim_{n \to \infty}\|B_{d_n, n}^{(\mathbf{q}_n)}f-f\|_\infty=0, \qquad f \in C(\Delta_{d_n-1}),
\]
still holds even though we assume {\bf (Q2)} instead of {\bf (Q1)}, 
which is easily checked with a slight modification of the proof of 
Proposition \ref{prop:approximation-d-modified}. 
\end{re}

For $n \in \N$, let $\mathcal{Q}_n$ be 
the $d_n \times d_n$-matrix defined by
\[
\mathcal{Q}_n=Q_n-I, \qquad 
Q_n=\begin{bmatrix}
1+q_{11}^{(n)} & q_{12}^{(n)} & \cdots & q_{1d_n}^{(n)} \\
q_{21}^{(n)} & 1+q_{22}^{(n)} & \cdots & q_{2d_n}^{(n)} \\
\vdots & \vdots & \ddots & \vdots \\
q_{d_n1}^{(n)} & q_{d_n2}^{(n)} & \cdots & 1+q_{d_nd_n}^{(n)}
\end{bmatrix}. 
\]
To establish an infinite-dimensional extension of 
Theorem \ref{Thm:Bernstein-general-1}, 
we need to put 
the following additional assumption
for the mutation operator $\mathcal{Q}$
of $\mathfrak{A}$. 

\vspace{2mm}
\noindent
{\bf (Q3)}: The mutation operator 
$\mathcal{Q}$ on $C(E)$ satisfies the following.
\begin{itemize}
    \item The linear operator $\mathcal{Q}$ generates  
    a Feller semigroup $(\mathcal{T}_t)_{t \ge 0}$ on $C(E)$. 
    \item It holds that    
    \[
    \lim_{n \to \infty}\|n\mathcal{Q}_n\beta - \mathcal{Q}\beta\|_\infty =0, 
    \qquad f \in \Dom(\mathcal{Q}). 
    \]
\end{itemize}

Under Assumptions {\bf (D)}, {\bf (Q2)} and {\bf (Q3)} introduced above, 
we show that the iterate of the $d_n$-dimensional 
Bernstein operator $B_{d_n,n}^{(\mathsf{q}_n)}$ uniformly converges to
the Feller semigroup $({\sf T}^{(\infty)}_t=\e^{t\mathfrak{A}})_{t \ge 0}$ 
generated by $\mathfrak{A}$ as $n \to \infty$ together with 
its rate of convergence, which is our final main result of the present paper.

\begin{tm}
\label{Thm:Bernstein-infinity-1}
        We assume {\bf (D)}, {\bf (Q2)} and {\bf (Q3)}.
        For $ \mu \in \mathcal{P}(E) $, 
        let $\big(\mathsf{X}_t^{(\infty)}(\mu)\big)_{t \ge 0}$
        be the 
        $\mathcal{P}(E)$-valued diffusion process
        with $\mathsf{X}_0^{(\infty)}(\mu)=\mu \in \mathcal{P}(E)$
        whose infinitesimal generator is given by $\mathfrak{A}$. 
        Then, we have 
        \begin{equation*}
        {\sf T}_t^{(\infty)} \varphi(\mu)
        =\E\big[\varphi\big({\sf X}_t^{(\infty)}(\mu)\big)\big], 
        \qquad \varphi \in C(\mathcal{P}(E)), \, 
        \mu \in \mathcal{P}(E), \, t \ge 0,
        \end{equation*}
        and 
        \begin{equation}
            \lim_{n \to \infty}
            \left\|(B_{d_n,n}^{(\mathbf{q}_n)})^{\lfloor nt \rfloor}P_{d_n}\varphi 
            - P_{d_n}\left(\E\big[\varphi\big({\sf X}_t^{(\infty)}(\cdot)\big)\big]\right)\right\|_\infty=0
            , \qquad \varphi \in C(\mathcal{P}(E)), \, t \ge 0.
            \label{Eq:Bernstein-convergence-infinity}
        \end{equation}
   Moreover, we assume that 

\vspace{2mm}
\noindent
{\bf (A5):} $d_n \le Cn^{1/8-\ve}$ and 
    $a_n \le Cn^{-11/16+\ve}$ for 
    some $C>0$ and some $\ve \in (0, 1/8)$, and 

\vspace{2mm}
\noindent
{\bf (A6):} 
There exists a seminorm
$\tau_n$ on $\Dom(\mathcal{Q})$
with $\tau_n(\beta) \searrow 0$ as $n \to \infty$ and
    \[
    \|n\mathcal{Q}_n\beta - \mathcal{Q}\beta\|_\infty \le 
    \tau_n(\beta), \qquad n \in \N.
    \]

\vspace{2mm}
\noindent
   Let $\varphi(\mu)=\la \beta, \mu^{\otimes N}\ra \in \mathcal{D}$ 
   satisfy that $\mathsf{T}_t^{(\infty)}\varphi \in \mathcal{D}$ for $t \ge 0$. Namely, for each $t \ge 0$,  
   there exist $N_t \in \N$ and $\beta_t \in \Dom(\mathcal{Q}^{(N_t)}) \cap C(E^{N_t})$ such that 
   $\mathsf{T}_t^{(\infty)}\varphi(\mu)=\la \beta_t, \mu^{\otimes N_t}\ra$.     
   Then we have 
        \begin{align}
            &\left\|(B_{d_n,n}^{(\mathbf{q}_n)})^{\lfloor nt \rfloor}P_{d_n}\varphi 
            - P_{d_n}\left(\E\big[\varphi\big({\sf X}_t^{(\infty)}(\cdot)\big)\big]\right)\right\|_\infty \nn \\
            &\le 
            C\left(\sqrt{\frac{t}{n}}+\frac{t}{n}\right)
            (\tau_n(\beta) \vee n^{-\ve}) 
            +C \int_0^t (\tau_n(\beta_s) \vee n^{-\ve}) \, \dd s
            \label{Eq:Bernstein-convergence-infinity-rate}
        \end{align}
    for $n \in \N$ and $t \ge 0$, where
    the positive constant $C>0$ depends on not $n \in \N$ but $\varphi$. 
     
\end{tm}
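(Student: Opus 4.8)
The plan is to deduce Theorem~\ref{Thm:Bernstein-infinity-1} from Trotter's approximation theorem (Proposition~\ref{Prop:Trotter}) and its quantitative refinement (Proposition~\ref{Prop:Trotter-rate}), the only genuine analytic input being a Voronovskaya-type estimate for the rescaled operators $n\big(B_{d_n,n}^{(\mathbf{q}_n)}-I\big)$ read through the intertwiners $P_{d_n}$. We have already recorded that $\{(C(\Delta_{d-1}),P_d)\}_{d\ge 2}$ approximates $(C(\mathcal{P}(E)),\|\cdot\|_\infty)$ in the sense of Trotter, that $\mathcal{D}$ is a core for $(\mathfrak{A},\mathcal{D})$, and that $\mathfrak{A}$ generates the Feller semigroup $({\sf T}^{(\infty)}_t)_{t\ge 0}$ (Lemma~\ref{Lem:core-infinity}); moreover, under {\bf (Q2)} one checks that $\bx^{(\mathbf{q}_n)}\in\Delta_{d_n-1}$ for $n$ large, so that $B_{d_n,n}^{(\mathbf{q}_n)}\bm{1}=\bm{1}$ and hence $B_{d_n,n}^{(\mathbf{q}_n)}$ is, for such $n$, a positive linear contraction on $C(\Delta_{d_n-1})$. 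Thus the whole theorem reduces to the following Voronovskaya-type estimate, which carries the real weight: writing $\delta_n(\varphi):=\|n(B_{d_n,n}^{(\mathbf{q}_n)}-I)P_{d_n}\varphi-P_{d_n}(\mathfrak{A}\varphi)\|_\infty$ for $\varphi\in\mathcal{D}$, one has $\delta_n(\varphi)\to0$ as $n\to\infty$, and in fact $\delta_n(\varphi)\le C_\varphi\big(\tau_n(\beta)\vee n^{-\ve}\big)$ when $\varphi(\mu)=\la\beta,\mu^{\otimes N}\ra$ and {\bf (A5)}--{\bf (A6)} hold; this is Lemma~\ref{Thm:Voronovskaya-infinite}.

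To prove that estimate I would start from $P_{d_n}\varphi(\bx)=\la\beta,\mu_\bx^{\otimes N}\ra=\sum_{\ell_1,\dots,\ell_N=1}^{d_n}\beta\big(z_{\ell_1}^{(d_n)},\dots,z_{\ell_N}^{(d_n)}\big)x_{\ell_1}\cdots x_{\ell_N}$ and use the multinomial representation $B_{d_n,n}^{(\mathbf{q}_n)}P_{d_n}\varphi(\bx)=\E\big[\la\beta,\mu_{S_n(\bx^{(\mathbf{q}_n)})/n}^{\otimes N}\ra\big]$, expanding the expectation through the mixed (factorial) moments of the multinomial vector $S_n(\bx^{(\mathbf{q}_n)})$. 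After multiplying by $n$, the expansion splits into three contributions: a \emph{resampling} part, arising from the $O(1/n)$ factorial-moment corrections that occur when two of the $N$ sampled indices coincide together with the $O(1/n)$ defect of the ``all distinct'' probability, which reproduces $\sum_{1\le\ell_1<\ell_2\le N}\big(\la\Phi_{\ell_1\ell_2}^{(N)}\beta,\mu_\bx^{\otimes(N-1)}\ra-\la\beta,\mu_\bx^{\otimes N}\ra\big)$; a \emph{mutation} part, obtained by expanding $n\big(\mu_{\bx^{(\mathbf{q}_n)}}-\mu_\bx\big)$ by multilinearity of $\mu\mapsto\la\beta,\mu^{\otimes N}\ra$, which is the discrete analogue of $\la\mathcal{Q}^{(N)}\beta,\mu_\bx^{\otimes N}\ra$ built from $n\mathcal{Q}_n$ and converges to $\la\mathcal{Q}^{(N)}\beta,\mu_\bx^{\otimes N}\ra$ by {\bf (Q3)} at a rate governed by $\tau_n(\beta)$ via {\bf (A6)}; and a remainder collecting the patterns with three or more coincident indices and the cross terms between the mutation shift and the resampling, which is bounded by a product of powers of $d_n$, $a_n$ and $1/n$. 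By \eqref{Eq:generator-FV-2} the first two contributions assemble into $P_{d_n}(\mathfrak{A}\varphi)$ up to the mutation error, while the remainder is $o(1)$ --- and $O(n^{-\ve})$ once {\bf (A5)} is imposed --- precisely because of the orders $d_n=o(n^{1/8})$ and $a_n=o(n^{-11/16})$; tracking these competing error terms and verifying that these exponents suffice is the main obstacle of the argument.

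Granting Lemma~\ref{Thm:Voronovskaya-infinite}, the rest is routine. Proposition~\ref{Prop:Trotter}, applied with the contractions $B_{d_n,n}^{(\mathbf{q}_n)}$, the generator convergence $\delta_n(\varphi)\to0$ on the core $\mathcal{D}$, and Lemma~\ref{Lem:core-infinity}, yields \eqref{Eq:Bernstein-convergence-infinity}; the identity ${\sf T}_t^{(\infty)}\varphi(\mu)=\E[\varphi({\sf X}_t^{(\infty)}(\mu))]$ is immediate, since by definition the Fleming--Viot process $({\sf X}_t^{(\infty)}(\mu))_{t\ge 0}$ is the $\mathcal{P}(E)$-valued Markov process with generator $\mathfrak{A}$, so ${\sf T}_t^{(\infty)}$ is its transition semigroup. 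For the rate I would feed $\delta_n$ into Proposition~\ref{Prop:Trotter-rate} (cf.~\cite{Namba}), which converts the generator estimate into a semigroup estimate of the shape
\[
\Big\|(B_{d_n,n}^{(\mathbf{q}_n)})^{\lfloor nt\rfloor}P_{d_n}\varphi-P_{d_n}{\sf T}_t^{(\infty)}\varphi\Big\|_\infty\le C\Big(\sqrt{t/n}+t/n\Big)\,\delta_n(\varphi)+C\int_0^t\delta_n\big({\sf T}_s^{(\infty)}\varphi\big)\,\dd s,
\]
where the $\sqrt{t/n}$ comes from the martingale/variance estimate for $\lfloor nt\rfloor$ steps and the $t/n$ from the floor rounding. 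The hypothesis ${\sf T}_t^{(\infty)}\varphi\in\mathcal{D}$ guarantees ${\sf T}_s^{(\infty)}\varphi=\la\beta_s,\mu^{\otimes N_s}\ra$, so Lemma~\ref{Thm:Voronovskaya-infinite} applies to each ${\sf T}_s^{(\infty)}\varphi$ and gives $\delta_n({\sf T}_s^{(\infty)}\varphi)\le C\big(\tau_n(\beta_s)\vee n^{-\ve}\big)$; substituting this together with $\delta_n(\varphi)\le C\big(\tau_n(\beta)\vee n^{-\ve}\big)$ produces exactly \eqref{Eq:Bernstein-convergence-infinity-rate}. One remaining point is that the $\varphi$-dependent constant in the Voronovskaya bound --- which enters through $N$, $\|\beta\|_\infty$ and moduli of continuity of $\beta$ --- must be shown to stay bounded along the orbit $s\mapsto{\sf T}_s^{(\infty)}\varphi$, using the contractivity of ${\sf T}_s^{(\infty)}$ and the regularity that $\beta_s$ inherits from the semigroup, so that the constant $C$ in \eqref{Eq:Bernstein-convergence-infinity-rate} is indeed independent of $n$.
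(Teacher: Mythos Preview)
Your overall strategy---the Voronovskaya-type estimate (Lemma~\ref{Thm:Voronovskaya-infinite}) fed into Trotter's theorem (Proposition~\ref{Prop:Trotter}) and its quantitative refinement (Proposition~\ref{Prop:Trotter-rate})---matches the paper's exactly, and your identification of where the analytic weight lies is correct. Two points of departure are worth flagging. First, for the Voronovskaya step the paper does not expand via factorial moments of the multinomial; it applies a third-order Taylor expansion of $P_{d_n}\varphi$ around $\bx$, giving a first-order term $I_n^{(1)}(\bx)=\la n\mathcal{Q}_n^{(N)}\beta,(\mu_\bx^{(d_n)})^{\otimes N}\ra$ (mutation), a second-order term $I_n^{(2)}(\bx)$ that yields the resampling sum plus an explicit error $\mathcal{G}_n(\beta;\bx)$ controlled by $d_n^3a_n+nd_n^3a_n^2$, and a third-order remainder $I_n^{(3)}(\bx)$ bounded by $C(d_n/n^{1/8})^4$ via fourth moments. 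Your factorial-moment route would regroup the same pieces and is equally viable for polynomial test functions; the Taylor approach makes the exponents $1/8$ and $11/16$ in {\bf (D)} and {\bf (Q2)} drop out transparently. Second, the paper does \emph{not} treat the identity ${\sf T}_t^{(\infty)}\varphi=\E[\varphi({\sf X}_t^{(\infty)})]$ as immediate: it sets $\widetilde{\sf T}_t^{(\infty)}\varphi(\mu):=\E[\varphi({\sf X}_t^{(\infty)}(\mu))]$ separately and proves $\widetilde{\sf T}_t^{(\infty)}={\sf T}_t^{(\infty)}$ by a direct Gronwall argument (after Ethier--Griffiths~\cite{EG}) comparing the integral equations for both semigroups on multiplicative test functions $\varphi_{\beta_1,\dots,\beta_N}(\mu)=\prod_i\la\beta_i,\mu\ra$. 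Given Lemma~\ref{Lem:core-infinity} your shortcut is legitimate---uniqueness of the Feller semigroup generated by the closure of $(\mathfrak{A},\mathcal{D})$ forces any $\mathfrak{A}$-diffusion to have transition semigroup ${\sf T}_t^{(\infty)}$---but the paper's argument is self-contained and does not invoke martingale-problem well-posedness.
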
    

In the present paper, we do not discuss the functional limit theorem 
in the infinite-dimensional case. 
However, it should be a further important problem to reveal. 
So we will deal with it in the forthcoming paper.

\section{{\bf Proofs of Theorems \ref{Thm:Bernstein-1} and \ref{Thm:Bernstein-2}}}
\label{Sect:Proofs-1}

In this section, we shall give the proofs of 
Theorems \ref{Thm:Bernstein-1} and 
\ref{Thm:Bernstein-2}.

\subsection{The Voronovskaya-type theorem for $B_{d,n}$}

Before demonstrating the proofs, we need to show 
the lemma below by following the argument in 
\cite{KYZ18} basically. 
This roughly says that the sequence 
$\{n(B_{d,n} - I)\}_{n=1}^\infty$
uniformly converges to the second order differential operator $\mathcal{A}_d$
given by \eqref{Eq:generator-Bernstein}. 
Its rate of convergence is also established by using the 
expression \eqref{Eq:d-Bernstein-expectation} effectively.

\begin{lm}
\label{Lem:Voronovskaya-Berstein}
For every $f \in C^2(\Delta_{d-1})$, we have 
    \[
    \lim_{n \to \infty}
    \|n(B_{d,n}f - f) - \mathcal{A}_df\|_\infty=0,
    \]
where $\mathcal{A}_d$ is the 
second order differential operator acting on 
$C^2(\Delta_{d-1})$ given by \eqref{Eq:generator-Bernstein}. 
Moreover, if all second partial derivatives of 
$f \in C^2(\Delta_{d-1})$ are Lipschitz, we have
    \begin{equation}\label{Eq:generator-estimate}
    \|n(B_{d,n}f - f) - \mathcal{A}_df\|_\infty
        \le \left(\frac{d^{5/2}}{16 \cdot 3^{1/4}}
        \max_{i, j=1, 2, \dots, d}
        \mathrm{Lip}\left(\del_{ij}f\right) \right)
        \times \frac{1}{\sqrt{n}}, 
    \qquad n \in \N.
    \end{equation}
\end{lm}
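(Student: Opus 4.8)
The plan is to exploit the probabilistic representation $B_{d,n}f(\bx)=\E[f(n^{-1}S_n(\bx))]$ and reduce everything to a Taylor expansion of $f$ around $\bx$ together with sharp moment estimates for the multinomial vector $S_n(\bx)$. Concretely, writing $Y=n^{-1}S_n(\bx)-\bx$, so that $\E[Y]=0$ (since $\E[S_n^i(\bx)]=nx_i$) and $\E[Y_iY_j]=\tfrac1n x_i(\delta_{ij}-x_j)$, a second-order Taylor expansion with integral remainder gives
\[
B_{d,n}f(\bx)-f(\bx)=\sum_{i,j=1}^d \del_i f(\bx)\,\E[Y_i]+\tfrac12\sum_{i,j=1}^d \del_{ij}f(\bx)\,\E[Y_iY_j]+R_n(\bx),
\]
where the first sum vanishes, the second equals $\tfrac1n\A_d f(\bx)$, and
\[
R_n(\bx)=\sum_{i,j=1}^d \E\!\left[\int_0^1 (1-\theta)\big(\del_{ij}f(\bx+\theta Y)-\del_{ij}f(\bx)\big)\,\dd\theta\; Y_iY_j\right].
\]
Thus $n(B_{d,n}f-f)-\A_d f = nR_n$, and the whole problem becomes an estimate of $\|nR_n\|_\infty$.

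First I would handle the qualitative statement: since $\Delta_{d-1}$ is compact, each $\del_{ij}f$ is uniformly continuous, so for any $\ve>0$ there is $\delta>0$ with $|\del_{ij}f(\bx+\theta Y)-\del_{ij}f(\bx)|<\ve$ whenever $\|Y\|<\delta$; splitting the expectation defining $R_n$ according to $\{\|Y\|<\delta\}$ and $\{\|Y\|\ge\delta\}$, using $\|\del_{ij}f\|_\infty<\infty$ on the bad event and Chebyshev (the fourth moment $\E[\|Y\|^4]=O(n^{-2})$ kills $\mathbb P(\|Y\|\ge\delta)\,$-type terms after multiplying by $n$), one gets $\|nR_n\|_\infty\to0$. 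This is the standard Voronovskaya argument and I would keep it brief.

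For the quantitative estimate I would use the Lipschitz hypothesis directly: $|\del_{ij}f(\bx+\theta Y)-\del_{ij}f(\bx)|\le \mathrm{Lip}(\del_{ij}f)\,\theta\|Y\|$, hence
\[
|R_n(\bx)|\le \tfrac12\max_{i,j}\mathrm{Lip}(\del_{ij}f)\;\E\!\Big[\|Y\|\sum_{i,j=1}^d |Y_i||Y_j|\Big]
=\tfrac12\max_{i,j}\mathrm{Lip}(\del_{ij}f)\;\E\big[\|Y\|\,\|Y\|_1^2\big],
\]
and then bound $\|Y\|_1\le \sqrt d\,\|Y\|$ to reduce to $\E[\|Y\|^3]$. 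By Jensen (or Cauchy--Schwarz) $\E[\|Y\|^3]\le (\E[\|Y\|^4])^{3/4}$, and the fourth moment of the centered normalized multinomial satisfies $\E[\|Y\|^4]=\sum_{i,j}\E[Y_i^2Y_j^2]\le C(d)/n^2$; a careful computation of the multinomial fourth moments, keeping the dependence on $d$ explicit, should yield exactly the constant needed so that $n\,\|R_n\|_\infty\le \frac{d^{5/2}}{16\cdot 3^{1/4}}\max_{i,j}\mathrm{Lip}(\del_{ij}f)\,n^{-1/2}$. The main obstacle is this last bookkeeping step: one must compute $\E[(S_n^i-nx_i)^2(S_n^j-nx_j)^2]$ for the multinomial (both $i=j$ and $i\neq j$), optimize the elementary bounds $x_i(1-x_i)\le 1/4$ and the like on $\Delta_{d-1}$, and track the combinatorial factors carefully enough to land on the precise constant $\tfrac{1}{16\cdot 3^{1/4}}$ with the stated power $d^{5/2}$ — the factor $3^{1/4}$ in particular strongly suggests an application of the bound $\E[\|Y\|^3]\le(\E[\|Y\|^4])^{3/4}$ combined with a sharp fourth-moment constant of the form $3/(16n^2)$ up to dimensional factors, so I would aim the computation toward producing exactly that.
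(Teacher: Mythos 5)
Your proposal follows essentially the same route as the paper's proof: the probabilistic representation, a second-order Taylor expansion with integral remainder, uniform continuity plus a tail estimate (the paper uses Hoeffding, but your Chebyshev-with-fourth-moment variant works equally well) for the qualitative statement, and the Lipschitz bound reduced via $\|Y\|_1\le\sqrt d\,\|Y\|$ and Jensen to $\E[\|Y\|^4]$ for the quantitative one. Two bookkeeping points to make the constant come out exactly: the prefactor on $R_n$ should be $\int_0^1\theta(1-\theta)\,\dd\theta=1/6$ rather than $1/2$ (this factor of $3$ is precisely what is needed to land on $d^{5/2}/(16\cdot3^{1/4})$), and the paper sidesteps the mixed multinomial fourth moments you propose to compute by instead bounding $\E\big[(\sum_i Y_i^2)^2\big]\le d\sum_i\E[Y_i^4]$ and using only the marginal binomial estimate $\E[(S_n^i-nx_i)^4]\le 3n^2/16$, which yields $\E[\|Y\|^4]\le 3d^2/(16n^2)$.
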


\begin{proof}
    In order to make a notation simple, we write 
    \[
    G_n(\x)
    =\big(G_n^1(\bx), G_n^2(\bx), \dots, G_n^d(\bx)\big)
    =\left(
    \frac{1}{n}S_n^1(x_1), 
    \frac{1}{n}S_n^2(x_2),
     \dots, \frac{1}{n}S_n^d(x_d)\right)
    \]
    for $\x=(x_1, x_2, \dots, x_d) \in \Delta_{d-1}$. 
    By definition, it holds that
    \begin{equation}
    \label{Bernstein-1st-moment}
        \E\left[G_n^i(\bx)-x_i\right] = 0, 
        \qquad i = 1,2, \dots, d,
    \end{equation}
    and
    \begin{equation}
    \label{Bernstein-2nd-moment}
        \E\left[
        \left(G_n^i(\bx) - x_i\right)
        \left(G_n^j(\bx) - x_j\right)
        \right] = \frac{1}{n}x_i(\delta_{ij} - x_j), 
        \qquad i,j = 1,2, \dots, d,
    \end{equation}
    for $ \x = (x_1, x_2, \dots, x_d) \in \Delta_{d-1} $.
    By applying the Taylor formula with the integral remainder and \eqref{Bernstein-1st-moment}, 
    one has
    \begin{align*}
        &B_{d, n} f(\x) - f(\x)
        = \E\left[f\big(G_n(\x)\big) - f(\x)\right] \\
        &= \E\left[
        \sum_{i,j=1}^d 
        \int^1_0 (1-t) 
        \left(G_n^i(\bx) - x_i\right)
        \left(G_n^j(\bx) - x_j\right)
        \del_{ij}f
        \big(\x + t(G_n(\x) - \x)
        \big) \, \dd t
        \right]
    \end{align*}
    for $ \x = (x_1, x_2, \dots, x_d)\in \Delta_{d-1} $. 
    This leads to 
    \begin{align}
        n\big(B_{d, n} f(\x) - f(\x)\big) - \A_d f(\x) 
        &= n\E \Bigg[ 
        \sum^{d}_{i,j=1}
        \int^{1}_{0} 
        (1-t)
        \left(G^i_{n}(\bx) - x_{i}\right)
        \left(G^j_{n}(\bx) - x_{j}\right) \notag\\
        &\hspace{1cm}\times 
        \left\{
        \del_{ij}f
        \big(\x + t\left(G_n(\x) - \x\right)\big)
        - \del_{ij}f(\x) \right\} \,
        \dd t
        \Bigg] \notag \\
        &=: n\E\left[J_n(\x)\right]\label{J_n-A}
    \end{align}
    for $ n\in \N $ and $ \x \in \Delta_{d-1} $. 
    Since 
    $ \del_{ij}f $
    is uniformly continuous on $\Delta_{d-1}$, 
    for any $ \ve > 0 $, 
    we can choose some $ \delta > 0 $
    such that $ 0 < \|\x - \y\| < \delta $, 
    $ \x, \y \in \Delta_{d-1} $, 
    implies $ |\del_{ij}f(\x) - \del_{ij}f(\y)| < \ve$.
    Hence, it follows from the Schwarz inequality 
    and \eqref{Bernstein-2nd-moment} that
    \begin{align}
        \big|\E\left[
        J_n(\x) : \left\|G_n(\x) - \x\right\| < \delta 
        \right]\big| 
        &\le \frac{\ve}{2} \sum_{i,j=1}^d 
        \E\Big[
        \left|G^i_{n}(\bx) - x_{i}\right|
        \left|G^j_{n}(\bx) - x_{j}\right|
        \Big] \notag\\
        &\le \frac{\ve}{2} \sum_{i,j=1}^d 
        \left\{
        \E\left[\left|G^i_{n}(\bx) - x_{i}\right|^2\right]
        \E\left[\left|G^j_{n}(\bx) - x_{j}\right|^2
        \right]
        \right\}^{1/2} \notag\\
        &\le \frac{\ve}{2} \sum_{i,j=1}^d 
        \left\{
        \frac{1}{n^2}x_i(1 - x_i)x_j(1 - x_j)
        \right\}^{1/2} 
        \le \frac{\ve d^2}{8n}. \label{J_n-B}
    \end{align}
    On the other hand, the inequality
    $ \left\|G_n(\x) - \x\right\| \le 1$ implies that
    \begin{align}
        &\big|\E\left[
        J_n(\x) : \left\|G_n(\x) - \x\right\|
        \ge \delta \right]\big|\notag \\
        &\le 2 \sum_{i,j=1}^d 
        \left\|\del_{ij}f\right\|_{\infty}
        \E\left[
        \int^1_0 (1-t) 
        \left|G^i_{n}(\bx) - x_{i}\right|
        \left|G^j_{n}(\bx) - x_{j}\right| \, 
        \dd t
         : \left\|G_n(\x) - \x\right\|
        \ge \delta \right] \notag \\
        &\le \sum_{i,j=1}^d 
        \left\|\del_{ij}f\right\|_{\infty}
        \PP\big(\left\|G_n(\x) - \x\right\|
        \ge \delta \big). \notag 
    \end{align}
    Here, we have  
    \begin{align}
        \PP\big(
        \left\|G_n(\x) - \x\right\|
        \ge \delta 
        \big)
        &=
        \PP\left(
        \max_{i=1, 2, \dots, d}
        \left|
        G^i_n(\bx) - x_i
        \right|^2 \ge \frac{\delta^2}{d} \right) \notag\\
        &=
        \PP\left(
        \bigcup_{i=1}^d
        \left\{\left|
        G^i_n(\bx) - x_i
        \right|^2 \ge \frac{\delta^2}{d} \right\}\right)\notag\\
        &\le
        \sum_{i=1}^d\PP\left(
         \left|
        G^i_n(\bx) - x_i
        \right|^2 \ge \frac{\delta^2}{d}  \right)
        \le 2 d \exp\left(
        -\frac{n\delta^{4}}{2d^2}
        \right)\label{multi-Hoef}
    \end{align}
    by employing the Hoeffding inequality 
    (cf.~\cite[Theorem 1]{Hoeffding}). 
    This implies that
    \begin{equation}
    \label{J_n-C}
        \big|\E\left[
        J_n(\x) : \left\|G_n(\x) - \x \right\|
        \ge \delta \right]\big|
        \le 2 d^3 \max_{i, j=1,2,\dots,d}
        \left\|\del_{ij}f\right\|_{\infty}
        \exp\left(
        -\frac{n\delta^{4}}{2 d^2}
        \right).
    \end{equation}
    By combining \eqref{J_n-A} with 
    \eqref{J_n-B} and \eqref{J_n-C}, 
    and by letting $n \to \infty$, we obtain 
    \begin{align}
        &\left|n\big(B_{d, n} f(\x) - f(\x)\big) - \A_d f(\x)\right| \notag \\
        &\le \frac{\ve d^2}{8} 
        + 2 n d^3 \max_{i, j=1,2,\dots,d}
        \left\|\del_{ij}f\right\|_{\infty}
        \exp\left(
        -\frac{n\delta^{4}}{2d^2}
        \right)
        \to \frac{\ve d^2}{8}. 
    \end{align}
    Since $\ve > 0$ is arbitrary, 
    we conclude the desired convergence after letting $\ve \searrow 0$.
        
    Next, suppose that each
    $\del_{ij}f$, $i, j = 1,2,\dots, d$, 
    is Lipschitz continuous.
    Then, it follows from \eqref{J_n-A} that
    \begin{align}\label{estimate-final}
        &\left|n\big(B_{d, n} f(\x) - f(\x)\big) - \A_d f(\x)\right| \nn\\
        &\le n 
        \max_{i, j=1, 2, \dots, d}
        \mathrm{Lip}\left(\del_{ij}f\right)
        \E\left[
        \sum_{i, j=1}^d 
        \int^1_0 t(1-t) 
        \left|G^i_{n}(\bx) - x_{i}\right|
        \left|G^j_{n}(\bx) - x_{j}\right|
        \left\|G_n(\x) - \x\right\| \, 
        \dd t
        \right]\nn\\
        &= \frac{n}{6}
        \max_{i, j=1, 2, \dots, d}
        \mathrm{Lip}\left(\del_{ij}f\right)
        \E\left[
        \left(
        \sum_{i=1}^d 
        \left|G^i_{n}(\bx) - x_{i}\right|
        \right)^2
        \left\|G_n(\x) - \x\right\|
        \right]\nn\\
        &\le \frac{n d}{6}
        \max_{i, j=1, 2, \dots, d} 
        \mathrm{Lip}\left(\del_{ij}f\right)
        \E\left[
        \left\|G_n(\x) - \x\right\|^3
        \right] \nn\\
        &\le \frac{n d}{6}
        \max_{i, j=1, 2, \dots, d} 
        \mathrm{Lip}\left(\del_{ij}f\right)
        \E\left[
        \left\|G_n(\x) - \x\right\|^4 
        \right]^{3/4}, \qquad n \in \N, \, \x \in \Delta_{d-1},
    \end{align}
    where we apply the Jensen inequality 
    for the final line. 
    Then, we use the moment estimate 
    \begin{equation}\label{Eq;:binomial-4th}
    \E\left[\left(S_{n}^i(x_i) - n x_i\right)^{4}\right]
    =nx_i(1-x_i)(1-6x_i+6x_i^2+3nx_i-3nx_i^2)
    \le \frac{3n^2}{16}
    \end{equation}
    to deduce that
    \begin{align*}
        \E\left[
        \left\|G_n(\x) - \x\right\|^4\right]
        &=\frac{1}{n^4}\E\left[\left\|S_n(\x) - n \x \right\|^{4}\right]\\
        &\le \frac{1}{n^4}\E\left[\left(\sum^{d}_{i=1} \left(S_{n}^i(x_i) - n x_i\right)^{2}\right)^{2}\right]\\
        &\le \frac{d}{n^4} \sum^{d}_{i=1} \E\left[\left(S_{n}^i(x_i) - n x_i\right)^{4}\right]
        \le \frac{3 d^2}{16n^2}. 
    \end{align*}
     Thus, \eqref{estimate-final} is going to be
    \begin{align*}
        \left|n\left(B_{d, n} f(\x) - f(\x)\right) - \A_d f(\x)\right|
        \le \left(\frac{d^{5/2}}{16 \cdot 3^{1/4}}
        \max_{i, j=1, 2, \dots, d}
        \mathrm{Lip}\left(\del_{ij}f\right) \right)
        \times \frac{1}{\sqrt{n}}
    \end{align*}
    for $n \in \N$ and $\x \in \Delta_{d-1}$,
    which is the very desired estimate \eqref{Eq:generator-estimate}.
\end{proof}

\subsection{Proof of Theorem \ref{Thm:Bernstein-1}}
We are going to show Theorem \ref{Thm:Bernstein-1}
by making use of Trotter's approximation theorem.  
See Proposition \ref{Prop:Trotter} for more details.

\begin{proof}[Proof of Theorem {\rm \ref{Thm:Bernstein-1}}]
    We split the proof into three parts. 
    
    \vspace{2mm}
    \noindent
    {\bf Step 1.} 
    Since $C^2(\Delta_{d-1}) \subset \Dom(\mathcal{A}_d)$ and $C^2(\Delta_{d-1})$ is dense 
    in $C(\Delta_{d-1})$, we know that $\mathcal{A}_d$ is densely defined. 
    By virtue of Lemma \ref{Lem:Altomare-Campiti}
    and the Lumer–Phillips theorem (cf.~\cite[Theorem 3.1]{LF}), it turns out that 
    the closure of the differential operator $\mathcal{A}_d$ is dissipative. 
    This implies that the operator $I-\ol{\mathcal{A}}_d$ is invertible. 
    Since $C^2(\Delta_{d-1})$ is a core for $\overline{\mathcal{A}}_d$ 
    by Lemma \ref{Lem:Altomare-Campiti},
    $(I-\overline{\mathcal{A}}_d)(C^2(\Delta_{d-1}))$ is dense in $C(\Delta_{d-1})$. 
    Then, we apply Trotter's approximation theorem to conclude that 
        \begin{equation}\label{Eq:Bernstein-semigroup-conv1}
            \lim_{n \to \infty}
            \left\|(B_{d,n})^{\lfloor nt \rfloor}f 
            - {\sf T}_tf\right\|_\infty=0,
            \qquad f \in C(\Delta_{d-1}), \,\, t \ge 0. 
        \end{equation}

    \vspace{2mm}
    \noindent
    {\bf Step 2.} We here show that the contraction $C_0$-semigroup 
    $({\sf T}_t)_{t \ge 0}$ coincides with the diffusion semigroup 
    corresponding to the stochastic differential equation \eqref{SDE:d-Bernstein}. 
    For $ \x=(x_1, x_2, \dots, x_d) \in \Delta_{d-1} $ and 
    $\by=(y_1, y_2, \dots, y_d) \in \mathcal{I}$,
    we have 
    \begin{align}
        &n\E\left[
        (H_n^{N+1}(\x) - H_n^N(\x))
        (H_n^{N+1}(\x) - H_n^N(\x))^{\sf T} \, \middle| \, H_n^N(\x) = \by
        \right] \nonumber\\
        &=n\E\left[ (G_n(\by)-\by)(G_n(\by)-\by)^{{\sf T}} \right]
        = \Big(y_i(\delta_{ij} - y_j)\Big)_{i, j= 1}^d
        \label{DifApp-A}
        \end{align}
    and
        \begin{align}
        &n\E\left[H_n^{N+1}(\x) - H_n^N(\x) \, 
        \middle| \, H_n^N(\x) = \y\right] 
        =n\E\left[ G_n(\by)-\by \right]
        = \bm{0}.
        \label{DifApp-B}
    \end{align}
    Furthermore, we have  
    \begin{equation}
    \label{DifApp-C}
        \PP\left(\left\|
        H_n^{N+1}(\x) - H_n^N(\x)
        \right\| > \ve \, \middle| \, H_n^N(\x) = \y\right)
        \le
        2 d \exp\left(\frac{n \ve^4}{2d^2}\right)
    \end{equation}
    for $ \ve > 0 $ in view of \eqref{multi-Hoef}. 
    Hence, it follows from \eqref{DifApp-A}, \eqref{DifApp-B} and 
    \eqref{DifApp-C} that $ (H_n^{\lfloor nt \rfloor})_{t\ge 0}$, $n=1, 2, 3, \dots$, 
    converges weakly to the diffusion process $ (\mathsf{X}_t)_{t\ge 0} $
    which solves \eqref{SDE:d-Bernstein} as $ n \to \infty$. 
    Here, we applied the convergence criteria for sequences of 
    Markov chains given in \cite[Lemma 11.2.3]{SV}.
    In other words, we have obtained 
    \begin{equation}\label{Eq:Bernstein-semigroup-conv2}
    \lim_{n \to \infty}
            \left\|(B_{d,n})^{\lfloor nt \rfloor}f - \E\left[f({\sf X}_t(\cdot)\right)]\right\|_\infty=0,
            \qquad f \in C(\Delta_{d-1}), \,\, t \ge 0.
    \end{equation}
    Therefore, we conclude that the $C_0$-semigroup $({\sf T}_t)_{t \ge 0}$
    coincides with the diffusion semigroup of \eqref{SDE:d-Bernstein}
    once we combine \eqref{Eq:Bernstein-semigroup-conv1}
    with \eqref{Eq:Bernstein-semigroup-conv2}. 
    
    \vspace{2mm}
    \noindent
    {\bf Step 3.}  
    In the sequel, we assume both {\bf (A1)} and {\bf (A2)}. 
    We put
    \[
    \psi_n(f) := \left(\frac{d^{5/2}}{16 \cdot 3^{1/4}}
    \max_{i, j=1, 2, \dots, d}
    \mathrm{Lip}\left(\del_{ij}f\right) \right)
    \times \frac{1}{\sqrt{n}}, \quad 
    \varphi_n(f) := \psi_n(f) + \|\A_d f\|_{\infty}
    \]
    for $n \in \N$.
    Then, it holds that
    \[
    \|n(B_{d,n}f - f)\|_{\infty} \le \varphi_n(f),\qquad
    \|n(B_{d,n}f - f) - \A f\|_{\infty} \le \psi_{n}(f),
    \qquad n \in \N,
    \]
    and $ \psi_n(f) \to 0$ as $ n\to \infty $.
    Then, Proposition 
    \ref{Prop:Trotter-rate} immediately yields the
    desired rate of convergence \eqref{Eq:Bernstein-rate}.
\end{proof}

We should note that the rate of convergence of 
limit theorems for the 
iterate of $B_{d,n}$
including the Voronovskaya-type estimates 
have been discussed in various settings. 
We refer to e.g., \cite{MR}
for the case of $f \in C^3(\Delta_{d-1})$, and 
\cite{CT} for the case of 
$f \in C^2(\Delta_{d-1})$ satisfying that each $\del_{ij}f$
is $\alpha$-H\"older continuous with $\alpha \in (0, 1)$. 
In view of these studies, 
Lemma \ref{Lem:Voronovskaya-Berstein} and 
Theorem \ref{Thm:Bernstein-1} do not seem
to be new so much. However, our proof is heavily based on 
the probabilistic expression \eqref{Eq:d-Bernstein-expectation} 
of the multidimensional 
Bernstein operator, which reveals a probabilistic
interpretation \eqref{Eq:Bernstein-coinidence} of some limiting phenomena
behind the iterates of the $d$-dimensional Bernstein operator. 

It is also worth mentioning the study of 
complete asymptotic expansion of the $d$-dimensional 
Bernstein operator acting on $\Delta_{d-1}$ in 
\cite{AI}, where all coefficients in the expansion
are written down explicitly. 
The asymptotic expansion may allow us to obtain the 
Voronovskaya-type estimate directly. However, we do not 
take this approach since it might be difficult 
to extract any interesting probabilistic information from 
the asymptotic expansion.

\subsection{Proof of Theorem \ref{Thm:Bernstein-2}}
We are ready for the proof of Theorem \ref{Thm:Bernstein-2}.

\begin{proof}[Proof of Theorem {\rm \ref{Thm:Bernstein-2}}]

    Fix $\x=(x_1, x_2, \dots, x_d) \in \Delta_{d-1}$. 
    In order to show Theorem \ref{Thm:Bernstein-2},
    it is sufficient to show 
    the following two claims 
    (see e.g., \cite[Theorem 4.15]{KS}). 
    
    \begin{itemize}
    \item[{\bf (P1):}] The finite-dimensional distribution of
    $\mathcal{H}_\cdot^{(n)}(\x)$ converges to that of 
    ${\sf X}_\cdot(\x)$. Namely, it holds that 
    \begin{equation*}
        \left(\mathcal{H}_{t_1}^{(n)}(\x), \mathcal{H}_{t_2}^{(n)}(\x), \dots
        \mathcal{H}_{t_N}^{(n)}(\x)\right) 
        \to 
        \left(\mathsf{X}_{t_1}(\x), \mathsf{X}_{t_2}(\x) \dots
        \mathsf{X}_{t_N}(\x)\right) \quad \text{in law}
    \end{equation*}
    for $ N \in \N $ and 
    $ 0 \le t_1 < t_2 < \dots < t_N $. 
    
    \vspace{2mm}
    \item[{\bf (P2):}] 
    The sequence $ \{\PP \circ (\mathcal{H}^{(n)}_{\cdot})^{-1}\}_{n=1}^{\infty} $
    is tight in $C_{\bx}^{\alpha\text{{\rm -H\"ol}}}([0, \infty), \R^d)$ for all $\alpha<1/2$.
    \end{itemize}
    However, we easily see that 
    Theorem \ref{Thm:Bernstein-1} implies that 
    {\bf (P1)} is true (see e.g., \cite[Theorem 17.25]{Kallenberg}). 
    Therefore, we only to concentrate on the proof of 
    {\bf (P2)}. Our goal is then to show that 
    there exists some positive constant $ C > 0 $ independent of $n$ such that
    \begin{equation}
    \label{tight}
        \E\left[
        \left\|
        \mathcal{H}^{(n)}_{t}(\x) - \mathcal{H}^{(n)}_{s}(\x)
        \right\|^{2\beta}\right]
        \le C(t-s)^{\beta}, \qquad
        n \in \N, \,\, 0 \le s \le t, \, \beta \in \N.
    \end{equation}
    Once the moment estimate \eqref{tight} is established, the celebrated Kolmogorov continuity criterion implies that the sequence $ \{\PP \circ (\mathcal{H}^{(n)}_{\cdot})^{-1}\}_{n=1}^{\infty} $
    is tight in $C_{\bx}^{\alpha\text{{\rm -H\"ol}}}([0, \infty), \R^d)$ for all $\alpha<(\beta-1)/2\beta$. 
    Since $\beta$ can be chosen arbitrarily, 
    we conclude {\bf (P2)}. 
    
    \vspace{2mm}
    \noindent
    {\bf Step 1.}
    Let $\beta \in \N$. 
    At first, we show that 
    \begin{equation}
    \label{tight-1}
        \E\left[\left\|
        \mathcal{H}^{(n)}_{\ell/n}(\x) - \mathcal{H}^{(n)}_{k/n}(\x)
        \right\|^{2\beta}\right]
        \le
        C\left(\frac{\ell-k}{n}\right)^\beta,
        \qquad n \in \N, \,\, k, \ell \in \N_0,\,\, k \le \ell.
    \end{equation}
    for some $C > 0$ independent of $n$. 
    Since the Markov chain 
    $ \{H^N_n(\x)\}_{N=1}^\infty $ is a martingale by
    Lemma \ref{prop:martingale}, 
    we can use the Burkholder--Davis--Gundy inequality to get
    \begin{align}
        \E\left[\left\|
        \mathcal{H}^{(n)}_{\ell/n}(\x) - \mathcal{H}^{(n)}_{k/n}(\x)
        \right\|^{2\beta}\right]
        &=
        \E\left[\left\|H^\ell_n(\x) - H^k_n(\x)\right\|^{2\beta}\right]
        \notag\\
        &\le
        C_{2\beta}\,
        \E\left[ \left(\sum_{j=k}^{\ell-1}
        \left\|H_n^{j+1}(\x) - H_n^{j}(\x)\right\|^2
        \right)^{\beta}\right]\label{tight-1A},
    \end{align}
    where $ C_{2\beta}$ stands for the positive constant which appears in the upper bound for the Burkholder--Davis--Gundy inequality with the 
    exponent $ 2\beta $.  
    Then, it follows from the Markov property of 
    $ \{H^N_n(\x)\}_{N=1}^\infty $ that 
    \begin{align}
        \E\left[ \left(\sum_{j=k}^{\ell-1}
        \left\|H_n^{j+1}(\x) - H_n^{j}(\x)\right\|^2
        \right)^{\beta}\right]
        &= (\ell - k)^\beta \, 
        \E\left[
        \left\|G_n(\x) - \x\right\|^{2\beta}
        \right]\notag\\
        &\le d^{\beta-1}(\ell-k)^\beta \,
        \sum_{i=1}^d
        \E\left[
        \left|G_n^i(\bx) - x_i \right|^{2\beta}
        \right].\label{tight-1B}
    \end{align}
    By virtue of Theorem \ref{Thm:binomial-moment}
    in Appendix \ref{App:moments}, 
    we can find a constant $ C > 0 $ independent of 
    $n \in \N$ such that 
    \[
    \E\big[(S_n(x_i) - n x_i)^{2\beta}\big] \le C n^\beta, \qquad i=1, 2, \dots, d.  
    \]
    Hence, it holds that 
    \begin{equation}
    \label{tight-1C}
        \E\left[(G_n^i(\bx) - x_i)^{2\beta}\right] \le \frac{C}{n^\beta}.
    \end{equation}
    By combining \eqref{tight-1A} with 
    \eqref{tight-1B} and 
    \eqref{tight-1C}, we reach the moment estimate \eqref{tight-1}. 
    
    \vspace{2mm}
    \noindent
    {\bf Step 2.}
    Next, we aim to show \eqref{tight}.
    Let $0 \le s \le t$. 
    We take $ 1 \le k \le \ell $ satisfying 
     $ k/n \le s < (k+1)/n $ and $ \ell/n \le s < (\ell+1)/n $.
    Since the stochastic process $ (\mathcal{H}_{t}^{(n)}(\x))_{t \ge 0} $ is defined through the 
    linear interpolation, we have that 
    \begin{align*}
        \left\|\mathcal{H}^{(n)}_{(k+1)/n}(\x) - \mathcal{H}^{(n)}_{s}(\x)\right\|
        &=
        (k - n s)
        \left\|\mathcal{H}^{(n)}_{(k+1)/n}(\x) - \mathcal{H}^{(n)}_{k/n}(\x)\right\|,\\
        \left\|\mathcal{H}^{(n)}_{t}(\x) - \mathcal{H}^{(n)}_{\ell/n}(\x)\right\|
        &=
        (n t - \ell)
        \left\|\mathcal{H}^{(n)}_{(\ell+1)/n}(\x) - \mathcal{H}^{(n)}_{\ell/n}(\x)\right\|.
    \end{align*}
    By using \eqref{tight-1} and the triangle inequality, we obtain
    \begin{align*}
        &\E\left[
        \left\|
        \mathcal{H}^{(n)}_{t}(\x) - \mathcal{H}^{(n)}_{s}(\x)
        \right\|^{2\beta}\right]\\
        &\le 
        3^{2\beta-1}\left\{
        (n t - l)^{2\beta} \times C\left(\frac{1}{n}\right)^{\beta}
        + C\left(\frac{l - k - 1}{n}\right)^{\beta}
        + (k + 1 - n s)^{2\beta} \times C\left(\frac{1}{n}\right)^{\beta}
        \right\}\\
        &\le 
        C\left\{
        \left(t - \frac{l}{n}\right)^\beta 
        + \left(\frac{l}{n} - \frac{k+1}{n}\right)^\beta + 
        \left(\frac{k+1}{n} - s\right)^\beta
        \right\}
        \le C(t - s)^{\beta}, \qquad n \in \N,
    \end{align*}
    which is what we want to show.
\end{proof}

It should be noted that there are several
functional limit theorems for the 
(possibly multidimensional) Wright--Fisher models.
Guess  showed in \cite{Guess} that the sequence of 
properly scaled
stochastic process induced  by the one-dimensional
Wright--Fisher model converges weakly to
a diffusion process in $D([0, \infty); \, \R)$, 
the space of all c\`adl\`ag functions on $[0, \infty)$
equipped with the Skorokhod $J_1$-topology. 
On the other hand, Sato treated
highly generalized situations related to 
multidimensional Wright--Fisher models and 
obtained the functional limit theorem 
for the models in $C([0, \infty); \, \R^d)$ in \cite{Sato} . 
We emphasize that Sato found a solution to a 
martingale problem corresponding to the Wright--Fisher model
to deduce the desired convergence. 
In our case, we have showed the convergence of the discrete
semigroup generated by the Wright--Fisher model to the 
diffusion semigroup $({\sf T}_t)_{t \ge 0}$. 
This readily implies the convergence of 
finite-dimensional distribution of the Wright--Fisher diffusion, 
which gives one of sufficient condition to obtain the functional
limit theorem. In this sense, we provide another kind of 
approach to such a convergence from both probabilistic and 
functional-analytic perspectives.

\section{{\bf Proofs of Theorems \ref{Thm:Bernstein-general-1} and 
\ref{Thm:Bernstein-general-2}}}
\label{Sect:Proofs-2}

\subsection{The Voronovskaya-type theorem for $B_{d,n}^{(\mathbf{q}_n)}$}

Throughout this section, we always assume {\bf (Q1)}. 
For $n \in \mathbb{N}, i=1, 2, \dots, d$ and 
$\bx=(x_1, x_2, \dots, x_d) \in \Delta_{d-1}$, we put
    \[
    h_n^i(\bx):=x_i^{(\mathbf{q}_n)} - x_i 
    = \sum_{j=1}^d q_{ji}^{(n)}x_j.
    \]
Then, we note that 
    \begin{align}
    |h_n^i(\x)| &\le \sum_{j=1}^d \left|\frac{1}{n}q_{ji}+\frac{C}{n^\gamma}\right|
    \le \frac{C}{n}, 
    \qquad i=1, 2, \dots, d, \, 
    n \in \N, \, \x \in \Delta_{d-1}, 
    \label{h-estimate} 
    \end{align}
for some $C>0$ independent of $n \in \N$. 

The following is the Voronovskaya-type theorem
for the Bernstein operator associated with 
$\mathbf{q}_n$ satisfying {\bf (Q1)}.

\begin{lm}
\label{Lem:Voronovskaya-Berstein-genreal}
Suppose that $\mathbf{q}_n=\{q_{ij}^{(n)}\}_{i,j=1}^d$ satisfies {\bf (Q1)}. 
Then, for every $f \in C^2(\Delta_{d-1})$, we have 
    \[
    \lim_{n \to \infty}
    \|n(B_{d,n}^{(\mathbf{q}_n)}f - f) 
    - \mathcal{A}_d^{(\mathbf{q})}f\|_\infty=0,
    \]
where $\mathcal{A}_d^{(\mathbf{q})}$ is the 
second order differential operator acting on 
$C^2(\Delta_{d-1})$ given by \eqref{Eq:generator-Bernstein-general}. 
Moreover, if all second partial derivatives of 
$f \in C^2(\Delta_{d-1})$ are Lipschitz, we have
    \begin{equation}\label{Eq:generator-estimate-general}
    \|n(B_{d,n}^{(\mathbf{q}_n)}f - f) - \mathcal{A}_d^{(\mathbf{q})}f\|_\infty
        \le  \frac{C}{\sqrt{n}}, 
    \qquad n \in \N,
    \end{equation}
    for some $C>0$ depending on 
    $d$, $\|\del_i f\|_\infty$, 
    $\|\del_{ij}f\|_\infty$ and 
    $\mathrm{Lip}(\del_{ij}f)$ for 
    $i, j=1, 2, \dots, d$. 
\end{lm}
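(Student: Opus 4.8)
The plan is to follow the proof of Lemma~\ref{Lem:Voronovskaya-Berstein} almost verbatim, the one new feature being that the underlying multinomial is now centred at $\bx^{(\mathbf{q}_n)}$ rather than at $\bx$, and that this shift is precisely what generates the first-order (drift) part of $\mathcal{A}_d^{(\mathbf{q})}$. Writing $G_n(\bx^{(\mathbf{q}_n)})=\tfrac1n S_n(\bx^{(\mathbf{q}_n)})$ and $\mathbf{v}:=G_n(\bx^{(\mathbf{q}_n)})-\bx$, the representation \eqref{Eq:q-Bernstein} reads $B_{d,n}^{(\mathbf{q}_n)}f(\bx)=\E[f(\bx+\mathbf{v})]$, and I would Taylor-expand $f$ around $\bx$ to second order with integral remainder to get
\[
n\big(B_{d,n}^{(\mathbf{q}_n)}f(\bx)-f(\bx)\big)=n\sum_{i=1}^d\E[v_i]\,\del_i f(\bx)+n\sum_{i,j=1}^d\E\!\left[\int_0^1(1-t)v_iv_j\,\del_{ij}f(\bx+t\mathbf{v})\,\dd t\right],
\]
and then compare each piece with the two sums defining $\mathcal{A}_d^{(\mathbf{q})}f(\bx)$ in \eqref{Eq:generator-Bernstein-general}.

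The first step is the moment bookkeeping. Decomposing $v_i=(G_n^i(\bx^{(\mathbf{q}_n)})-x_i^{(\mathbf{q}_n)})+h_n^i(\bx)$ and using that $S_n(\bx^{(\mathbf{q}_n)})$ is multinomial with parameters $n$ and $\bx^{(\mathbf{q}_n)}$, one finds $\E[v_i]=h_n^i(\bx)$ and $\E[v_iv_j]=\tfrac1n x_i^{(\mathbf{q}_n)}(\delta_{ij}-x_j^{(\mathbf{q}_n)})+h_n^i(\bx)h_n^j(\bx)$ (the cross terms vanishing by mean-zero); moreover the binomial fourth-moment bound \eqref{Eq;:binomial-4th}, applied with $x_i$ replaced by $x_i^{(\mathbf{q}_n)}\in[0,1]$, together with \eqref{h-estimate}, yields $\E[\|\mathbf{v}\|^4]\le C/n^2$. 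The estimate \eqref{h-estimate}, i.e.\ $|h_n^i(\bx)|\le C/n$ uniformly in $\bx$, is what keeps everything under control: replacing the base point $\bx^{(\mathbf{q}_n)}$ by $\bx$ in any bounded quantity costs only $O(1/n)$, and the Hoeffding bound \eqref{multi-Hoef} for $\|G_n(\bx^{(\mathbf{q}_n)})-\bx^{(\mathbf{q}_n)}\|$ still gives an exponentially small bound for $\PP(\|\mathbf{v}\|\ge\delta)$ once $n$ is large.

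For the term-by-term comparison: (i) $n\E[v_i]=nh_n^i(\bx)=\sum_j nq_{ji}^{(n)}x_j$, which by the quantitative clause of {\bf (Q1)} differs from $\sum_j q_{ji}x_j$ by at most $Cn^{1-\gamma}$ uniformly, so the first sum converges to (and, when $\gamma\ge 3/2$, is within $C/\sqrt n$ of) the drift part $\sum_i(\sum_j q_{ji}x_j)\del_i f(\bx)$. (ii) In the remainder split $\del_{ij}f(\bx+t\mathbf{v})=\del_{ij}f(\bx)+\big(\del_{ij}f(\bx+t\mathbf{v})-\del_{ij}f(\bx)\big)$; the $\del_{ij}f(\bx)$ contribution equals $\tfrac n2\sum_{ij}\E[v_iv_j]\del_{ij}f(\bx)$, which by the covariance identity is $\tfrac12\sum_{ij}x_i^{(\mathbf{q}_n)}(\delta_{ij}-x_j^{(\mathbf{q}_n)})\del_{ij}f(\bx)+\tfrac n2\sum_{ij}h_n^i(\bx)h_n^j(\bx)\del_{ij}f(\bx)$, and by \eqref{h-estimate} this is within $O(1/n)$ of the diffusion part $\tfrac12\sum_{ij}x_i(\delta_{ij}-x_j)\del_{ij}f(\bx)$. (iii) The leftover term $n\sum_{ij}\E[\int_0^1(1-t)v_iv_j(\del_{ij}f(\bx+t\mathbf{v})-\del_{ij}f(\bx))\,\dd t]$ is the exact analogue of the $J_n$-term of Lemma~\ref{Lem:Voronovskaya-Berstein} and is treated identically: for the qualitative claim split over $\{\|\mathbf{v}\|<\delta\}$ (uniform continuity of $\del_{ij}f$, Schwarz, the second-moment bound) and $\{\|\mathbf{v}\|\ge\delta\}$ (Hoeffding); for the quantitative claim Lipschitz continuity of $\del_{ij}f$ bounds it by $\tfrac{nd}{6}\max_{ij}\mathrm{Lip}(\del_{ij}f)\,\E[\|\mathbf{v}\|^3]\le\tfrac{nd}{6}\max_{ij}\mathrm{Lip}(\del_{ij}f)\,\E[\|\mathbf{v}\|^4]^{3/4}\le C/\sqrt n$.

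Collecting (i)--(iii), the total error is $O(n^{1-\gamma})+O(1/n)+O(1/\sqrt n)\to 0$, which proves the first assertion, and under the Lipschitz hypothesis it reduces to the stated $C/\sqrt n$ with $C$ depending only on $d$, $\|\del_i f\|_\infty$, $\|\del_{ij}f\|_\infty$ and $\mathrm{Lip}(\del_{ij}f)$. I do not expect a genuine obstacle: the whole difficulty is the bookkeeping of the new first-order and $h_n^ih_n^j$ contributions and checking, via \eqref{h-estimate}, that recentring the multinomial at $\bx^{(\mathbf{q}_n)}$ rather than $\bx$ leaves the concentration and moment estimates of Lemma~\ref{Lem:Voronovskaya-Berstein} intact; the only point warranting care is whether the drift error $n^{1-\gamma}$ is dominated by $n^{-1/2}$, which is the case precisely when $\gamma\ge 3/2$.
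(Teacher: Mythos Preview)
Your proposal is correct and follows essentially the same approach as the paper: Taylor expansion with integral remainder, the same moment identities (your $\E[v_i]=h_n^i$ and covariance formula coincide with the paper's \eqref{Bernstein-1st-moment-general}--\eqref{Bernstein-2nd-moment-general}), the same three-term decomposition into drift error, covariance correction, and a $J_n$-type remainder handled by uniform continuity plus Hoeffding (qualitative) or Lipschitz plus the fourth-moment bound (quantitative). Your closing caveat is in fact sharper than the paper's treatment: the drift error is indeed $O(n^{1-\gamma})$, so the $C/\sqrt{n}$ rate genuinely needs $\gamma\ge 3/2$, whereas the paper records this term as $C/n^{\gamma}$ in \eqref{Est-I_1} and does not flag the constraint on~$\gamma$.
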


\begin{proof}
    Let us put
    \[
    \begin{aligned}
    G_n^{(\mathbf{q}_n)}(\x) 
    &=\Big((G_n^{(\mathbf{q}_n)})^1(\bx), (G_n^{(\mathbf{q}_n)})^2(\bx), 
    \dots, (G_n^{(\mathbf{q}_n)})^d(\bx)\Big) \\
    &=\left(
    \frac{1}{n}S_n^1(x_1^{(\mathbf{q}_n)}), 
    \frac{1}{n}S_n^2(x_2^{(\mathbf{q}_n)}),
     \dots, \frac{1}{n}S_n^d(x_d^{(\mathbf{q}_n)})\right)
    \end{aligned}
    \]
    for $\x=(x_1, x_2, \dots, x_d) \in \Delta_{d-1}$. 
    Then, we have
    \begin{equation}
    \label{Bernstein-1st-moment-general}
        \E\left[(G_n^{(\mathbf{q}_n)})^i(\bx)-x_i\right] = 
        \sum_{j=1}^d q_{ji}^{(n)}x_j=
        h_n^i (\bx)
    \end{equation}
    and
    \begin{align}
    \label{Bernstein-2nd-moment-general}
        &\E\left[
        \left((G_n^{(\mathbf{q}_n)})^i(\bx) - x_i\right)
        \left((G_n^{(\mathbf{q}_n)})^j(\bx) - x_j\right)
        \right] \notag \\
        &= \frac{1}{n}x_i^{(\mathbf{q}_n)}(\delta_{ij} - x_j^{(\mathbf{q}_n)})
        +h_n^i(\bx) h_n^j(\bx) \notag \\  
        &= \frac{1}{n}x_i(\delta_{ij} - x_j) 
        +\frac{1}{n}h_n^i(\bx)(\delta_{ij}-x_j)
        - \frac{1}{n}x_ih_n^j(\bx)
        +\left(1+\frac{1}{n}\right)
        h_n^i(\bx) h_n^j(\bx)
    \end{align}
    for $ i,j = 1,2, \dots, d $ and 
    $ \x = (x_1, x_2, \dots, x_d) \in \Delta_{d-1} $.
    It then follows from 
    \eqref{Bernstein-1st-moment-general}, 
    \eqref{Bernstein-2nd-moment-general}
    and Taylor's formula that
    \begin{align}
        &n\big(B_{d, n}^{(\mathbf{q}_n)} f(\x) - f(\x)\big) - \A_d^{(\mathbf{q})} f(\x) \notag \\
        &= \sum_{i=1}^d \left(\sum_{j=1}^d(nq_{ji}^{(n)} -  q_{ji})x_j\right)
        \del_i f(\bx)  \notag \\
        &\hspace{1cm}+\frac{1}{2}\sum_{i,j=1}^d 
        \Big(h_n^i(\bx)(\delta_{ij}-x_j)-x_ih_n^j(\bx)
        +(n+1)h_n^i(\bx)h_n^j(\bx)\Big)\del_{ij}f(\bx) \notag \\
        &\hspace{1cm}+n\E \Bigg[ 
        \sum^{d}_{i,j=1}
        \int^{1}_{0} 
        (1-t)
        \left((G_n^{(\mathbf{q}_n)})(\bx) - x_{i}\right)
        \left((G_n^{(\mathbf{q}_n)})(\bx) - x_{j}\right) \notag\\
        &\hspace{1cm}\times 
        \left(
        \del_{ij}f
        \left(\x + t\left(G_n^{(\mathbf{q}_n)}(\x) - \x\right)\right)
        - \del_{ij}f(\x) \right) \,
        \dd t
        \Bigg] \notag \\
        &=: I_n^{(1)}(\bx) + I_n^{(2)}(\bx) + n\mathbb{E}[J_n(\bx)] , \qquad \bx \in \Delta_{d-1}.  
        \label{J_n-A-general}
    \end{align}
        By using {\bf (Q1)}, we easily have 
        \begin{align}\label{Est-I_1}
           |I_n^{(1)}(\x)|
           &\le \sum_{i=1}^d \left(\sum_{j=1}^d|nq_{ji}^{(n)} -  q_{ji}|\right)
        \|\del_i f\|_\infty 
        \le \frac{C}{n^\gamma},
        \qquad n \in \N, \, \x \in \Delta_{d-1},
        \end{align}
        where the constant $C>0$ depends on  
        $d$ and $\|\del_i f\|_\infty$, $i=1, 2, \dots, d$. 
        Moreover, it follows from \eqref{h-estimate}
        that 
        \begin{align}\label{Est-I_2}
           |I_n^{(2)}(\x)| 
           &\le \frac{1}{2}\sum_{i,j=1}^d \Big(
           2|h_n^i(\x)|+|h_n^j(\x)|+(n+1)|h_n^i(\x)||h_n^j(\x)|
           \Big)
        \|\del_{ij} f\|_\infty \le \frac{C}{n}
        \end{align}
        for $n \in \N$ and $\x \in \Delta_{d-1}$. 
        Note that the constant $C>0$ in the right-hand side of \eqref{Est-I_2}
        depends on $d$ and $\|\del_{ij} f\|_\infty$, $i, j=1, 2, \dots, d$. 

    The rest is to show that $n\mathbb{E}[J_n(\bx)]$
    converges to $0$ as $n \to \infty$ uniformly in $\bx \in \Delta_{d-1}$. 
    Due to the uniform continuity of  
    $ \del_{ij}f $ on $\Delta_{d-1}$, 
    for any $ \ve > 0 $, 
    there is some $ \delta > 0 $
    such that $ 0 < \|\x - \y\| < \delta $, 
    $ \x, \y \in \Delta_{d-1} $, 
    implies $ |\del_{ij}f(\x) - \del_{ij}f(\y)| < \ve$.
    By using \eqref{h-estimate},  it holds that
    \begin{align}
        &n\left|\E\left[
        J_n(\x) : 
        \left\|G_n^{(\mathbf{q}_n)}(\x) - \x\right\| < \delta 
        \right]\right| \notag \\
        &\le \frac{\ve n}{2} \sum_{i,j=1}^d 
        \left\{
        \E\left[\left|(G_n^{(\mathbf{q}_n)})^i(\bx) - x_{i}\right|^2\right]
        \E\left[\left|(G_n^{(\mathbf{q}_n)})^j(\bx) - x_{j}\right|^2
        \right]
        \right\}^{1/2} \notag\\
        &\le \frac{\ve}{2} \sum_{i,j=1}^d 
        \left(
        \left\{x_i^{(\mathbf{q}_n)}(1 - x_i^{(\mathbf{q}_n)})
        +nh_n^i(\bx)\right\}
        \left\{x_j^{(\mathbf{q}_n)}(1 - x_j^{(\mathbf{q}_n)})
        +nh_n^j(\bx)\right\}
        \right)^{1/2} \notag \\
        &\le \frac{\ve d^2}{2} 
        \left( \frac{1}{4}+C\right), \qquad \bx \in \Delta_{d-1}. 
        \label{J_n-B-general}
    \end{align}
    Furthermore, it holds that
    \begin{align}
        &\PP\left(
        \left\|G_n^{(\mathbf{q}_n)}(\x) - \x\right\|
        \ge \delta 
        \right) \notag \\
        &\le
        \sum_{i=1}^d\PP\left(
        \left|
        (G_n^{(\mathbf{q}_n)})^i(\bx) - x_i
        \right|^2 \ge \frac{\delta^2}{d} \right)\notag\\
        &\le \sum_{i=1}^d\PP\left(
        \left|
        (G_n^{(\mathbf{q}_n)})^i(\bx) - x_i^{(\mathbf{q}_n)}
        \right|^2 \ge \frac{\delta^2}{2d}-\frac{C}{n^2}>0 \right)\notag\\
        &\le 2 d \exp\left(
        -\frac{n}{2}\left( \frac{\delta^2}{2d}-\frac{C}{n^2}\right)
        \right), \qquad \x \in \Delta_{d-1},
        \label{J_n-C-Hoeff}
    \end{align}
    for sufficiently large $n$, 
    by applying \eqref{h-estimate} and 
    the Hoeffding inequality. 
    Thus, we have
    \begin{align}
        &n\left|\E\left[
        J_n(\x) : \left\|G_n^{(\mathbf{q}_n)}(\x) - \x\right\|
        \ge \delta \right]\right| \notag \\
        &\le n\sum_{i,j=1}^d 
        \left\|\del_{ij}f\right\|_{\infty}
        \PP\left(\left\|G_n^{(\mathbf{q}_n)}(\x) - \x\right\|
        \ge \delta \right) \notag \\
        &\le 2d^3n\max_{i, j=1,2,\dots,d}
        \left\|\del_{ij}f\right\|_{\infty}
        \exp\left(
        -\frac{n}{2}\left( \frac{\delta^2}{2d}-\frac{C}{n^2}\right) 
        \right), \qquad  \bx \in \Delta_{d-1}, 
        \label{J_n-C-general}
    \end{align}
    for sufficiently large $n$. 
    By combining \eqref{J_n-B-general} with 
    \eqref{J_n-C-general}, we obtain that 
    $\|n\mathbb{E}[J_n(\cdot)]\|_\infty \to 0$ 
    as $n \to \infty$. 
    This uniform convergence together with 
    \eqref{Est-I_1} and \eqref{Est-I_2} 
    readily implies the desired convergence.

    Next, we assume that each
    $\del_{ij}f$, $i, j = 1,2,\dots, d$, 
    is Lipschitz continuous.
    Then, \eqref{J_n-A-general}, \eqref{Est-I_1} and \eqref{Est-I_2} lead to
    \begin{align}\label{Lip-est-general}
        &\left|n\big(B_{d, n}^{(\mathbf{q}_n)} f(\x) - f(\x)\big) - \A_d^{(\mathbf{q})} f(\x)\right| \nn\\
        &\le \frac{C}{n^\gamma}+\frac{C}{n}+n 
        \max_{i, j=1, 2, \dots, d}
        \mathrm{Lip}\left(\del_{ij}f\right)
        \E\Bigg[
        \sum_{i, j=1}^d 
        \int^1_0 t(1-t) \nn \\
        &\hspace{1cm}\times 
        \left|(G^{(\mathbf{q}_n)}_{n})^i(\bx) - x_{i}\right|
        \left|(G^{(\mathbf{q}_n)}_{n})^j(\bx) - x_{j}\right|
        \left\|G_n^{(\mathbf{q}_n)}(\x) - \x\right\| \, 
        \dd t
        \Bigg]\nn\\
        &\le\frac{C}{n}+ Cn \times 
        \E\left[
        \left\|G_n^{(\mathbf{q}_n)}(\x) - \x\right\|^4 
        \right]^{3/4}, \qquad n \in \N, \, \x \in \Delta_{d-1}.
    \end{align}
    Here, \eqref{Eq;:binomial-4th} and {\bf (Q1)}
    implies that
    \begin{align*}
        &\E\left[
        \left\|G_n^{(\mathbf{q}_n)}(\x) - \x\right\|^4\right] \\
        &\le \frac{d}{n^4} \sum^{d}_{i=1} \E\left[\left(S_{n}^i(x_i^{(\mathbf{q}_n)}) - n x_i\right)^{4}\right] \\
        &\le \frac{8d}{n^4}\left\{
        \E\left[ \left(S_{n}^i(x_i^{(\mathbf{q}_n)}) - n x_i^{(\mathbf{q}_n)}\right)^{4}\right]
        +(nx_i^{(\mathbf{q}_n)}-nx_i)^4\right\}\\
        &\le \frac{8d}{n^4} 
        \left(\frac{3n^2}{16}+C\right)
        \le \frac{C}{n^2}, \qquad \x \in \Delta_{d-1}. 
    \end{align*}
    Therefore, \eqref{Lip-est-general} 
    is going to be
    \begin{align*}
        \left|n\big(B_{d, n}^{(\mathbf{q}_n)} f(\x) - f(\x)\big) - \A_d^{(\mathbf{q})} f(\x)\right|
        \le  \frac{C}{\sqrt{n}}
    \end{align*}
    for $n \in \N$ and $\x \in \Delta_{d-1}$,
    where the constant $C>0$ depends on 
    $d$, $\|\del_i f\|_\infty$, 
    $\|\del_{ij}f\|_\infty$ and 
    $\mathrm{Lip}(\del_{ij}f)$ for 
    $i, j=1, 2, \dots, d$. 
\end{proof}

\subsection{Proofs of Theorems \ref{Thm:Bernstein-general-1} and 
\ref{Thm:Bernstein-general-2}}

Since we have established the Voronovskaya-type theorem 
for $B_{d, n}^{(\mathbf{q}_n)}$, it allows us to 
give the proof of Theorem \ref{Thm:Bernstein-general-1}.

\begin{proof}[Proof of Theorem {\rm \ref{Thm:Bernstein-general-1}}]
    
    We follow the same argument in the proof of Theorem \ref{Thm:Bernstein-1}.
    By virtue of  
    Lemmas \ref{Lem:core-general} and \ref{Lem:Voronovskaya-Berstein-genreal}, 
    Trotter's approximation theorem implies that
    \begin{equation}\label{Eq:Bernstein-general-semigroup-conv1}
        \lim_{n \to \infty}
        \left\|(B_{d,n}^{(\mathbf{q}_n)})^{\lfloor nt \rfloor}f 
        - {\sf T}_t^{(\mathbf{q}_n)} f\right\|_\infty=0,
        \qquad f \in C(\Delta_{d-1}), \,\, t \ge 0. 
    \end{equation}
    On the other hand, it follows from \eqref{Bernstein-2nd-moment-general} 
    and {\bf (Q1)} that
    \begin{align}
        &n\E\left[
        \left((H_n^{(\mathbf{q}_n)})^{N+1}(\x) 
        - (H_n^{(\mathbf{q}_n)})^N(\x)\right)_i
        \left((H_n^{(\mathbf{q}_n)})^{N+1}(\x) 
        - (H_n^{(\mathbf{q}_n)})^N(\x)\right)_j
        \, \middle| \, (H_n^{(\mathbf{q}_n)})^N(\x) = \by
        \right] \nonumber\\
        &=n\E\left[
        ((G_n^{(\mathbf{q}_n)})^{i}(\by) - y_i)
        ((G_n^{(\mathbf{q}_n)})^j(\by) - y_j)
        \right] \nonumber \\
        &= 
        y_i(\delta_{ij} - y_j) 
        +h_n^i(\by)(\delta_{ij}-y_j)
        - y_i h_n^j(\by)
        + (n + 1) h_n^i(\by) h_n^j(\by)
        \nonumber\\
        &\to 
        y_i(\delta_{ij} - y_j), \qquad i, j = 1, 2, \dots, d,
        \label{DifApp-general-A}
        \end{align}
    and
    \begin{align}
        &n\E\left[
        \left((H_n^{(\mathbf{q}_n)})^{N+1}(\x) 
        - (H_n^{(\mathbf{q}_n)})^N(\x) \right)_i
        \, \middle| \, (H_n^{(\mathbf{q}_n)})^N(\x) = \y\right] 
        \nonumber \\
        &=n\E\left[ (G_n^{(\mathbf{q}_n)})^i(\by) - y^{(\mathbf{q}_n)}_i
        + y^{(\mathbf{q}_n)}_i - y_i \right]
        \nonumber \\
        &= n \sum_{j=1}^d q_{ji}^{(n)} y_j
        \to 
        \sum_{j=1}^d q_{ji} y_j, \qquad i = 1, 2, \dots, d,
        \label{DifApp-general-B}
    \end{align}
    as $n \to \infty$, for $ \x=(x_1, x_2, \dots, x_d) \in \Delta_{d-1} $ and 
    $\by=(y_1, y_2, \dots, y_d) \in \mathcal{I}$.
    Moreover, by using \eqref{J_n-C-Hoeff}, we have  
    \begin{align}
        &\PP\left(\left\|
        (H_n^{(\mathbf{q}_n)})^{N+1}(\x) 
        - (H_n^{(\mathbf{q}_n)})^N(\x)
        \right\| > \ve 
        \, \middle| \, (H_n^{(\mathbf{q}_n)})^N(\x) = \y\right)
        \nonumber \\
        &\le
        2 d \exp\left(
        -\frac{n}{2}\left(\frac{\ve^2}{2d}-\frac{C}{n^2}\right)
        \right)
        \to 0
        \label{DifApp-general-C}
    \end{align}
    as $n \to \infty$, for every $ \ve > 0 $. 
    Then, due to \eqref{DifApp-general-A}, \eqref{DifApp-general-B}
    and \eqref{DifApp-general-C}, we can use \cite[Lemma 11.2.3]{SV} 
    to deduce that 
    $ \big((H_n^{(\mathbf{q}_n)})^{\lfloor nt \rfloor}\big)_{t\ge 0}$, 
    $n=1, 2, 3, \dots$, 
    converges weakly to the solution to  \eqref{SDE:d-Bernstein-general} as $ n \to \infty$.
    Thus, the combination of this weak convergence with 
    \eqref{Eq:Bernstein-general-semigroup-conv1} 
    obviously concludes \eqref{Eq:Bernstein-convergence-general}. 

    Under the assumptions {\bf (A3)} and {\bf (A4)}, 
    the rate of convergence \eqref{Eq:Bernstein-rate-general}
    is also obtained 
    by applying Proposition \ref{Prop:Trotter-rate}
    and \eqref{Eq:generator-estimate-general},
    similarly to the proof of 
    \eqref{Eq:Bernstein-rate}. 
\end{proof}

In the end of this section, we give the proof of Theorem \ref{Thm:Bernstein-general-2},
in which the martingale property (Proposition \ref{prop:martingale})
also plays a crucial role.

\begin{proof}[Proof of Theorem {\rm \ref{Thm:Bernstein-general-2}}]
    For the proof,
    it is sufficient to show the tightness of the sequence
    $ \big\{\PP \circ \big((\mathcal{H}^{(\mathbf{q}_n)}_{\cdot})^{(n)}\big)^{-1}
    \big\}_{n=1}^{\infty} $
    in $C_{\bx}^{\alpha\text{{\rm -H\"ol}}}([0, \infty), \R^d)$ for all $\alpha<1/2$.
    Let $\beta \in \N$. 
    By applying Proposition \ref{prop:martingale} and
    the Burkholder--Davis--Gundy inequality, we obtain
    \begin{align*}
        \E\left[\left\|
        (\mathcal{H}^{(\mathbf{q}_n)}_{\ell/n})^{(n)}(\x) 
        - (\mathcal{H}^{(\mathbf{q}_n)}_{k/n})^{(n)}(\x)
        \right\|^{2\beta}\right] 
        \le
        C(\ell - k)^{\beta}\,
        \E\left[ \left\|
        G^{(\mathbf{q}_n)}_n(\bx) - \bx^{(\mathbf{q}_n)}
        \right\|^{2\beta}\right],
    \end{align*}
    for $ n\in \N$ and $k, \ell \in \N_0$ with $k\le \ell$, 
    where $C>0$ is some constant independent of $n \in \mathbb{N}$.
    Then, Proposition \ref{Thm:binomial-moment} yields 
    \begin{align*}
        &\E\left[\left\|
        (\mathcal{H}^{(\mathbf{q}_n)}_{\ell/n})^{(n)}(\x) 
        - (\mathcal{H}^{(\mathbf{q}_n)}_{k/n})^{(n)}(\x)
        \right\|^{2\beta}\right] \\
        &\le
        Cd^{\beta-1}(\ell - k)^{\beta}\sum_{i=1}^d
        \E\left[ |
        (G^{(\mathbf{q}_n)}_n)^i(\bx) - x_i^{(\mathbf{q}_n)}
        |^{2\beta}\right] \le C\left(\frac{\ell-k}{n}\right)^{\beta}, 
        \qquad n \in \mathbb{N}. 
    \end{align*}
    Therefore, we can deduce that 
    \[
        \E\left[\left\|
        (\mathcal{H}^{(\mathbf{q}_n)}_{t})^{(n)}(\x) 
        - (\mathcal{H}^{(\mathbf{q}_n)}_{s})^{(n)}(\x)
        \right\|^{2\beta}\right] \le C(t-s)^\beta,
        \qquad n \in \mathbb{N}, \, 0 \le s \le t,
    \]
    for some $C>0$ independent of $n \in \mathbb{N}$, 
    by following the same argument 
    as in the proof of Theorem \ref{Thm:Bernstein-2}.
    This moment estimate readily implies the tightness of 
    $ \{\PP \circ ((\mathcal{H}^{(\mathbf{q}_n)}_{\cdot})^{(n)})^{-1}\}_{n=1}^{\infty} $
    in $C_{\bx}^{\alpha\text{{\rm -H\"ol}}}([0, \infty), \R^d)$ for all $\alpha<1/2$.
\end{proof}


\section{{\bf Proof of Theorem \ref{Thm:Bernstein-infinity-1}}}
\label{Sect:Fleming--Viot}

    In this section,  
    we review our infinite-dimensional framework and mention that two major examples 
    of measure-valued diffusions arising in populatin genetics are in scope of our framework.
    Moreover, we show the 
    Voronovskaya-type theorem for $\{n(B_{d_n,n}^{(\mathbf{q}_n)})-I)\}_{n=1}^\infty$ 
    and the uniform convergence of $(B_{d_n,n}^{(\mathbf{q}_n)})^{\lfloor nt \rfloor}$ 
    to the Fleming--Viot diffusion semigroup generated by $\mathfrak{A}$.

\subsection{Examples of mutation operators}



Recall that $E$ is a compact metric space and $\mathcal{P}(E)$ the set of 
all Borel probability measures on $E$, which is also compact. 
The main objective in this section is the 
linear operator $\mathfrak{A}$, acting on $C(\mathcal{P}(E))$, 
defined by \eqref{Eq:generator-FV}. 
In order to discuss limit theorems, we need to take a discretization 
$E^{(d_n)}=\{z_1^{(d_n)}, z_2^{(d_n)}, \dots, z_{d_n}^{(d_n)}\} \subset E$
along the sequence $\{d_n\}_{n=1}^\infty$ of dimensions satisfying {\bf (D)}. 
Furthermore, we also need to assume some technical but natural assumptions 
{\bf (Q2)} and {\bf (Q3)}
for the convergence of $\mathbf{q}_n=\{q_{ij}^{(n)}\}_{i,j=1}^{d_n}$
to the mutation operator $\mathcal{Q}$ of \eqref{Eq:generator-FV}, which generates a 
Feller semigroup on $C(E)$. 

Before giving the proofs of main results, 
it is worth mentioning that some historically 
important examples in the study of population genetics does satisfy 
both {\bf (Q2)} and {\bf (Q3)}.

\begin{ex}[Ohta--Kimura model]\normalfont\label{Ex:Ohta-Kimura}
Let $E=\R \cup \{\Delta\}$ be 
the one-point compactification of $\R$
and 
\[
\mathcal{Q}f(z)=\frac{\theta}{2}f''(z),
\qquad \Dom(\mathcal{Q})=\{f \in C(E) \mid (f-f(\Delta))|_{\R} \in C_c^2(\R)\},
\]
for some $\theta>0$, where 
$C_c^2(\R)$ denotes the set of 
twice differentaible functions 
$f : \R \to \R$ such that $f, f'$ and $f''$
are compactly supported. 
We can easily verify that the linear operator 
$\mathcal{Q}$ is obtained by the limit of 
the {\it Ohta--Kimura model} (cf.~\cite{OK}), that is, 
the discretization of $E=\R \cup \{\Delta\}$ is given by 
\[
E^{(d_n)}=\begin{cases}
    \dis\frac{1}{\sqrt{d_n}}\left\{ 0, \pm 1, 
    \pm 2, \dots, \pm (d_n/2-1), d_n/2\right\} & \text{if $d_n$ is even}\\
    &\\
    \dis\frac{1}{\sqrt{d_n}}\left\{ 0, \pm 1, 
    \pm 2, \dots, \pm (d_n-1)/2\right\} & \text{if $d_n
    $ is odd}
\end{cases}
\]
for $n=1, 2, 3, \dots$, and 
the bounded linear operator $\mathcal{Q}_n$ is defined by
\[
\mathcal{Q}_n f\left(\frac{i}{\sqrt{d_n}}\right)
=\frac{\theta d_n}{2n}\left\{ f\left(\frac{i-1}{\sqrt{d_n}}\right)
+f\left(\frac{i+1}{\sqrt{d_n}}\right)
-2f\left(\frac{i}{\sqrt{d_n}}\right)\right\}
\]
for all $i$ with $i/\sqrt{d_n} \in E^{(d_n)}$,
regarding $f(j/\sqrt{d_n})=0$ unless $j/\sqrt{d_n} \in E^{(d_n)}$. 
In this case, $\mathbf{q}_n=\{q_{ij}^{(n)}\}_{i,j=1}^{d_n}$
is given by 
\[
q_{ij}^{(n)}=\begin{cases}
    \theta d_n/2n & \text{if }j=i \pm 1 \\
    -\theta d_n/n & \text{if }j=i \\
    0 & \text{otherwise}
\end{cases}, \qquad n \in \N,
\]
which satisfies both {\bf (Q2)} and {\bf (Q3)}.
\end{ex}

\begin{ex}\normalfont\label{Ex:Kimura-Crow}
Another example is the so-called 
{\it infinitely-many-neutral-alleles model with uniform mutation}
discussed in e.g., 
Kimura--Crow \cite{KC} and 
Ethier--Kurtz \cite{EK81, EK}, 
which is given by $E=[0, 1]$ and 
\[
\mathcal{Q}f(z)=\frac{\theta}{2}\int_0^1 
\big(f(y)-f(z)\big) \, \dd y
\]
for some $\theta>0$. 
Note that the adjective ``{\it neutral}''
means the lack of selections. 
Actually, this model is obtained by the limit of 
the discrete neutral-alleles model with 
uniform mutation, that is, 
the discretization of $E=[0, 1]$ is 
$E^{(d_n)}=\{i/d_n \mid i=1, 2, \dots, d_n\}$, $n=1, 2, 3, \dots$, and 
the bounded linear operator $\mathcal{Q}_n$ is given by
\[
\mathcal{Q}_nf\left(\frac{i}{d_n}\right)
=\frac{\theta}{2n(d_n-1)}
\sum_{j=1}^{d_n}
\left\{ f\left(\frac{j}{d_n}\right)
-f\left(\frac{i}{d_n}\right)\right\}, \qquad 
i=1, 2, \dots, d_n,
\]
with 
\[
q_{ij}^{(n)}=\begin{cases}
    \theta/2n(d_n-1) & \text{if }i \neq j \\
    -\theta/2n & \text{if }i=j
\end{cases}, \qquad n \in \N.
\]
Note that $\mathbf{q}_n=\{q_{ij}^{(n)}\}_{i,j=1}^{d_n}$
obviously satisfies both {\bf (Q2)} and {\bf (Q3)}.
\end{ex}

\subsection{The Voronovskaya-type theorem for $B_{d_n, n}^{(\mathbf{q}_n)}$}

We show the Voronovskaya-type theorem
for the Bernstein operator 
$B_{d_n, n}^{(\mathbf{q}_n)}$ with 
a sequence of dimension $\{d_n\}_{n=1}^\infty$ satisfying {\bf (D)}. 
Consequently, we will see that the infinitesimal generator
$\mathfrak{A}$ of the Fleming--Viot process appears in the limit.

\begin{tm}\label{Thm:Voronovskaya-infinite}
We assume {\bf (D)}, {\bf (Q2)} and {\bf (Q3)}. 
Then, we have 
    \[
    \lim_{n \to \infty}\|n(B_{d_n, n}^{(\mathbf{q}_n)}-I)P_{d_n}\varphi
     - P_{d_n}\mathfrak{A}\varphi \|_\infty=0, \qquad 
     \varphi \in \mathcal{D}.
    \]
Moreover, if we assume {\bf (A5)} and {\bf (A6)}, 
then we have 
    \begin{equation}\label{Eq:Voronovskaya-infinite-rate}
    \|n(B_{d_n, n}^{(\mathbf{q}_n)}-I)P_{d_n}\varphi
     - P_{d_n}\mathfrak{A}\varphi \|_\infty 
     \le C(\tau_n(\beta) \vee n^{-\ve}), \qquad n \in \N, \, \varphi \in \mathcal{D},
    \end{equation}
for $\varphi(\mu)=\la \beta, \mu^{\otimes N}\ra \in \mathcal{D}$,
where $C>0$ depends on not $n \in \N$ but $\varphi$. 
\end{tm}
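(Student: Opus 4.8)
The plan is to reduce to a typical element $\varphi(\mu)=\la\beta,\mu^{\otimes N}\ra\in\mathcal{D}$ with $\beta\in\Dom(\mathcal{Q}^{(N)})\cap C(E^N)$, and to read the $1/n$-expansion of $B_{d_n,n}^{(\mathbf{q}_n)}P_{d_n}\varphi$ off a coalescent-type bookkeeping of the underlying multinomial sampling. First I would use the expectation representation $B_{d_n,n}^{(\mathbf{q}_n)}P_{d_n}\varphi(\bx)=\E\big[\varphi(\mu_{G_n^{(\mathbf{q}_n)}(\bx)})\big]$ together with the fact that $G_n^{(\mathbf{q}_n)}(\bx)=\frac1nS_n(\bx^{(\mathbf{q}_n)})$ is the vector of type-frequencies of $n$ i.i.d.\ draws $\xi_1,\dots,\xi_n$ from the probability measure $\mu_{\bx^{(\mathbf{q}_n)}}:=\widehat{\pi}_{d_n}(\bx^{(\mathbf{q}_n)})=\sum_{i=1}^{d_n}x_i^{(\mathbf{q}_n)}\delta_{z_i^{(d_n)}}$, so that $\mu_{G_n^{(\mathbf{q}_n)}(\bx)}=\frac1n\sum_{m=1}^n\delta_{\xi_m}$ is an empirical measure. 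Expanding $\varphi$ of this empirical measure over $N$-tuples of indices in $\{1,\dots,n\}$, grouping the tuples by their coincidence pattern --- a partition $\pi$ of $\{1,\dots,N\}$, with $|\pi|$ blocks --- and taking expectations gives the exact identity
\[
B_{d_n,n}^{(\mathbf{q}_n)}P_{d_n}\varphi(\bx)
=\sum_{\pi}\frac{(n)_{|\pi|}}{n^{N}}\,\la\beta_\pi,\mu_{\bx^{(\mathbf{q}_n)}}^{\otimes|\pi|}\ra,
\qquad (n)_k:=n(n-1)\cdots(n-k+1),
\]
where $\beta_\pi$ is the function of $|\pi|$ variables obtained from $\beta$ by identifying the variables lying in a common block; in particular $\beta_\pi=\beta$ for the partition into singletons, and $\beta_\pi=\Phi_{\ell_1\ell_2}^{(N)}\beta$ when the only non-trivial block of $\pi$ is $\{\ell_1,\ell_2\}$.

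Next I would extract the leading order. Since $(n)_k/n^{N}=O(n^{k-N})$, the singleton partition is of size $O(1)$, the $\binom N2$ single-pair partitions are of size $O(1/n)$, and every partition with at most $N-2$ blocks is $O(1/n^{2})$, hence negligible after multiplication by $n$ (uniformly, because $\|\beta_\pi\|_\infty\le\|\beta\|_\infty$ and the number of partitions depends only on $N$). For the singleton term I would separately expand $\la\beta,\mu_{\bx^{(\mathbf{q}_n)}}^{\otimes N}\ra$ around $\mu_\bx:=\widehat{\pi}_{d_n}(\bx)$ by telescoping one slot at a time, using $\mu_{\bx^{(\mathbf{q}_n)}}-\mu_\bx=\mu_\bx\mathcal{Q}_n$ (viewing $\mathcal{Q}_n=Q_n-I$ as a signed kernel on $E^{(d_n)}$), which yields
\[
\la\beta,\mu_{\bx^{(\mathbf{q}_n)}}^{\otimes N}\ra-\la\beta,\mu_\bx^{\otimes N}\ra
=\frac1n\sum_{r=1}^{N}\la (n\mathcal{Q}_n)^{(r)}\beta,\ \mu_{\bx^{(\mathbf{q}_n)}}^{\otimes(r-1)}\otimes\mu_\bx^{\otimes(N-r+1)}\ra,
\]
where $(n\mathcal{Q}_n)^{(r)}$ applies $n\mathcal{Q}_n$ in the $r$-th variable. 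Combining this with $(n)_N/n^{N}=1-\binom N2/n+O(1/n^{2})$ and the total-variation estimate $\|\mu_{\bx^{(\mathbf{q}_n)}}-\mu_\bx\|_{\mathrm{TV}}\le 2(d_n-1)\max_{i,j}q_{ij}^{(n)}\le Cd_na_n$ (which tends to $0$ by \textbf{(Q2)} and \textbf{(D)}), then letting $n\to\infty$ and invoking \textbf{(Q3)} coordinatewise (using that the one-variable sections of $\beta$ lie in $\Dom(\mathcal{Q})$, as $\beta\in\Dom(\mathcal{Q}^{(N)})$) shows that $n(B_{d_n,n}^{(\mathbf{q}_n)}-I)P_{d_n}\varphi(\bx)$ converges, uniformly in $\bx\in\Delta_{d_n-1}$, to
\[
-\binom N2\la\beta,\mu_\bx^{\otimes N}\ra
+\sum_{1\le\ell_1<\ell_2\le N}\la\Phi_{\ell_1\ell_2}^{(N)}\beta,\mu_\bx^{\otimes(N-1)}\ra
+\la\mathcal{Q}^{(N)}\beta,\mu_\bx^{\otimes N}\ra
=\mathfrak{A}\varphi(\mu_\bx)=P_{d_n}\mathfrak{A}\varphi(\bx),
\]
by the reduced form \eqref{Eq:generator-FV-2} of $\mathfrak{A}$; this proves the first assertion.

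For the quantitative bound \eqref{Eq:Voronovskaya-infinite-rate} under \textbf{(A5)} and \textbf{(A6)} I would carry each of the finitely many error terms above with an explicit bound. The partitions with $\le N-2$ blocks contribute at most $C_N\|\beta\|_\infty/n\le Cn^{-\ve}$; every replacement of a factor $\mu_{\bx^{(\mathbf{q}_n)}}$ by $\mu_\bx$ costs at most $\|\cdot\|_\infty\,\|\mu_{\bx^{(\mathbf{q}_n)}}-\mu_\bx\|_{\mathrm{TV}}\le Cd_na_n\le Cn^{1/8-\ve}\cdot n^{-11/16+\ve}=Cn^{-9/16}\le Cn^{-\ve}$ by \textbf{(A5)}; in the mutation sum each replacement of $(n\mathcal{Q}_n)^{(r)}\beta$ by $\mathcal{Q}^{(r)}\beta$ costs at most $\tau_n(\beta)$ by \textbf{(A6)} (applied to the sections of $\beta$), while $\|n\mathcal{Q}_n\beta\|_\infty\le\|\mathcal{Q}\beta\|_\infty+\tau_1(\beta)$ stays bounded by a constant depending only on $\varphi$; and the Taylor remainder of $(n)_N/n^{N}$ contributes $O(1/n)$. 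Summing these contributions gives $\|n(B_{d_n,n}^{(\mathbf{q}_n)}-I)P_{d_n}\varphi-P_{d_n}\mathfrak{A}\varphi\|_\infty\le C(\tau_n(\beta)\vee n^{-\ve})$ with $C$ depending on $N$, $\|\beta\|_\infty$, $\|\mathcal{Q}\beta\|_\infty$ and $\tau_1(\beta)$ but not on $n$.

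The main obstacle I anticipate is the mutation term: one must make precise that applying $n\mathcal{Q}_n$ in a single coordinate of $\beta$ converges to $\mathcal{Q}$ in that coordinate \emph{uniformly with respect to the other, frozen coordinates}, i.e.\ that an element of $\Dom(\mathcal{Q}^{(N)})\cap C(E^N)$ has one-variable sections in $\Dom(\mathcal{Q})$ with a rate controlled by $\tau_n$ independently of the frozen variables, so that \textbf{(Q3)} and \textbf{(A6)} genuinely apply coordinatewise; and, simultaneously, one must keep the $d_n$-dependent errors (the $d_na_n$ coming from the total-variation estimates, together with the $1/n$'s from the falling factorials) dominated by $\tau_n(\beta)\vee n^{-\ve}$ --- which is exactly what the orders prescribed in \textbf{(D)}, \textbf{(Q2)} and \textbf{(A5)} are calibrated to ensure. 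The remaining work is routine total-variation bookkeeping and elementary asymptotics of $(n)_k/n^{N}$.
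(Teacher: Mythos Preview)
Your argument is correct and takes a genuinely different route from the paper. The paper proceeds by a third-order Taylor expansion of $P_{d_n}\varphi$ as a function on $\R^{d_n}$: it computes $\del_i(P_{d_n}\varphi)$, $\del_{ij}(P_{d_n}\varphi)$, $\del_{ijk}(P_{d_n}\varphi)$ explicitly as $d_n$-fold sums (equations \eqref{Eq:first-deriv}--\eqref{Eq:third-deriv}), matches the first two Taylor terms against the mutation and sampling pieces of $\mathfrak{A}$, and bounds the integral remainder $I_n^{(3)}$ using fourth-moment estimates of the multinomial, carefully tracking powers of $d_n$ that accumulate from the $d_n^3$ summation indices. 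You instead exploit the empirical-measure structure: writing $\mu_{G_n^{(\mathbf{q}_n)}(\bx)}=\frac1n\sum_m\delta_{\xi_m}$ and expanding $\la\beta,\cdot^{\otimes N}\ra$ over coincidence patterns yields the exact partition identity $\sum_\pi (n)_{|\pi|}n^{-N}\la\beta_\pi,\mu_{\bx^{(\mathbf{q}_n)}}^{\otimes|\pi|}\ra$, from which the sampling operators $\Phi^{(N)}_{\ell_1\ell_2}$ emerge as the single-pair partitions and the mutation term comes from telescoping $\mu_{\bx^{(\mathbf{q}_n)}}^{\otimes N}-\mu_\bx^{\otimes N}$ via $\mu_{\bx^{(\mathbf{q}_n)}}-\mu_\bx=\mu_\bx\mathcal{Q}_n$. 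Your approach is dimension-free (the expansion is over partitions of $\{1,\dots,N\}$, not over $d_n$ indices), is closer to the standard coalescent derivation of the Fleming--Viot generator, and makes the error terms more transparent: the only places $d_n$ enters are through the total-variation bound $\|\mu_{\bx^{(\mathbf{q}_n)}}-\mu_\bx\|_{\mathrm{TV}}\le 2(d_n-1)\max_{i\neq j}q_{ij}^{(n)}$, which is exactly what \textbf{(D)}/\textbf{(Q2)}/\textbf{(A5)} are designed to control. The paper's approach, by contrast, is closer to the finite-dimensional Voronovskaya proofs of Sections~\ref{Sect:Proofs-1}--\ref{Sect:Proofs-2} and requires no measure-theoretic bookkeeping. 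The ``main obstacle'' you flag---that \textbf{(Q3)}/\textbf{(A6)} must apply uniformly to the one-variable sections of $\beta\in\Dom(\mathcal{Q}^{(N)})$---is genuine, but the paper faces and glosses over exactly the same point when it passes from $\la n\mathcal{Q}_n^{(N)}\beta,\mu_\bx^{\otimes N}\ra$ to $\la\mathcal{Q}^{(N)}\beta,\mu_\bx^{\otimes N}\ra$ invoking \textbf{(Q3)}; it is not a defect peculiar to your method.
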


As for Assumotions {\bf (A5)} and {\bf (A6)}, 
see the statement of Theorem \ref{Thm:Bernstein-infinity-1}. 

\begin{proof}
    Let $\varphi(\mu)=\la \beta, \mu^{\otimes N}\ra \in \mathcal{D}$, where 
    $N \in \N$ and $\beta \in \Dom(\mathcal{Q}^{(N)}) \cap C(E^N)$. 
    Then, it follows from \eqref{Eq:generator-FV-2} that
    \[
    \begin{aligned}
    P_{d_n}\mathfrak{A}\varphi(\bx)
    &= \sum_{1 \le \ell_1 < \ell_2 \le N}\Big(
    \la \Phi_{\ell_1\ell_2}^{(N)}\beta, (\mu_{\bx}^{(d_n)})^{\otimes(N-1)}\ra - \la \beta, (\mu_{\bx}^{(d_n)})^{\otimes N}\ra \Big)+\la \mathcal{Q}^{(N)}\beta, (\mu_{\bx}^{(d_n)})^{\otimes N}\ra \\
    \end{aligned}
    \]
    for $\bx=(x_1, x_2, \dots, x_{d_n}) \in \Delta_{d_n-1}$, 
    where we set
    \[
    \mu_{\bx}^{(d_n)}=\sum_{i=1}^{d_n}x_i\delta_{z_i^{(d_n)}}
    \in \mathcal{P}(E^{(d_n)}) \subset \mathcal{P}(E). 
    \]
    Moreover, we can easily see that 
    \begin{align}
    \del_i(P_{d_n}\varphi)(\bx)
    &=\sum_{\ell_1=1}^N \left(
    \sum_{i_1, \dots, i_N=1}^{d_n} 
    \prod_{m \neq \ell_1}x_{i_m}
    \delta_{ii_{\ell_1}}\beta(z_{i_1}^{(d_n)},  \dots, z_{i_{N}}^{(d_n)})\right), 
    \label{Eq:first-deriv}\\
    \del_{ij}(P_{d_n}\varphi)(\bx)
    &=\sum_{\ell_1, \ell_2=1}^N \left(
    \sum_{i_1, \dots, i_N=1}^{d_n}
    \prod_{m \neq \ell_1, \ell_2}x_{i_m} 
    \delta_{ii_{\ell_1}}\delta_{ji_{\ell_2}}
    \beta(z_{i_1}^{(d_n)},  \dots, z_{i_{N}}^{(d_n)})\right), 
    \label{Eq:second-deriv}\\
    \del_{ijk}(P_{d_n}\varphi)(\bx)
    &=\sum_{\ell_1, \ell_2, \ell_3=1}^N \left(
    \sum_{i_1, \dots, i_N=1}^{d_n}
    \prod_{m \neq \ell_1, \ell_2, \ell_3}x_{i_m} 
    \delta_{ii_{\ell_1}}\delta_{ji_{\ell_2}}\delta_{ki_{\ell_3}}
    \beta(z_{i_1}^{(d_n)},  \dots, z_{i_{N}}^{(d_n)})\right).
    \label{Eq:third-deriv}
    \end{align}
    Here, we apply Taylor's formula to get
    \begin{align}
     & n(B_{d_n, n}^{(\mathbf{q}_n)}-I)P_{d_n}\varphi(\bx)  \nn \\
     &=n\sum_{i=1}^{d_n} 
     \del_{i}(P_{d_n}\varphi)(\bx)
     \mathbb{E}\big[(G_n^{(\mathbf{q}_n)})^i(\bx) - x_i\big]
     \nn \\
     &\hspace{1cm}+\frac{n}{2}\sum_{i, j=1}^{d_n}
     \del_{ij}(P_{d_n}\varphi)(\bx) 
     \mathbb{E}\big[((G_n^{(\mathbf{q}_n)})^i(\bx) - x_i)
     ((G_n^{(\mathbf{q}_n)})^j(\bx) - x_j)\big] \nn \\
     &\hspace{1cm}+\frac{n}{2}\sum_{i, j, \ell=1}^{d_n}
     \int_0^1 (1-t)^2
     \del_{ij\ell}(P_{d_n}\varphi)(\bx+t(G_n^{(\mathbf{q}_n)}(\bx)-\bx)) \, \dd t  \nn \\
     &\hspace{1.5cm}
     \times 
     \mathbb{E}\big[((G_n^{(\mathbf{q}_n)})^i(\bx) - x_i)
     ((G_n^{(\mathbf{q}_n)})^j(\bx) - x_j)
     ((G_n^{(\mathbf{q}_n)})^k(\bx) - x_k)\big]\nn\\
     &=: I_n^{(1)}(\bx)+I_n^{(2)}(\bx)
     +I_n^{(3)}(\bx), \qquad 
     \bx \in \Delta_{d_n-1}.
     \label{Eq:Taylor-99}
    \end{align}
We first consider the terms $I_n^{(1)}(\bx)$ and $I_n^{(2)}(\bx)$. 
In view of \eqref{Eq:first-deriv} and \eqref{Bernstein-1st-moment-general}, 
the term $I_n^{(1)}(\bx)$ is going to be
\begin{align}
    I_n^{(1)}(\bx) &=
    n\sum_{i=1}^{d_n} 
    \sum_{\ell_1=1}^N \left(
    \sum_{i_1, \dots, i_N=1}^{d_n} 
    \prod_{m \neq \ell_1}x_{i_m}
    \delta_{ii_{\ell_1}}\beta(z_{i_1}^{(d_n)}, z_{i_2}^{(d_n)}, \dots, z_{i_{N}}^{(d_n)})\right)
    h_n^i(\bx) \nn \\
    &=n\sum_{\ell_1=1}^N
    \sum_{\substack{1 \le i_m \le d_n \\ m \neq \ell_1}}
    \prod_{m \neq \ell_1}x_{i_m}
    \sum_{i=1}^{d_n}
    \beta(z_{i_1}^{(d_n)}, \dots, \overbrace{z_i^{(d_n)}}^{\text{$i_{\ell_1}$-th}}, \dots, z_{i_N}^{(d_n)})\left(\sum_{j=1}^{d_n}q_{ji}^{(n)}x_j\right) \nn \\
    &= n\sum_{\ell_1=1}^N
    \sum_{\substack{1 \le i_m \le d_n \\ m \neq \ell_1}}
    \prod_{m \neq \ell_1}x_{i_m}
    \sum_{i=1}^{d_n}x_i
    \left(\sum_{j=1}^{d_n}q_{ij}^{(n)}
    \beta(z_{i_1}^{(d_n)}, \dots, \overbrace{z_i^{(d_n)}}^{\text{$i_{\ell_1}$-th}}, \dots, z_{i_N}^{(d_n)})\right) \nn \\
    &=\la n\mathcal{Q}_n^{(N)}\beta, (\mu_{\bx}^{(d_n)})^{\otimes N}\ra, \qquad \bx \in \Delta_{d_n-1}.
    \label{Eq:infty-I_1}
\end{align}
Furthermore, it follows from \eqref{Eq:second-deriv}
and \eqref{Bernstein-2nd-moment-general} that 
\begin{align}
    I_n^{(2)}(\bx) &=
    \frac{1}{2}\sum_{i, j=1}^{d_n} 
    \sum_{\ell_1, \ell_2=1}^N \left(
    \sum_{i_1, \dots, i_N=1}^{d_n}
    \prod_{m \neq \ell_1, \ell_2}x_{i_m} 
    \delta_{ii_{\ell_1}}\delta_{ji_{\ell_2}}
    \beta(z_{i_1}^{(d_n)}, z_{i_2}^{(d_n)}, \dots, z_{i_{N}}^{(d_n)})\right)
     \nn \\
    &\hspace{1cm}\times \left(
    x_i(\delta_{ij} - x_j) 
        +h_n^i(\bx)(\delta_{ij}-x_j)
        - x_ih_n^j(\bx)
        +(n+1)
        h_n^i(\bx) h_n^j(\bx)\right) \nn \\
    &=\frac{1}{2}\sum_{\ell_1, \ell_2=1}^N
    \sum_{\substack{1 \le i_m \le d_n \\ m \neq \ell_1, \ell_2}}
    \prod_{m \neq \ell_1, \ell_2}x_{i_m} 
    \sum_{i, j=1}^{d_n} x_i(\delta_{ij}-x_j)
    \beta(z_{i_1}^{(d_n)}, \dots, 
    \overbrace{z_i^{(d_n)}}^{\text{$i_{\ell_1}$-th}}, \dots, 
    \overbrace{z_j^{(d_n)}}^{\text{$i_{\ell_2}$-th}}, \dots, z_{i_{N}}^{(d_n)})
     \nn \\
    &\hspace{1cm}+\mathcal{G}_n(\beta; \bx) \nn \\
    &= \sum_{1 \le \ell_1<\ell_2 \le N}\Bigg\{
    \sum_{\substack{1 \le i_m \le d_n \\ m \neq \ell_1, \ell_2}}
    \prod_{m \neq \ell_1, \ell_2}x_{i_m} 
    \sum_{i=1}^{d_n} x_i
    \beta(z_{i_1}^{(d_n)}, \dots, 
    \overbrace{z_i^{(d_n)}}^{\text{$i_{\ell_1}$-th}}, \dots, 
    \overbrace{z_i^{(d_n)}}^{\text{$i_{\ell_2}$-th}}, \dots, z_{i_{N}}^{(d_n)})
     \nn \\
    &\hspace{1cm}-\sum_{\substack{1 \le i_m \le d_n \\ m \neq \ell_1, \ell_2}}
    \prod_{m \neq \ell_1, \ell_2}x_{i_m} 
    \sum_{i, j=1}^{d_n} x_ix_j
    \beta(z_{i_1}^{(d_n)}, \dots, 
    \overbrace{z_i^{(d_n)}}^{\text{$i_{\ell_1}$-th}}, \dots, 
    \overbrace{z_j^{(d_n)}}^{\text{$i_{\ell_2}$-th}}, \dots, z_{i_{N}}^{(d_n)})\Bigg\} \nn \\
    &\hspace{1cm}+\mathcal{G}_n(\beta; \bx) \nn \\
    &=\sum_{1 \le \ell_1 < \ell_2 \le N}\Big(
    \la \Phi_{\ell_1\ell_2}^{(N)}\beta, (\mu_{\bx}^{(d_n)})^{\otimes(N-1)}\ra - \la \beta, (\mu_{\bx}^{(d_n)})^{\otimes N}\ra \Big)
    +\mathcal{G}_n(\beta; \bx)
    \label{Eq:infty-I_2}
\end{align}
for each $\bx \in \Delta_{d_n-1}$, where we put 
\[
\begin{aligned}
\mathcal{G}_n(\beta; \bx)
&=\frac{1}{2}\sum_{\ell_1, \ell_2=1}^N
    \sum_{\substack{1 \le i_m \le d_n \\ m \neq \ell_1, \ell_2}}
    \prod_{m \neq \ell_1, \ell_2}x_{i_m} 
    \sum_{i, j=1}^{d_n} 
    \beta(z_{i_1}^{(d_n)}, \dots, 
    \overbrace{z_i^{(d_n)}}^{\text{$i_{\ell_1}$-th}}, \dots, 
    \overbrace{z_j^{(d_n)}}^{\text{$i_{\ell_2}$-th}}, \dots, z_{i_{N}}^{(d_n)})
     \nn \\
    &\hspace{1cm}\times \left(
        h_n^i(\bx)(\delta_{ij}-x_j)
        - x_ih_n^j(\bx)
        +(n+1)
        h_n^i(\bx) h_n^j(\bx)\right),
        \qquad \bx \in \Delta_{d_n-1}. 
\end{aligned}
\]
Here, the norm of $\mathcal{G}_n(\beta; \bx)$ is estimated as 
\begin{align}
    \|\mathcal{G}_n(\beta; \cdot)\|_\infty
    &\le \frac{1}{2}\|\beta\|_\infty N^2d_n^3
    (2Ca_n+Ca_n+Cna_n^2) \nn \\
    &\le C(d_n^3a_n+nd_n^3a_n^2) \nn. 
\end{align}
Thanks to {\bf (D)} and {\bf (Q2)}, 
we see that 
$d_n^3a_n=o(n^{-5/16})$ and $nd_n^3a_n^2=o(1)$ as $n \to \infty$,
which leads to 
\begin{equation}\label{Eq:g_n to 0}
    \lim_{n \to \infty}\|\mathcal{G}_n(\beta; \cdot)\|_\infty=0. 
\end{equation}
On the other hand, 
the term $I_n^{(3)}(\bx)$ is estimated as 
\begin{align}
    &|I_n^{(3)}(\bx)| \nn \\
    &\le
     \frac{n}{2}\sum_{i, j, \ell=1}^{d_n}
     \int_0^1 (1-t)^2
     |\del_{ij\ell}(P_{d_n}\varphi)(\bx+t(G_n^{(\mathbf{q}_n)}(\bx)-\bx))| \, \dd t  \nn \\
     &\hspace{1.5cm}
     \times 
     \mathbb{E}\big[|(G_n^{(\mathbf{q}_n)})^i(\bx) - x_i||
     (G_n^{(\mathbf{q}_n)})^j(\bx) - x_j||
     (G_n^{(\mathbf{q}_n)})^k(\bx) - x_k|\big]\nn\\
     & \le \frac{n}{2}\|\beta\|_\infty\sum_{i, j, \ell=1}^{d_n}
     \int_0^1 (1-t)^2 \sum_{\ell_1, \ell_2, \ell_3=1}^N \left(
    \sum_{\substack{1 \le i_m \le d_n \\ m \neq \ell_1, \ell_2, \ell_3}}
    \prod_{m \neq \ell_1, \ell_2, \ell_3}(x_{i_m}+t((G_n^{(\mathbf{q}_n)})^{i_m}-x_{i_m}) \right) \, \dd t \nn\\
    &\hspace{1cm}\times 
    \E\Big[((G_n^{(\mathbf{q}_n)})^i(\bx) - x_i)^2
     ((G_n^{(\mathbf{q}_n)})^j(\bx) - x_j)^2\Big]^{1/2}
    \E\Big[((G_n^{(\mathbf{q}_n)})^k(\bx) - x_k)^2\Big]^{1/2} \nn \\
    &\le \frac{n}{6}\|\beta\|_\infty N^3 d_n\sum_{i, j, \ell=1}^{d_n}
    \E\Big[((G_n^{(\mathbf{q}_n)})^i(\bx) - x_i)^4\Big]^{1/4}
    \E\Big[((G_n^{(\mathbf{q}_n)})^j(\bx) - x_j)^4\Big]^{1/4} \nn \\
    &\hspace{1cm} \times \E\Big[((G_n^{(\mathbf{q}_n)})^k(\bx) - x_k)^2\Big]^{1/2},
    \qquad \bx \in \Delta_{d_n-1},
    \label{Eq:Taylor-99-3}
\end{align}
where we applied \eqref{Eq:third-deriv} 
and the Schwarz inequality. 
Since it holds that 
\[
\begin{aligned}
&\E\Big[((G_n^{(\mathbf{q}_n)})^i(\bx) - x_i)^{2\beta}\Big]\\
&\le 2^{2\beta-1}\left\{
\E\Big[((G_n^{(\mathbf{q}_n)})^i(\bx) - x_i^{(\mathbf{q}_n)})^{2\beta}\Big]+(x_i^{(\mathbf{q}_n)}-x_i)^{2\beta}\right\} \\
&\le 2^{2\beta-1}\left\{\frac{1}{n^{2\beta}} \times Cn^{\beta}+\left(\frac{C}{n}\right)^{2\beta}\right\}
\le \frac{C}{n^{\beta}}, \qquad \beta \in \N, \, \bx \in \Delta_{d_n-1},
\end{aligned}
\]
due to Theorem \ref{Thm:binomial-moment}, \eqref{Eq:Taylor-99-3} is going to be
\begin{equation}\label{Eq:I_n^3 to 0}
|I_n^{(3)}(\bx)| \le \frac{n}{6}\|\beta\|_\infty N^3 d_n^4 \times 
\left(\frac{C}{n^2}\right)^{1/2} \times \left(\frac{C}{n}\right)^{1/2}
\le C \left(\frac{d_n}{n^{1/8}}\right)^4
\to 0
\end{equation}
as $n \to \infty$ for every $\bx \in \Delta_{d_n-1}$, thanks to $d_n=o(n^{1/8})$. 
Therefore, \eqref{Eq:Taylor-99} together with {\bf (Q3)}, \eqref{Eq:infty-I_1},
\eqref{Eq:infty-I_2}, 
\eqref{Eq:g_n to 0} and \eqref{Eq:I_n^3 to 0} implies that
\begin{align}\label{Eq:convergence-final}
    &\|n(B_{d_n, n}^{(\mathbf{q}_n)}-I)P_{d_n}\varphi
     - P_{d_n}\mathfrak{A}\varphi \|_\infty \nn \\
     &\le \|n\mathcal{Q}_n^{(N)}\beta - \mathcal{Q}^{(N)}\beta\|_\infty
     +\|\mathcal{G}_n(\beta; \cdot)\|_\infty + \|I_n^{(3)}(\cdot)\|_\infty \to 0
     \nn
\end{align}
as $n \to \infty$, which is the very desired convergence. 

Furthermore, under Assumptions {\bf (A5)} and {\bf (A6)}, 
we have 
\[
\begin{aligned}
    |I_n^{(1)}(\bx)| \le C\tau_n(\beta), \qquad 
    |I_n^{(2)}(\bx)| \le Cn^{-5/16-2\ve}, \qquad 
    |I_n^{(3)}(\bx)| \le Cn^{-\ve}
\end{aligned}
\]
for $n \in \N$ and $\bx \in \Delta_{d_n-1}$, 
which readily implies \eqref{Eq:Voronovskaya-infinite-rate}.
\end{proof}

\subsection{Proof of Theorem \ref{Thm:Bernstein-infinity-1}}

We now give the proof of Theorem \ref{Thm:Bernstein-infinity-1}. 

\begin{proof}[Proof of Theorem {\rm \ref{Thm:Bernstein-infinity-1}}]
    Since $\Dom(\mathfrak{A})$ is densely defined and 
    $(\lambda - \mathfrak{A})(\mathcal{D})$ is dense in
    $C(\mathcal{P}(E))$ by noting Lemma \ref{Lem:core-infinity}, Trotter's approximation theorem (cf.~Proposition \ref{Prop:Trotter}) implies that
    \begin{equation}
            \lim_{n \to \infty}
            \left\|(B_{d_n,n}^{(\mathbf{q}_n)})^{\lfloor nt \rfloor}P_{d_n}\varphi 
            - P_{d_n}{\sf T}_t^{(\infty)}\varphi\right\|_\infty=0
            , \qquad \varphi \in C(\mathcal{P}(E)), \, t \ge 0. \nn
            \label{Eq:Bernstein-convergence-infinity-2}
    \end{equation}
    Now let $(\widetilde{\mathsf{T}}_t^{(\infty)})_{t \ge 0}$
    be the $\mathfrak{A}$-diffusion semigroup on $C(\mathcal{P}(E))$ defined by 
    $$
    \widetilde{\mathsf{T}}_t^{(\infty)}\varphi(\mu)
    =\E\big[\varphi\big({\sf X}_t^{(\infty)}(\mu)\big)\big], 
        \qquad \varphi \in C(\mathcal{P}(E)), \, 
        \mu \in \mathcal{P}(E), \, t \ge 0.
    $$
    The rest is to show that the diffusion semigroup $(\widetilde{\mathsf{T}}_t^{(\infty)})_{t \ge 0}$ on $C(\mathcal{P}(E))$
    coincides with the Feller semigroup $(\mathsf{T}_t^{(\infty)})_{t \ge 0}$ on $C(\mathcal{P}(E))$. 
    The following argument is based on a technique 
    demonstrated in \cite[Section 3]{EG}.

    For $N \in \N$ and $\beta_1, \beta_2, \dots, \beta_N \in C(E)$,  define a function 
    $\varphi_{\beta_1, \beta_2, \dots, \beta_N} \in \mathcal{D} \subset C(\mathcal{P}(E))$ by  
    \[
    \varphi_{\beta_1, \beta_2, \dots, \beta_N}(\mu)=
    \la \beta_1, \mu\ra \la \beta_2, \mu \ra \cdots 
    \la \beta_N, \mu\ra. 
    \]
   Then, we easily have 
    \begin{align}
        \mathsf{T}_t^{(\infty)}\varphi_{\beta_1, \beta_2, \dots, \beta_N}(\mu)
        &= \varphi_{\beta_1, \beta_2, \dots, \beta_N}(\mu)
        + \int_0^t \mathsf{T}_s^{(\infty)}\mathfrak{A}
        \varphi_{\beta_1, \beta_2, \dots, \beta_N}(\mu) \, \dd s, 
        \label{Eq:semigroup-comparison-1}\\
        \widetilde{\mathsf{T}}_t^{(\infty)}\varphi_{\beta_1, \beta_2, \dots, \beta_N}(\mu)
        &= \varphi_{\beta_1, \beta_2, \dots, \beta_N}(\mu)
        + \int_0^t \widetilde{\mathsf{T}}_s^{(\infty)}\mathfrak{A}
        \varphi_{\beta_1, \beta_2, \dots, \beta_N}(\mu) \, \dd s
        \label{Eq:semigroup-comparison-2}
    \end{align}
for $t \ge 0$ and $\mu \in \mathcal{P}(E)$. 
We now define a function $K_N : [0, \infty) \to [0, \infty)$ by 
\[
K_N(t):=\sup_{\substack{\beta_1, \beta_2, \dots, \beta_N \in C(E) \\ 
\|\beta_1\|_\infty \vee \cdots \vee  \|\beta_N\|_\infty \le 1}}\sup_{\mu \in \mathcal{P}(E)}
|\mathsf{T}_t^{(\infty)}\varphi_{\beta_1, \beta_2, \dots, \beta_N}(\mu)
-\widetilde{\mathsf{T}}_t^{(\infty)}\varphi_{\beta_1, \beta_2, \dots, \beta_N}(\mu)|.
\]
By combining the identity 
\[
\begin{aligned}
    \mathfrak{A}\varphi_{\beta_1, \beta_2, \dots, \beta_N}(\mu)
    &= \sum_{1 \le i < j \le N}\Big(
    \la \beta_i\beta_j, \mu\ra - \la \beta_i, \mu \ra \la \beta_j, \mu\ra \Big) 
    \prod_{\ell \neq i, j}\la \beta_\ell, \mu\ra \\ 
    &\hspace{1cm}+\sum_{i=1}^N \la \mathcal{Q}\beta_i, \mu\ra 
    \prod_{\ell\neq i}\la \beta_\ell, \mu\ra, \qquad \mu \in \mathcal{P}(E),
\end{aligned}
\]
with \eqref{Eq:semigroup-comparison-1} and \eqref{Eq:semigroup-comparison-2}, one has
\[
K_N(t) \le \int_0^t \left(2 \times \frac{N(N-1)}{2}K_N(s)+NK_N(s)\right) \, \dd s=N^2\int_0^t K_N(s) \, \dd s, \qquad t \ge 0.
\]
Thus, the Gronwall inequality immediately leads to $K_N(t) \equiv 0$, which 
concludes that $\mathsf{T}_t^{(\infty)}=\widetilde{\mathsf{T}}_t^{(\infty)}$ for $t \ge 0$ and \eqref{Eq:Bernstein-convergence-infinity} has been proved. 

Finally, we assume {\bf (A5)} and {\bf (A6)}. For $n \in \N$ and 
$\ve \in (0, 1/8)$,  
we have 
    \[
    \begin{aligned}
    \|n(B_{d_n, n}^{(\mathbf{q}_n)}-I)P_{d_n}\varphi\|_{\infty} 
    &\le C(\tau_n(\beta) \vee n^{-\ve})+\|\mathfrak{A}\varphi\|_\infty, \\
    \|n(B_{d_n, n}^{(\mathbf{q}_n)}-I)P_{d_n}\varphi
     - P_{d_n}\mathfrak{A}\varphi \|_\infty 
    &\le C(\tau_n(\beta) \vee n^{-\ve}) \to 0 \quad \text{as $n \to \infty$}. 
    \end{aligned}
    \]
    Therefore, Proposition 
    \ref{Prop:Trotter-rate} yields the
    rate of convergence \eqref{Eq:Bernstein-convergence-infinity-rate}.
\end{proof}

\section{{\bf Conclusions and further comments}}
\label{Sect:Conclusion}


In the present paper, we have discussed limit theorems 
for iterates of the multidimensional Bernstein operator
$B_{d, n}^{(\mathbf{q}_n)}$ associated with a certain 
set of real numbers $\mathbf{q}_n$ and have obtained 
some explicit relations between the limiting objectives 
and the multidimensional 
diffusion processes arising in the study of population genetics. 
Furthermore, we also capture a measure-valued diffusion process
in population genetics as the limit of the iterate of 
the multidimensional Bernstein operator associated with $\mathbf{q}_n$
as not only the number of iterate but also the dimension 
tend to infinity. 
Some of our results in the present paper seem to be whole new. 
Hence, we believe that our results does make significant contributions to 
the development of the study of many areas of mathematics such as 
probability theory, approximation theory and functional analysis
as well as population genetics. 
On the other hand, we also find out several possible directions 
to do next, all of which are to be worth further investigations.

In the present paper, we do not discuss the 
$d$-dimensional Wright--Fisher diffusion with both mutation and {\it selection}. 
Such a diffusion has an infinitesimal generator of the form
\[
\begin{aligned}
\mathcal{A}_df(\bx)
&=\frac{1}{2}\sum_{i,j=1}^d x_i(\delta_{ij}-x_j)
\frac{\del^2 f}{\del x_i \del x_j} \\
&\hspace{1cm}
+\sum_{i=1}^d \left(
\sum_{j=1}^d q_{ji}x_j + x_i
\left(\sum_{j=1}^d s_{ij}x_j - 
\sum_{j, k=1}^d s_{jk}x_jx_k\right)\right)\frac{\del f}{\del x_i}
\end{aligned}
\]
acting on $C^2(\Delta_{d-1})$, where 
$s_{ij}=s_{ji} \in \R$, $i, j=1, 2, \dots, d$, 
is the fitness parameter of selections.
Since the drift coefficient is no longer linear in each $x_i$, 
it might be slightly difficult to capture the diffusion 
in terms of the limit of the iterate of some 
extended Bernstein operator, while it is definitely an ideal 
goal. 
If done, the result would make an extensive 
contribution to the study of population genetics 
and related mathematics.

Another positive linear operator of interest is
the {\it Sz\'asz--Mirakyan operator} (cf.~\cite{Szasz, Mirakyan}), which is a kind of extension of the 
Bernstein operator. 
For $n \in \N$, define 
\[
M_nf(x):=\sum_{k=0}^\infty e^{-nx}\frac{(nx)^k}{k!}f\left(\frac{k}{n}\right),
\qquad f \in C([0, \infty)), \, x \in [0, \infty). 
\]
In particular, Sz\'asz proved that the positive linear operator $M_n$
uniformly approximates every continuous function in $C_\infty([0, \infty))$, 
the Banach space of all continuous functions $f : [0, \infty) \to \R$
vanishing at infinity. Motivated by the convergence result of \cite{KYZ18}, 
one of the author of the present paper have discussed in \cite{ANS} 
limit theorems for the iterate of $M_n$. 
By noting that $M_nf$ can be expressed as the expectation with respect to 
the Poisson distribution, 
it has been shown that the iterate $M_n^{\lfloor nt \rfloor}$
uniformly converges to the one-dimensional diffusion semigroup on a 
weighted function space which
corresponds to the stochastic differential equation 
\[
\dd \mathsf{Y}_t(x)=\sqrt{\mathsf{Y}_t(x)} \, \dd W_t, 
\qquad \mathsf{Y}_0(x)=x \in [0, \infty),
\]
where $(W_t)_{t \ge 0}$ is a one-dimensional 
standard Brownian motion (cf.~\cite[Theorem 1.5]{ANS}). 
Then, we wonder if its multidimensional extension can also be established or not, 
which should be an interesting problem.
However, there are some technical difficulties 
in establishing some limit theorems for the iterates of a
$d$-dimensional extension of the Sz\'asz--Mirakyan operator 
on $C([0, \infty)^d)$, 
even though it can be properly defined. 
One of them is how we should set
the boundary conditions for the 
infinitesimal generator corresponding to the limiting diffusion, since the diffusion coefficient is degenerate 
in the whole boundary. 
An effective way to overcome the difficulty 
may be to create a positive drift in the limiting diffusion 
so as to avoid hitting the boundary. 
Hence, we need to extend the definition of the 
multidimensional Sz\'asz--Mirakyan operator 
similarly to Definition \ref{Def:modified Bernstein}
in the Bernstein case.
Furthermore, its infinite-dimensional extension 
would also be a quite intriguing problem to do. 
In particular, we are interested in what kinds of 
measure-valued diffusion processes can be captured 
in the Sz\'asz--Mirakyan case.

On the other hand, yet another extension of 
the Bernstein operator might be realized by replacing 
the binomial distribution in the definition by 
a one-dimensional probability distribution which differs from both binomial and Poisson ones. 
Some of such examples have already discussed in 
e.g. \cite{KR67, Tate, Altomare}, mainly in the context of 
approximation theory. 
In order to try to generalize this idea including these 
existing examples, we may introduce
the {\it generalized Bernstein operator}
\[
\mathfrak{B}_nf(x):=
\int_E f\left(\frac{t}{n}\right) \, \mu_x^{*n}(\dd t), 
\qquad x \in E,
\]
acting on 
some Banach space consisting of continuous functions 
on a locally compact metric space $E \subset \R$, 
where $\mu_x^{*n}$ stands for the $n$-fold convolution of
a certain probability measure supported on $E$
satisfying 
$
\int_E t \, \mu_x(\dd t)=x.
$
Under this generalization, we expect to
obtain various kinds of limit theorems and 
diffusion processes taking values in $E$, 
which should also be fascinating in a probabilistic point of view.
However, there still be a lot of essential difficulties to 
establish the general theory.
So we postpone these problems in the forthcoming paper.

\section*{{\bf Acknowledgement}}
The authors would like to thank
Professor Hiroshi Kawabi for offering extremely helpful suggestions to infinite-dimensional generalizations,
which makes the quality of the present paper much better. 
We also thank Professor Jir\^o Akahori, who was the advisor of the first-named author,  
for valuable comments and constant encouragement.
The views or opinions expressed in the present paper 
do not necessarily reflect those of Mizuho-DL Financial Technology Co., Ltd. 
The second-named author is supported 
by JSPS KAKENHI Grant Number 19K23410 and 23K12986.

\section*{{\bf Declaration}}

\noindent
{\bf Conflict of interest}\quad The authors declare that they have no conflict of interest.

\begin{appendix}
\label{app}

\color{black}
\section{{\bf Trotter's approximation theorem and 
its rate of convergence}}
\label{App:Trotter}

{\it Trotter's approximation theorem} gives a sufficient condition for 
the iterate of a bounded linear operator 
acting on a Banach space to converge to a $ C_0 $-semigroup. 
We give the statement of Trotter's approximation theorem 
when the linear operator enjoys the contraction property. 
Let $(\mathcal{U}_n, \|\cdot\|_{\mathcal{U}_n}), \, n \in \mathbb{N}$, 
and $(\mathcal{U}, \|\cdot\|_{\mathcal{U}})$ be 
 Banach spaces. We denote by $P_n : \mathcal{U} \to \mathcal{U}_n, \, n \in \mathbb{N},$ 
 a bounded linear operator
 with $\|P_n\| \le 1$ for $n \in \mathbb{N}$. 
We say that the sequence of pairs $\{(\mathcal{U}_n, P_n)\}_{n=1}^\infty$ approximates 
the Banach space $\mathcal{U}$ if  
$\|P_n f\|_{\mathcal{U}_n} \to \|f\|_{\mathcal{U}}$ as $n \to \infty$
 for $f \in \mathcal{U}$, 
which means that each $P_n, \, n \in \mathbb{N},$ 
is regarded as an isomorphism between $\mathcal{U}_n$ and $\mathcal{U}$ in passing 
to the limit. 
Let $f_n \in \mathcal{U}_n$ and $f \in \mathcal{U}$. 
We also say that 
$f=\lim_{n \to \infty}f_n$
if 
$\|f_n-P_nf\|_{\mathcal{U}_n} \to 0$ as $n \to \infty$.  
Then, we define 
the limit $\mathcal{A}$ of a sequence of 
linear operators $\mathcal{A}_n$ by putting 
$$
\begin{aligned}
\mathcal{A}f &:= \lim_{n \to \infty}\mathcal{A}_nP_n f, \qquad f \in \mathcal{U}, \\
\mathrm{Dom}(\mathcal{A})&:= \left\{f \in \mathcal{U} \, \Big| \, 
\lim_{n \to \infty}\mathcal{A}_nP_n f \text{ exists}\right\}. 
\end{aligned}
$$
Then, we have the following.

\begin{pr}[cf.~{\cite[Theorem 5.1]{Trotter}} and {\cite[Theorem 2.13]{Kurtz}}]
\label{Prop:Trotter}
For $n \in \N$, let $ T_n $ be a bounded linear operator on $ \mathcal{U}_n $ satisfying 
$ \|T_n\| \le 1 $. 
Put $ \mathcal{A}_n:=n(T_n-I), \, n \in \N. $
We define a linear operator $ \mathcal{A} $ 
by the closure of the limit of $ \mathcal{A}_n $.  
If the domain $ \Dom(\mathcal{A}) $ is dense in $ \mathcal{U} $ and 
the range of $ \lambda - \mathcal{A} $ is dense 
in $ \mathcal{U} $ for some $ \lambda>0 $, 
then  there exists a 
$ C_0 $-semigroup $ (\mathsf{T}_t)_{t \ge 0} $
acting on $ \mathcal{U} $ 
satisfying $\|\mathsf{T}_t\| \le 1, \, t \ge 0$, and 
    \begin{equation}\label{limit-Trotter}
    \lim_{n \to \infty}
    \|T_n^{\lfloor nt \rfloor}P_nf - P_n\mathsf{T}_tf\|_{\mathcal{U}_n}=0,
    \qquad t \ge 0, \, f \in \mathcal{U}. 
    \end{equation}
\end{pr}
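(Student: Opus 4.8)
The plan is to prove Proposition~\ref{Prop:Trotter} in three stages: construct the limiting semigroup $(\mathsf{T}_t)_{t\ge0}$ on $\mathcal{U}$ by means of the Lumer--Phillips theorem, establish resolvent convergence, and then pass from the resolvents to the iterates $T_n^{\lfloor nt\rfloor}$.

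First I would observe that, since $\|T_n\|\le1$, the bounded operator $\mathcal{A}_n=n(T_n-I)$ generates a contraction $C_0$-semigroup on $\mathcal{U}_n$ given explicitly by $\e^{t\mathcal{A}_n}=\e^{-nt}\sum_{k=0}^{\infty}\frac{(nt)^k}{k!}T_n^k$; equivalently $\e^{t\mathcal{A}_n}g=\E[T_n^{N}g]$, where $N$ is a Poisson random variable with mean $nt$. In particular each $\mathcal{A}_n$ is dissipative, i.e. $\|(\lambda-\mathcal{A}_n)g\|_{\mathcal{U}_n}\ge\lambda\|g\|_{\mathcal{U}_n}$ for all $\lambda>0$. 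Applying this with $g=P_nf$ for $f\in\Dom(\mathcal{A})$ and letting $n\to\infty$, using $\|P_n\,\cdot\,\|_{\mathcal{U}_n}\to\|\cdot\|_{\mathcal{U}}$ and $\mathcal{A}_nP_nf\to\mathcal{A}f$, yields $\|(\lambda-\mathcal{A})f\|_{\mathcal{U}}\ge\lambda\|f\|_{\mathcal{U}}$, hence the same inequality for the closure $\overline{\mathcal{A}}$. Because $\Dom(\mathcal{A})$ is dense and $\Range(\lambda-\mathcal{A})$ is dense for some $\lambda>0$, dissipativity forces $\Range(\lambda-\overline{\mathcal{A}})$ to be closed, hence equal to $\mathcal{U}$; the Lumer--Phillips theorem then produces the desired contraction $C_0$-semigroup $(\mathsf{T}_t)_{t\ge0}$ generated by $\overline{\mathcal{A}}$.

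Next I would prove the resolvent convergence $\|(\lambda-\mathcal{A}_n)^{-1}P_nf-P_n(\lambda-\overline{\mathcal{A}})^{-1}f\|_{\mathcal{U}_n}\to0$ for every $f\in\mathcal{U}$ and $\lambda>0$. For $f$ in the dense set $\Range(\lambda-\mathcal{A})$, write $f=(\lambda-\mathcal{A})g$ with $g\in\Dom(\mathcal{A})$; then $(\lambda-\mathcal{A}_n)P_ng=\lambda P_ng-\mathcal{A}_nP_ng\to f$ in the approximation sense, and applying $(\lambda-\mathcal{A}_n)^{-1}$, whose norm is $\le1/\lambda$, gives $(\lambda-\mathcal{A}_n)^{-1}P_nf\to g=(\lambda-\overline{\mathcal{A}})^{-1}f$; the uniform bound $\|(\lambda-\mathcal{A}_n)^{-1}\|\le1/\lambda$ and density extend this to all of $\mathcal{U}$. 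From here the classical Trotter--Kato argument --- comparing $\e^{t\mathcal{A}_n}$ and $\mathsf{T}_t$ through their Yosida approximations $\lambda^2(\lambda-\mathcal{A}_n)^{-1}-\lambda$ and $\lambda^2(\lambda-\overline{\mathcal{A}})^{-1}-\lambda$, or by integrating $s\mapsto\e^{(t-s)\mathcal{A}_n}P_n\mathsf{T}_sf$ --- yields $\|\e^{t\mathcal{A}_n}P_nf-P_n\mathsf{T}_tf\|_{\mathcal{U}_n}\to0$, uniformly for $t$ in compact sets.

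Finally I would compare $T_n^{\lfloor nt\rfloor}$ with $\e^{t\mathcal{A}_n}$ through the Poisson representation. Using the elementary bound $\|(T_n^a-T_n^b)g\|\le|a-b|\,\|(T_n-I)g\|$, valid since $\|T_n\|\le1$, one obtains for $h\in\Dom(\mathcal{A})$ that
\[
\|\e^{t\mathcal{A}_n}P_nh-T_n^{\lfloor nt\rfloor}P_nh\|_{\mathcal{U}_n}\le\E\big[|N-\lfloor nt\rfloor|\big]\,\|(T_n-I)P_nh\|_{\mathcal{U}_n}\le\big(\sqrt{nt}+1\big)\,\frac{1}{n}\,\|\mathcal{A}_nP_nh\|_{\mathcal{U}_n},
\]
which tends to $0$ since $\mathcal{A}_nP_nh\to\overline{\mathcal{A}}h$ is bounded in $n$. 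Combining this with the semigroup convergence of the previous step proves \eqref{limit-Trotter} for $f=h\in\Dom(\mathcal{A})$, and then for arbitrary $f\in\mathcal{U}$ by approximating $f$ by elements of $\Dom(\mathcal{A})$ and invoking the contraction bounds $\|T_n^{\lfloor nt\rfloor}\|\le1$, $\|\mathsf{T}_t\|\le1$, $\|P_n\|\le1$. I expect the main obstacle to be the Trotter--Kato passage from resolvent convergence to semigroup convergence: the limit operator $\mathcal{A}$ must be replaced by its closure $\overline{\mathcal{A}}$ throughout, and one has to check that the various convergences of $\mathcal{A}_nP_n(\cdot)$ hold on a set rich enough to run the density arguments, which may require a diagonal selection.
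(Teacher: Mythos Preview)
The paper does not prove Proposition~\ref{Prop:Trotter}; it is stated in the appendix as a quoted result with references to \cite{Trotter} and \cite{Kurtz}, and is used as a black box in the proofs of Theorems~\ref{Thm:Bernstein-1}, \ref{Thm:Bernstein-general-1} and \ref{Thm:Bernstein-infinity-1}. There is therefore no ``paper's own proof'' to compare against.

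That said, your sketch is the classical Trotter--Kurtz argument and is essentially correct. A couple of minor remarks. First, the statement already defines $\mathcal{A}$ as the \emph{closure} of the limit of $\mathcal{A}_n$, so your distinction between $\mathcal{A}$ and $\overline{\mathcal{A}}$ is a slight notational mismatch; nothing is lost if you simply write $\mathcal{A}$ throughout. Second, the step you flag as the ``main obstacle'' --- passing from resolvent convergence to semigroup convergence in the approximation framework $\{(\mathcal{U}_n,P_n)\}$ --- is indeed the part you have left most implicit; the Yosida-approximation route you indicate works, but one has to verify that the standard estimates (e.g.\ $\|\e^{t\mathcal{A}_n}P_nf-\e^{t\mathcal{A}_{n,\lambda}}P_nf\|_{\mathcal{U}_n}\to0$ uniformly in $n$ as $\lambda\to\infty$ for $f\in\Dom(\mathcal{A})$, where $\mathcal{A}_{n,\lambda}$ is the Yosida approximation) go through with the $P_n$'s in place. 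This is precisely what is carried out in \cite[Theorem~5.1]{Trotter} and \cite[Theorem~2.13]{Kurtz}, so your sketch aligns with those references. Your final Poisson comparison between $T_n^{\lfloor nt\rfloor}$ and $\e^{t\mathcal{A}_n}$ is clean and correct; no diagonal selection is needed there, only the density of $\Dom(\mathcal{A})$ and the uniform contraction bounds.
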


On the other hand, 
Proposition \ref{Prop:Trotter} does not imply 
a quantitative estimates of \eqref{limit-Trotter}. 
Campiti and Tacelli established in \cite{CT} 
a refinement of Proposition \ref{Prop:Trotter} by giving the rate of convergence 
of \eqref{limit-Trotter}, under the condition $\mathcal{U}_n \equiv \mathcal{U}$. 
Afterwards, one of the authors of the present paper  also established in \cite{Namba} for the convergence rate corresponding to Trotter's approximation theorem 
in more general settings, whose statement is as follows:

\begin{pr}[cf.~{\cite[Theorem 1]{Namba}}]
\label{Prop:Trotter-rate}
Suppose that  $ \{T_n\}_{n=1}^\infty $, 
$ \{\mathcal{A}_n\}_{n=1}^\infty $, $\mathcal{A}$ and $ (\mathsf{T}_t)_{t \ge 0} $
are as in Proposition {\rm \ref{Prop:Trotter}}.
Let $ \mathcal{D} $ be a dense subspace of $ \mathcal{U} $. 
We assume that
    \[
    \|\mathcal{A}_nP_n f\|_{\mathcal{U}_n} \le \varphi_n(f), \qquad 
    \|\mathcal{A}_nP_n f - P_n\mathcal{A}f\|_{\mathcal{U}_n} \le \psi_n(f), 
    \qquad f \in \mathcal{D},
    \]
where $ \varphi_n, \psi_n : \mathcal{D} \to [0, \infty) $
are semi-norms with $ \psi_n(f) \to 0 $ as $ n \to \infty $
for $ f \in \mathcal{D} $.
Then, for  $n \in \N$, $ t \ge 0 $ and 
$ f \in \{g \in \mathcal{D} \, 
| \, \mathsf{T}_t g \in \mathcal{D}, \, t \ge 0\} $, we have 
    \begin{equation}\label{limit-Trotter2}
    \|T_n^{\lfloor nt \rfloor}P_n f - P_n\mathsf{T}_tf\|_{\mathcal{U}_n}
    =\sqrt{\frac{t}{n}}\varphi_n(f)+\frac{1}{n}\varphi_n(f)
    +\int_0^t \psi_n(\mathsf{T}_sf) \, \dd s.
    \end{equation}
\end{pr}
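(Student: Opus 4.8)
The plan is to interpolate between the discrete iterate $T_n^{\lfloor nt\rfloor}$ and the limiting semigroup $\mathsf{T}_t$ by inserting the \emph{Poissonised} operator $\e^{m(T_n-I)}$, where $m:=\lfloor nt\rfloor$. Two observations drive the argument: first, $\e^{m(T_n-I)}=\e^{(m/n)\mathcal{A}_n}$ is the time-$m/n$ value of the uniformly continuous contraction semigroup $(\e^{s\mathcal{A}_n})_{s\ge 0}$ on $\mathcal{U}_n$, the contractivity following from $\e^{s\mathcal{A}_n}=\e^{-sn}\e^{snT_n}$ together with $\|T_n\|\le 1$; second, $\e^{m(T_n-I)}P_nf=\sum_{j\ge 0}\e^{-m}\tfrac{m^j}{j!}\,T_n^jP_nf$ is the $\mathrm{Poisson}(m)$-average of the iterates $T_n^jP_nf$. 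I would then decompose
\begin{align*}
T_n^{m}P_nf-P_n\mathsf{T}_tf
&=\underbrace{\big(T_n^{m}-\e^{m(T_n-I)}\big)P_nf}_{=:R_n^{(1)}}
+\underbrace{\big(\e^{m(T_n-I)}P_nf-P_n\mathsf{T}_{m/n}f\big)}_{=:R_n^{(2)}}\\
&\qquad+\underbrace{P_n\big(\mathsf{T}_{m/n}-\mathsf{T}_t\big)f}_{=:R_n^{(3)}},
\end{align*}
and estimate the three pieces separately; they will produce exactly the $\sqrt{t/n}\,\varphi_n(f)$, the $\int_0^t\psi_n(\mathsf{T}_sf)\,\dd s$ and the $\tfrac1n\varphi_n(f)$ contributions of the asserted bound.

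For $R_n^{(1)}$ the key elementary estimate is $\|(T_n^{j}-T_n^{m})P_nf\|_{\mathcal{U}_n}\le\tfrac{|j-m|}{n}\|\mathcal{A}_nP_nf\|_{\mathcal{U}_n}\le\tfrac{|j-m|}{n}\varphi_n(f)$, valid for every $j\ge 0$, since $T_n^{j}-T_n^{m}=\pm\tfrac1n T_n^{\min(j,m)}\sum_{l=0}^{|j-m|-1}T_n^{l}\mathcal{A}_n$ and $\|T_n\|\le 1$. Averaging this inequality against the Poisson weights and invoking $\mathbb{E}|N-m|\le(\Var N)^{1/2}=\sqrt m$ for $N\sim\mathrm{Poisson}(m)$, together with $m=\lfloor nt\rfloor\le nt$, yields $\|R_n^{(1)}\|_{\mathcal{U}_n}\le\tfrac{\sqrt m}{n}\varphi_n(f)\le\sqrt{t/n}\,\varphi_n(f)$. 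It is precisely this Poissonisation, exploiting that a $\mathrm{Poisson}(m)$ variable concentrates within $\sqrt m$ of its mean, that manufactures the square-root rate.

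The term $R_n^{(2)}$ is a genuine comparison of two $C_0$-contraction semigroups, handled by the variation-of-constants formula. Since $f$ lies in $\{g\in\mathcal{D}:\mathsf{T}_sg\in\mathcal{D}\ \forall s\ge 0\}$ and the hypotheses force $\mathcal{D}\subseteq\Dom(\mathcal{A})$, the orbit $s\mapsto\mathsf{T}_sf$ is continuously differentiable with $\tfrac{\dd}{\dd s}\mathsf{T}_sf=\mathcal{A}\mathsf{T}_sf$; differentiating the $\mathcal{U}_n$-valued curve $u\mapsto\e^{(m/n-u)\mathcal{A}_n}P_n\mathsf{T}_uf$ on $[0,m/n]$ and integrating gives
\[
P_n\mathsf{T}_{m/n}f-\e^{m(T_n-I)}P_nf
=-\int_0^{m/n}\e^{(m/n-u)\mathcal{A}_n}\big(\mathcal{A}_nP_n-P_n\mathcal{A}\big)\mathsf{T}_uf\,\dd u.
\]
Bounding the integrand with $\|\e^{s\mathcal{A}_n}\|\le 1$ and $\|(\mathcal{A}_nP_n-P_n\mathcal{A})\mathsf{T}_uf\|_{\mathcal{U}_n}\le\psi_n(\mathsf{T}_uf)$ (legitimate because $\mathsf{T}_uf\in\mathcal{D}$), and using $m/n\le t$, gives $\|R_n^{(2)}\|_{\mathcal{U}_n}\le\int_0^{m/n}\psi_n(\mathsf{T}_uf)\,\dd u\le\int_0^t\psi_n(\mathsf{T}_sf)\,\dd s$. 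Finally $\|R_n^{(3)}\|_{\mathcal{U}_n}\le\|\mathsf{T}_{m/n}f-\mathsf{T}_tf\|_{\mathcal{U}}=\big\|\int_{m/n}^t\mathsf{T}_s\mathcal{A}f\,\dd s\big\|_{\mathcal{U}}\le(t-m/n)\|\mathcal{A}f\|_{\mathcal{U}}<\tfrac1n\|\mathcal{A}f\|_{\mathcal{U}}$, and $\|\mathcal{A}f\|_{\mathcal{U}}=\lim_n\|\mathcal{A}_nP_nf\|_{\mathcal{U}_n}\le\varphi_n(f)$ — this uses $\|P_n\mathcal{A}f\|_{\mathcal{U}_n}\to\|\mathcal{A}f\|_{\mathcal{U}}$, $\psi_n(f)\to 0$, and the standing inequality $\|\mathcal{A}_nP_nf\|\le\varphi_n(f)$, which in each application of the proposition is arranged so that $\varphi_n(f)\ge\|\mathcal{A}f\|$ for all $n$. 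Adding the three estimates gives the claimed bound (read, of course, with $\le$ in place of $=$).

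The step I expect to be the main obstacle is the bookkeeping for $R_n^{(2)}$: one must check that $u\mapsto\e^{(m/n-u)\mathcal{A}_n}P_n\mathsf{T}_uf$ is genuinely differentiable as a $\mathcal{U}_n$-valued map — which follows from the boundedness of $\mathcal{A}_n$ and of $P_n$ together with the strong continuity of $(\mathsf{T}_s)$ — and, crucially, that $\mathsf{T}_uf$ stays in $\mathcal{D}$ throughout $[0,m/n]$ so that the seminorm bound $\psi_n(\mathsf{T}_uf)$ applies; this is exactly why the hypothesis on $f$ is phrased as $\mathsf{T}_tg\in\mathcal{D}$ for every $t\ge 0$ rather than merely $f\in\mathcal{D}$. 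By contrast, once the telescoping inequality and the Poisson $L^1$-deviation bound $\mathbb{E}|N-m|\le\sqrt m$ are in hand, the treatment of $R_n^{(1)}$ and $R_n^{(3)}$ is routine.
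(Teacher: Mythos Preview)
The paper does not prove this proposition; it is quoted in the appendix from \cite{Namba} (building on \cite{CT}). Your Poissonisation-plus-variation-of-constants argument is exactly the standard route to such Trotter-type rates and is essentially correct, including your reading of ``$=$'' as the obvious typo for ``$\le$''. The decomposition into $R_n^{(1)},R_n^{(2)},R_n^{(3)}$, the telescoping bound $\|(T_n^j-T_n^m)P_nf\|\le\tfrac{|j-m|}{n}\varphi_n(f)$ combined with $\mathbb{E}|N-m|\le\sqrt m$ for $N\sim\mathrm{Poisson}(m)$, and the Duhamel identity for $R_n^{(2)}$ are all sound.

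The only genuine soft spot is your treatment of $R_n^{(3)}$: from $\|\mathcal{A}_nP_nf\|_{\mathcal{U}_n}\le\varphi_n(f)$ and $\|\mathcal{A}_nP_nf\|_{\mathcal{U}_n}\to\|\mathcal{A}f\|_{\mathcal{U}}$ you cannot conclude $\|\mathcal{A}f\|_{\mathcal{U}}\le\varphi_n(f)$ for \emph{every} $n$, only that the inequality holds in the limit. You acknowledge this and note that in every application in the paper one takes $\varphi_n(f)=\psi_n(f)+\|\mathcal{A}f\|$, so the inequality is trivially satisfied; that is a fair reading of how the proposition is actually used (see Step~3 in the proof of Theorem~\ref{Thm:Bernstein-1}). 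As a proof of the proposition in full generality, however, you would either need to add $\varphi_n(f)\ge\|\mathcal{A}f\|_{\mathcal{U}}$ as a standing hypothesis, or replace the $\tfrac{1}{n}\varphi_n(f)$ term by $\tfrac{1}{n}\|\mathcal{A}f\|_{\mathcal{U}}$ in the conclusion. Either fix is harmless for the applications at hand.
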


\section{{\bf Central moments for the
binomial distribution}}
\label{App:moments}

Although the binomial distribution is a 
representative of typical discrete probability distributions, 
there have been only a few papers
in which their exact raw and central moment formulae 
are exhibited. 
In fact, some recursive formulae for such moments 
have been obtained in some papers. 
However, it is quite difficult to
write down their general forms explicitly 
since some combinatorial numbers 
such as the Stirling number of the second kind
appear therein. 
This appendix presents some estimates of
central moments of even order 
for the binomial distribution. 
This estimate plays  significant roles 
in the proofs of Theorems \ref{Thm:Bernstein-2}
and \ref{Thm:Bernstein-general-2}. 

Suppose that a random variable $S$  follows 
the binomial distribution with parameters $n \in \N$
and $x \in [0, 1]$, 
that is, 
    \[
    \begin{aligned}
    \mathbb{P}(S=k)&=\binom{n}{k}p^k(1-p)^{n-k},
    \qquad k=0, 1, 2, \dots, n. \\
    \end{aligned}
    \]
Then, it is easy to see that $\E[S]=nx$. 
The things we want to know are some upper estimates of 
the central moment $\E[(S-nx)^\gamma]$
for $\gamma \in \N$. 
Skorski \cite{Skorski} obtained the following 
behavior of the central moment 
of $S$ of even order. 

\begin{pr}[cf.~{\cite[Theorem 3]{Skorski}}]
\label{prop:Skorski-moment}
Let $\gamma=2\beta \ge 4$ be an even integer
and put $\sigma^2:=x(1-x)$. 
Then, we have 
    \[
    \E[(S-nx)^{2\beta}]^{1/2\beta}
    =\Theta(1) \cdot \max_{k=2, 3, \dots, \beta} 
    k^{1-k/2\beta} \cdot (n\sigma^2)^{k/2\beta}, 
    \]
where $f(N)=\Theta(g(N))$ means that 
\[
c_1g(N) \le f(N) \le c_2g(N), \qquad 
N \ge N_0,
\]
holds for 
some $c_1, c_2>0$ and some $N_0 \in \N$. 
\end{pr}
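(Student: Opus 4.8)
The plan is to establish Proposition~\ref{prop:Skorski-moment} by combining a moment upper bound that reduces the even central moment to a sum over set partitions with a matching lower bound obtained by isolating the dominant partition type. First I would recall the classical formula expressing $\E[(S-nx)^{2\beta}]$ in terms of the cumulants of $S$. Since $S = X_1 + \cdots + X_n$ is a sum of i.i.d.\ Bernoulli$(x)$ random variables, all cumulants are additive, so the $k$-th cumulant of $S$ equals $n$ times the $k$-th cumulant $\kappa_k$ of a single Bernoulli$(x)$ variable, and $|\kappa_k| \le C_k \sigma^2$ for $k \ge 2$ with $\kappa_2 = \sigma^2$ exactly (here $\sigma^2 = x(1-x)$). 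Then by the moment--cumulant relation,
\[
\E[(S-nx)^{2\beta}] = \sum_{\pi} \prod_{B \in \pi} \kappa_{|B|}(S),
\]
where the sum runs over all partitions $\pi$ of $\{1,\dots,2\beta\}$ into blocks of size $\ge 2$ (blocks of size $1$ vanish because we centered). Each such partition with $r$ blocks of sizes $k_1,\dots,k_r$ (all $\ge 2$, summing to $2\beta$) contributes a term of order $n^r \sigma^{2r}$ up to constants.

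Next I would analyze which partition type dominates as $n \to \infty$. A partition into $r$ blocks of size $\ge 2$ satisfies $r \le \beta$, with $r = \beta$ achieved only by the all-pairs partition, contributing $\Theta((n\sigma^2)^\beta)$. But when $n\sigma^2$ is small this term need not dominate; in general the dominant contribution comes from maximizing $(n\sigma^2)^r$ subject to the constraint that $r$ blocks of size $\ge 2$ can be arranged, i.e.\ the dominant term is of order $\max_{2 \le k \le \beta} (n\sigma^2)^{\lfloor 2\beta/k\rfloor}$ roughly — and taking the $2\beta$-th root this should be reorganized into the stated form $\max_{k=2,\dots,\beta} k^{1-k/2\beta}(n\sigma^2)^{k/2\beta}$. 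The upper bound follows by bounding the number of partitions of each type by a constant depending only on $\beta$ and summing; the lower bound follows by keeping only the single term realizing the maximum, using positivity of $\kappa_2 = \sigma^2$ and controlling the sign/size of the higher cumulants (one may need to group blocks of odd size in pairs so that their product of cumulants is nonnegative, or simply note $n$ is large enough that the leading term swamps lower-order corrections). The constants $c_1, c_2$ and threshold $N_0$ are extracted from these finite combinatorial bounds.

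The main obstacle I expect is the bookkeeping that identifies the correct maximizing index: translating ``maximize $(n\sigma^2)^r$ over admissible block structures'' into the clean expression $\max_{k} k^{1-k/2\beta}(n\sigma^2)^{k/2\beta}$ requires matching the exponent of $n\sigma^2$ with a combinatorial count and showing no intermediate partition type can beat both the all-pairs partition and the few-large-blocks partitions. A secondary technical point is ensuring the lower bound is genuinely $\Theta(1)$ and not merely $\Omega(1)$ up to a $\beta$-dependent constant that degenerates — this is handled by noting $\beta$ is fixed and all constants may depend on $\beta$. Since the present paper only needs the crude consequence $\E[(S-nx)^{2\beta}] \le C n^\beta$ (used via Theorem~\ref{Thm:binomial-moment} in the proofs of Theorems~\ref{Thm:Bernstein-2} and \ref{Thm:Bernstein-general-2}), for our purposes it would in fact suffice to extract only the upper bound, which follows directly from $|\kappa_k(S)| \le C_k n$ and the moment--cumulant formula with $r \le \beta$; the sharp $\Theta$-asymptotics is the content of \cite{Skorski} and I would simply cite it.
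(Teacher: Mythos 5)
The paper gives no proof of this proposition: it is imported verbatim from Skorski's Theorem 3, and the only thing actually used downstream is the crude consequence $\E[(S-nx)^{2\beta}]\le Cn^{\beta}$ established in Theorem \ref{Thm:binomial-moment}. Your decision to cite \cite{Skorski} for the sharp $\Theta$-asymptotics and to extract only the upper bound for the paper's purposes is therefore exactly what the authors do, and your cumulant argument for the upper bound is sound (all Bernoulli cumulants of order $\ge 2$ are divisible by $x(1-x)$, so each partition into $r\le\beta$ non-singleton blocks contributes $O((n\sigma^2)^r)=O(n^\beta)$ once $n\sigma^2\le n/4$).

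One caveat on your sketch of the sharp statement, which you partly flagged yourself: the index $k$ in $\max_{k=2,\dots,\beta}k^{1-k/2\beta}(n\sigma^2)^{k/2\beta}$ parametrizes the \emph{number of blocks} of the partition, not the block size. The term with $k$ blocks (each of size $\ge 2$) contributes roughly $k^{2\beta-k}(n\sigma^2)^{k}$ to the $2\beta$-th moment --- the combinatorial factor $k^{2\beta-k}$ coming from the count of such partitions --- and taking the $2\beta$-th root yields precisely $k^{1-k/2\beta}(n\sigma^2)^{k/2\beta}$. Your tentative reading ``maximize $(n\sigma^2)^{\lfloor 2\beta/k\rfloor}$ over block sizes $k$'' is the wrong parametrization and would not reproduce the stated expression. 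The other genuinely delicate point is the lower bound: the odd Bernoulli cumulants change sign at $x=1/2$, so the terms in the moment--cumulant sum are not all nonnegative, and ``the leading term swamps the rest'' requires a real argument (this is the substance of Skorski's proof). Since you defer to the citation for the $\Theta$-statement, neither issue affects the correctness of your proposal for what the paper actually needs.
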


By making use of this proposition, 
we establish the following upper bound. 

\begin{tm}
\label{Thm:binomial-moment}
Let $\gamma=2\beta$ be an even positive integer. 
Then, we have 
    \begin{equation*}\label{C-1}
    \E[(S-nx)^{2\beta}]
    \le Cn^\beta, \qquad n \in\N,
    \end{equation*}
    for some $C>0$ independent of $n$. 
\end{tm}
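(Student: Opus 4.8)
The plan is to deduce the desired bound directly from Proposition \ref{prop:Skorski-moment}, whose asymptotic formula already isolates the exact order of growth of the even central moments of the binomial distribution. First I would write $\sigma^2 = x(1-x) \le 1/4$, so that $n\sigma^2 \le n/4 \le n$ for all $n \in \N$. The content of Proposition \ref{prop:Skorski-moment} is that
\[
\E[(S-nx)^{2\beta}]^{1/2\beta} = \Theta(1) \cdot \max_{k=2, 3, \dots, \beta} k^{1-k/2\beta} \cdot (n\sigma^2)^{k/2\beta},
\]
and the $\Theta(1)$ constant (call it $c_2$, the upper one) together with the threshold $N_0$ depends only on $\beta$, not on $n$ or $x$. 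So for $n \ge N_0$ we have
\[
\E[(S-nx)^{2\beta}]^{1/2\beta} \le c_2 \max_{k=2, 3, \dots, \beta} k^{1-k/2\beta} \cdot (n\sigma^2)^{k/2\beta} \le c_2 \beta \cdot \max_{k=2, 3, \dots, \beta}(n/4)^{k/2\beta}.
\]

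Next I would observe that the exponent $k/2\beta$ ranges over $[1/\beta, 1/2]$ as $k$ runs from $2$ to $\beta$, so $(n/4)^{k/2\beta} \le (n/4)^{1/2} \le n^{1/2}$ whenever $n \ge 1$ (the maximum over $k$ is attained at $k=\beta$ since $n/4 \ge 1$ for $n \ge 4$; for small $n$ one absorbs everything into the constant anyway). Hence
\[
\E[(S-nx)^{2\beta}]^{1/2\beta} \le c_2 \beta \, n^{1/2}, \qquad n \ge N_0 \vee 4,
\]
and raising both sides to the power $2\beta$ gives $\E[(S-nx)^{2\beta}] \le (c_2\beta)^{2\beta} n^{\beta}$ for all sufficiently large $n$. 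For the finitely many remaining values $n < N_0 \vee 4$, the moment $\E[(S-nx)^{2\beta}]$ is a continuous function of $x \in [0,1]$ on a compact set, hence bounded, so it is trivially $\le C n^\beta$ after enlarging $C$. Combining the two regimes yields the claimed estimate with a constant $C$ depending only on $\beta$.

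I do not expect any serious obstacle here; the theorem is essentially a repackaging of Skorski's sharp result into the crude form actually needed in the proofs of Theorems \ref{Thm:Bernstein-2} and \ref{Thm:Bernstein-general-2}. The only point requiring a little care is making sure the implied constants in the $\Theta$-notation are genuinely uniform in $x \in [0,1]$ (not just for fixed $x$); this is why I separate out the bound $\sigma^2 \le 1/4$ early and handle the small-$n$ range by a compactness argument rather than by the asymptotic formula. An alternative, entirely self-contained route — avoiding Proposition \ref{prop:Skorski-moment} altogether — would be to expand $\E[(S-nx)^{2\beta}]$ as a polynomial in $n$ whose coefficients involve Stirling numbers of the second kind, and to check that the coefficient of $n^m$ vanishes for $m > \beta$ (since $S - nx$ is a centered sum of $n$ i.i.d.\ bounded terms, the $2\beta$-th central moment grows like $n^\beta$ by the standard independence/combinatorics argument). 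I would mention this as a remark but carry out the short proof via Skorski's theorem as above.
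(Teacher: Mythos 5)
Your proposal is correct and follows essentially the same route as the paper's own proof: bound $\sigma^2=x(1-x)\le 1/4$, feed this into Proposition \ref{prop:Skorski-moment} to get $\E[(S-nx)^{2\beta}]^{1/2\beta}\le C n^{1/2}$, and handle $\beta=1$ via the variance. The only difference is that you explicitly dispose of the finitely many $n<N_0$ by compactness, a point the paper's proof glosses over; this is a minor but legitimate improvement in rigor rather than a different approach.
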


\begin{proof}
If $\gamma=2\beta=2$, the central moment is 
nothing but the variance of $S$ and 
it is obvious that $\mathrm{Var}(S)=nx(1-x) \le n/4$. 
Suppose that $\gamma=2\beta \ge 4$. 
Then, there is a sufficiently 
large $C>0$ depending only on $\beta$ such that
    \[
    \max_{k=2, 3, \dots, \beta}
    k^{1-k/2\beta}(\sigma^2)^{k/2\beta}\le C,
    \]
since it holds that $\sigma^2=xy=x(1-x) \le 1/4$ uniformly in $x \in [0, 1]$.
Therefore, we obtain
    \[
    \E[(S-nx)^{2\beta}]^{1/2\beta}
    \le C \cdot \max_{k=2, 3, \dots, \beta}n^{k/2\beta}=Cn^{1/2}
    \]
as soon as we choose $C$ as being sufficiently large. 
\end{proof}

\end{appendix}


\end{document}